\theoremstyle{thmstyleone}%
\newtheorem{theorem}{Theorem}
\newtheorem{proposition}[theorem]{Proposition}%
\newtheorem{lemma}[theorem]{Lemma}
\theoremstyle{thmstyletwo}%
\newtheorem{example}{Example}%
\newtheorem{remark}{Remark}%
\theoremstyle{thmstylethree}%
\newtheorem{assumption}{Assumption}%
\journal{***}
\begin{document}

\begin{frontmatter}



\title{Admission Control for A Single Server Waiting Time Process in Heavy Traffic}


\author[inst1]{Bowen Xie$^{*}$}

\affiliation[inst1]{organization={Department of Mathematics, College of Engineering and Polymer Science, The University of Akron},
            city={Akron},
            postcode={44325-4002}, 
            state={Ohio},
            country={US}}

\author[inst2]{Haoyu Yin}

\affiliation[inst2]{organization={Department of Electrical and Systems Engineering, Washington University in St. Louis},
            city={St. Louis},
            postcode={63130}, 
            state={Missouri},
            country={US}}

\begin{abstract}
We address a single server queue control problem (QCP) in heavy traffic originating from Lee and Weerasinghe (2011). The state process represents the offered waiting time, the customer arrival has a state-dependent intensity, and the customers' service and patience times are i.i.d with general distributions. We introduce an infinite-horizon discounted cost functional consisting of a control cost generated from the use of control and a penalty for idleness cost. Our primary goal is to tackle the QCP, taking into account a non-trivial control cost and a non-increasing cost function resulting from the control mechanisms in the waiting time. Under mild assumptions, the heavy traffic limit of the QCP yields a stochastic control problem described by a diffusion process, which we call a diffusion control problem (DCP). We find the optimal control of the associated DCP by incorporating the Legendre-Fenchel transform and a formal Hamilton-Jacobi-Bellman (HJB) equation. Then, we ``translate'' this optimal strategy to the QCP, of which we obtain an asymptotically optimal policy. Apart from theoretical results, we also examine the REINFORCE algorithm, a Reinforcement learning (RL) approach, for solving stochastic controls motivated by recent literature. We highlight the advantages and limitations of simulation from theoretical results and data-driven algorithms. 


\end{abstract}






\begin{keyword}
Admission control \sep waiting time process \sep 
state-dependent intensity \sep
Legendre-Fenchel transform \sep Hamilton-Jacobi-Bellman equation

\MSC[2020] primary 60K25 \sep 90B22 \sep 93E20 \sep secondary 90B18\sep 93B70 
\end{keyword}

\end{frontmatter}

\section{Introduction}
\label{sec:sample1}

In this paper, we study a single server waiting time control problem that originated from a heavy traffic approximation for a sequence of single server queueing models with impatient customers, where the state process demonstrates the offered waiting time. 
In a single server queueing system, impatient customers arrive randomly over time and are served according to first-come-first-served (FCFS) discipline. The customers abandon the system if their patience runs out before the service initiates. 
Additionally, the service and patience times are independent, with general distributions subject to mild constraints. 
Here, admission control is established using a state-dependent arrival rate. 
By adjusting the controls through tuning the arrival rates, the system manager attempts to minimize the cost characterized by specific cost structures. 
Our objective is to solve the waiting time control of the single server queueing model originating from \cite{lee2011convergence} through its associated diffusion control problem in heavy traffic. 
Moreover, we intend to investigate the benefits and drawbacks of implementing the theoretical results numerically and the potential Reinforcement learning (RL) algorithms in seeking an optimal admission policy for the single server waiting time process.

\subsection{Motivation}

In practice, such as in telephone communication centers, internet bandwidth sharing models, or cloud computing systems, customers/tasks may not observe the queue length, and internet traffic may not display background tasks to individual users, but the waiting time (or processing time) is often available. 
For instance, a cloud uploading/downloading system recognizes the file sizes, which leads to observable service times for all the tasks in the queue, and these quantities provide an explicit definition of the waiting time. 
In this model, the state process is the offered waiting time, which provides the amount of time a hypothetical customer would have to wait for service if he had arrived with infinite patience, and the customer arrival process has a state-dependent intensity. 
As a motivation of the state-dependent intensity in \cite{lee2011convergence}, one can think of this process in the following way. Each customer is equipped with an associated task with a deadline (or observable workload, for instance, \cite{foss2001optimality}, \cite{der2022scalable}) when they arrive at the system, and the system manager would have to learn about the deadline as well as the required completion/service time for each customer. 
These quantities can further render the offered waiting times. 
Thus, the manager can influence the arrival intensity by means of admission control. 
Here, we intend to find an optimal admission control that generates the minimum cost under some cost structure.

To formulate a single server waiting time control problem (QCP), we consider a sequence of $n$ single server queueing systems and employ its heavy traffic approximation obtained in \cite{lee2011convergence}. 
In the $n$th system, where $n\in\mathbb{N}$, the arrival process depends on the offered waiting time, and its intensity is of order $O(n)$ for large $n$, and the service times are i.i.d with a general distribution where the mean is of order $O(1/n)$ for large $n$. Customers' patience times are i.i.d distributed, and this distribution may depend on $n$. 
When $n$ gets large without bound, the arrival rate of the $n$th system becomes large, and the service time of the $n$th system becomes small 
so that it balances the arrival rate to obtain heavy traffic conditions. 
Under some mild assumptions, \cite{lee2011convergence} established the heavy traffic approximation for the diffusion-scaled offered waiting time process, characterized by a stochastic differential equation. 
However, they did not formulate a potential control problem. 
Here, we employ their weak convergence result of the diffusion-scaled offered waiting time (see Proposition \ref{diffusion limit}) and establish an associated QCP in \eqref{cost functional of QCP} below, where we consider two types of costs: a control cost related to the use of control and a penalty for idleness cost. 
The heavy traffic limit of the QCP further yields a diffusion control problem (DCP) in \eqref{costfunctional}.
To tackle the control problem, we provide a comprehensive illustration of the ``BIGSTEP" method in the setting of single server queues through the waiting time process that utilizes a class of discrete-view control policies to exhibit a potential means of mechanically constructing near-optimal control in heavy traffic (cf. \cite{harrison1996bigstep}, \cite{harrison1998heavy}, \cite{bell2001dynamic}). 
We intend to demonstrate that an asymptotically optimal policy of the discrete-view QCP in heavy traffic is constructed through the optimality of DCP, which is typically much simpler than the original queueing system. 
Further, with the help of the Legendre-Fenchel transform, we find a smooth solution to a Hamilton-Jacobi-Bellman (HJB) equation associated with the DCP, where we solve an associated free-boundary problem by analyzing a system of parameterized non-linear ordinary differential equations. 
This, in turn, renders an optimal strategy, which turns out to be a feedback control for the DCP. 
We then demonstrate that this optimal strategy is nearly optimal for the QCP under the heavy traffic approximation so that we can obtain an asymptotically optimal solution to the QCP. 

\subsection{Contributions}

We summarize the novel aspects of our work as follows:
First, it is worth mentioning that the use of control and its related cost function are non-trivial. 
Here, admission control takes place within the waiting time process of the single server queueing model, where the state process represents the offered waiting time. 
To simplify the exposition, we may consider a unit service rate in a queueing model, and we have a stable system if the arrival rate $\lambda < 1$. Otherwise, the queue length in the system may blow up. One can consider $1 - \lambda$ as the ``control", which is denoted by $u = 1-\lambda$. When $u$ is large, it is natural to have a low cost for the use of control since $\lambda$ is small, and we have a short queue length to reduce congestion, which generates short waiting times. 
Meanwhile, the penalty cost increases due to a large idleness.
By contrast, when $u$ is small, the arrival rate $\lambda$ is close to the unit service rate so that the control lies in a high-cost region due to a congested queue length and large waiting times, which further yields a reduced penalty of the idle server. 
These facts suggest a non-increasing cost function associated with the use of control, and the trade-off between these performances leads to a control problem in a single server queue. 
Similar effects appear in both the discrete-view state process and diffusion process. 
The control cost and idle server cost mechanisms are typical in the literature since those are two significant factors of a system (cf. \cite{atar2005scheduling}, \cite{weerasinghe2015optimal}). 
We will carefully exhibit our assumptions on arrival intensities, service times, patience-time distributions, and especially the use of control in Section \ref{Subsec2 of Sec1 in CH4: Sequence of Queueing Systems}.

Second, we propose a non-trivial mechanism of cost functions associated with the use of controls, which may complicate the application of the Legendre-Fenchel transform and the analysis of the formal HJB equation.
The assumptions of the cost function are different from those conventionally used in the literature. 
To ensure the completeness and consistency of the assumptions in \cite{lee2011convergence}, we do not restrict the arrival rate $\lambda < 1$, which further indicates that the candidates for control $u = 1 - \lambda$ could take negative values and yields positive forces to the state process. 
In conjunction with the structure of cost functions suggested previously, we assume the cost function $C(x)$ admits polynomial decaying property when $x \leq 0$, and when $x>0$, it admits some mild decaying assumptions (see Assumption \ref{assumption4} (i) below). 
To the best of our knowledge, our work differs significantly from existing queue optimization problems, particularly in terms of the control mechanisms applied to the waiting time process and the selection of cost functions.
Our problem is a challenging problem due to the abnormal control mechanism, the non-polynomial growth of the associated cost function, and the adjustment of the drift coefficient (acting as a control), which is influenced by changes in the arrival rate within the queueing model.
This structure also has an impact on the convergence of cost functional in translating asymptotically optimal policy for the QCP.

Third, motivated by recent literature, we apply Reinforcement learning (RL) algorithms to present an alternative approach for solving the single-server queue control problem.
Such a move has drawn significant attention in recent years; for instance, see \cite{walton2021learning}, \cite{dai2022queueing}, \cite{jia2024online}. 
We aim to assess its performance and applicability within our specific model. To facilitate comparison and as a benchmark, we perform numerical simulations using illustrative examples.
Based on the analysis of the formal HJB, we simulate a parameterized differential equation deduced from the HJB. 
However, implementing the solution to the HJB presents some challenges, as finding the optimal parameter in the parameterized equation through exhaustive search is computationally intensive.
To explore alternative non-conventional approaches, we apply the REINFORCE algorithm to solve stochastic control problems, provided the DCP can be formulated as a Markov decision process (MDP).
It is natural to explore new approaches to overcome the challenges posed by complex stochastic controls. However, the performance and parameter-tuning strategies of these methods remain uncertain and require further investigation.

\subsection{Literature Review}

The admission control problems of stochastic systems have a long history, beginning with the work of \cite{bather1966continuous}. 
These problems are widely established in various areas, for example, mathematical finance, management science, and in particular, controls in queueing, inventory control, and dynamic pricing, where state-dependent controls arise.

In mathematical finance, target zones for exchange rates and identifying central bank interventions in the foreign exchange markets may lead to stochastic control problems 
(cf. \cite{krugman1991target}, \cite{bertola1992target}, 
\cite{miller1996optimal}, \cite{cadenillas1999optimal}).  
In buffer-length control problems of queueing systems or production-inventory models under heavy traffic, the trade-off between the cost of rejected customers due to a full buffer and the cost of abandoning customers due to long waiting times in the queue leads to cost minimization problems 
(cf. \cite{koccauga2010admission}, \cite{weerasinghe2013abandonment}, \cite{weerasinghe2016optimal}, 
\cite{biswas2017ergodic}, 
\cite{arapostathis2018infinite}, 
\cite{yang2020optimality}, \cite{xie2024long}). 

In the literature on queueing systems in heavy traffic, there are numerous results that address the system optimizations for both single and many server heavy-traffic regimes (cf.
\cite{bell2001dynamic}, \cite{george2001dynamic},  \cite{atar2004scheduling}, \cite{ata2005heavy}). 
\cite{bell2001dynamic} concerned a dynamic scheduling problem for a queueing system that has two streams of arrivals to infinite capacity buffers and two (nonidentical) servers working in parallel and established asymptotic optimality of a threshold policy. 
\cite{george2001dynamic} considered a single-server queue with Poisson arrivals, where holding costs are continuously incurred as a nondecreasing function of the queue length, which evolves as a birth-and-death process with constant arrival rate $\lambda = 1$ and with state-dependent service rates $\mu_n$. 
\cite{atar2004scheduling} considered the problem of scheduling a queueing system in which many statistically identical servers cater to several classes of impatient customers and address an expected cumulative discounted optimization question. 
\cite{ata2005heavy} considered a class of open stochastic processing networks, with feedback routing and overlapping server capabilities, in heavy traffic and proposed a simple discrete review policy for controlling such networks. 
\cite{arapostathis2019optimal} studied multiclass many-server queues for which the arrival, service, and abandonment rates are all modulated by a common finite-state Markov process, where they addressed the infinite-horizon discounted and long-run average (ergodic) optimal control problems and established asymptotic optimality.
There is a substantial body of work on control problems in queueing models, including studies by \cite{ward2008asymptotically}, \cite{ghosh2010optimal}, \cite{weerasinghe2013abandonment}, and \cite{weerasinghe2016optimal}.

Additionally, in the literature on inventory controls, \cite{gross1971one} proposed several one-for-one-ordering inventory models in which the time required for order replenishment, or lead time, depends on the number of orders outstanding. 
\cite{ozer2004inventory} addressed a periodic-review, stochastic, capacitated, finite, and infinite horizon production system faced by a manufacturer who has the ability to obtain advance demand information. They established optimal state-dependent policies and characterized their behavior with respect to capacity, fixed costs, advance demand information, and the planning horizon. 
\cite{kutzner2013optimal} studied a single-stage, periodic-review inventory problem for a single item with stochastic demand, where a state-dependent random yield occurs such that the inventory manager determines order sizes according to an order-up-to logic and observes a random yield due to quality problems in the production. 
There are also various state-dependent control problems that emerge in dynamic pricing control in economics and management science.
\cite{dotsey2005implications} investigated the implications of State-dependent pricing (SDP) for topics in two major areas of macroeconomic research, the early 1990s SDP literature and more recent work on persistence mechanisms, where they showed that state-dependent pricing leads to unusual macroeconomic dynamics. 
\cite{dube2008category} considered the category pricing problem with state-dependent utility and demonstrated that the presence of loyalty materially affects optimal pricing. 
\cite{dong2011state} considered a state-dependent pricing problem for real-time freeway management, proposed the notion of anticipatory (dynamic) pricing, and investigated the advantages of using predicted traffic conditions.

However, our single-server queue waiting time control problem differs in terms of arrival intensity and the way its associated admission control is applied, particularly due to the inclusion and structure of the control cost. 
Instead of modeling the queue length process conventionally, we focus on the state process that represents the offered waiting time. 
Since the intensity of the arrival process is non-constant and may depend on the current value of the offered waiting time, the system manager may experience adjustments of order $O(\sqrt{n})$ to the arrival rate, and this may affect the drift coefficient of the heavy traffic limiting diffusion process, to which the drift term acts as a control. 
This results in a waiting time control problem, where admission control is effectively represented by the arrival intensity. 
As a result of this change in model construction, the control cost function exhibits a complex structure.
This further has an impact on the convergence of cost functional under heavy traffic approximation. 
In controlled queueing networks, such adjustments are known as ``thin control" (cf. \cite{ata2006dynamic}, \cite{ward2008asymptotically}, \cite{ghosh2010optimal}). 


The stochastic control problem discussed in this paper can be extended to queue control problems under a variety of different patience-time distribution assumptions. 
In Markovian settings (cf. \cite{ward2003diffusion}, \cite{ward2005diffusion}) and for many-server queues in Halfin-Whitt heavy traffic regime (cf. \cite{garnett2002designing}, \cite{atar2005scheduling}, \cite{dai2010customer}, \cite{dai2010many}, \cite{weerasinghe2013abandonment}), 
the patience-time distribution does not have a large effect on the dynamics of the heavy traffic limiting process as well as the associated control problems. 
Whereas in \cite{reed2008approximating} and \cite{reed2012hazard} (for many server Halfin-Whitt regime), the authors considered the patience-time distribution of the $n$th system incorporating a hazard rate intensity depending on $n$. 
Hence, the single server queueing system employed in this paper incorporates these scenarios in a general framework, and the associated queue control problem is addressed under these settings. 
Furthermore, since the state process construction reflects the associated waiting time, we establish the relationship between queue length control and waiting time control by analyzing the interplay between the admission control and its cost function.

In recent years, utilizing RL in solving stochastic controls has become progressively popular, which acts as a data-driven approach. 
For instance, \cite{walton2021learning}, they presented observations and some new results that help rationalize the application of supervised learning, online learning, and reinforcement learning to queueing systems. 
In \cite{dai2022queueing} extended the theoretical framework of Advanced Policy Gradient (APG) methods for MDP, which originated from difficult queueing network control problems that have three features: infinite state space, unbounded costs, and long-run average cost objective. 
\cite{quer2022connecting} studied the connection between stochastic optimal control and reinforcement learning. They showed how the MDP can be formulated for the optimal control problem and discussed how the stochastic optimal control problem can be interpreted in a reinforcement learning framework. 
Moreover, there are many recent results developing reinforcement learning algorithms for queueing control problems. 
Their algorithm consistently generates control policies that outperform state-of-the-art heuristics in literature in various load conditions, from light to heavy traffic. 
\cite{jia2024online} proposed two online learning algorithms, termed batch upper confidence bound (BUCB) and batch Thompson sampling (BTS), in solving a price-based revenue management problem with finite reusable resources over a finite time horizon. 
We refer to \cite{dai2021refined}, \cite{feng2021scalable}, \cite{chen2023online}, \cite{baron2024supervised} for more results regarding reinforcement learning in queueing control problems.

\subsection{Organization}

The rest of the paper is organized as follows. In Section \ref{Sec1 in CH4: Stochastic Model}, we introduce the basic model and propose the formulation of the corresponding QCP. 
We also include some preliminary results of the queueing model in heavy traffic. 
Section \ref{Sec3 in CH4: Diffusion Control Problem (DCP)} is devoted to a diffusion control problem persisting with the structure of the heavy traffic limiting process. 
In Section \ref{sec: analysis of the HJB equation}, we introduce the Legendre-Fenchel transform and a formal HJB equation and provide its analysis. 
In Section \ref{Subsec2 of Sec3 in CH4: Optimal Solution of the DCP}, we first establish a verification lemma which indicates that the value function has a lower bound, and then we show that such a lower bound is achievable by taking a feedback control in Theorem \ref{An optimal control}. 
In Section \ref{Sec3 in CH4: Asymptotic Optimality}, we ``translate" the optimal strategy of the DCP to its corresponding QCP in Theorem \ref{Asysmptotic optimality}, where we demonstrate that this strategy is nearly optimal for the QCP as well. 
In Section \ref{sec5 RL}, we implement the REINFORCE algorithm and investigate the feasibility and applicability of solving our control problem. 
We conclude in Section \ref{sec conclusion}.

\textbf{Notation.} 
Let $\mathbb{N}$ represent the set of positive integers. Let $\mathbb{R}$ denote the set of real numbers and $\mathbb{R}_+$ denote the set of non-negative real numbers. For $0< T\leq \infty$, let $D[0, T]$ denote the Skorokhod space of functions with c\`adl\`ag profile
from $[0, T]$ to $\mathbb{R}$, equipped with the usual Skorokhod topology. 
The uniform norm on $[0, T]$ for a stochastic process $X$ in $D[0, T]$ is defined by $\|X\|_T = \sup_{t\in[0, T]}|X(t)|$. 
Throughout, we use $\Rightarrow$ to denote weak convergence in the Skorokhod space $D[0, T]$. For any real number $a$, $a^+ = \max\{a, 0\}$ and $a^- = \max\{-a, 0\}$. For any two real numbers $a$ and $b$, $a\wedge b = \min\{a, b\}$ and $a\vee b = \max\{a, b\}$.

\section{Stochastic Model}
\label{Sec1 in CH4: Stochastic Model}


\subsection{Basic Model}
\label{Subsec1 of Sec1 in CH4: Basic Model}

We consider a single server queueing system with impatient customers on a probability space $(\Omega, \mathcal{F}, P)$. 
The state process is the offered waiting time process. 
To introduce the state process of our model, for clarity, we first establish the regular queue length process, which provides a clear picture of the conventional single-server queue length. 
Let $A(t)$ denote the number of customers who arrived by time $t\geq 0$, $X(0)$ represent the number of customers initially in the system, and the initial queue length is given by $Q(0)=(X(0)-1)^+$ since the system only has a single server. We denote $S(t)$ as the number of customers who are either under service or completed by time $t$, and $G(t)$ denotes the number of customers who have abandoned the system by time $t$. Therefore, the queue length process can be written as
\begin{equation}
    Q(t)=Q(0) + A(t) - S(t) - G(t),
    \label{eq: queue length (in Ch4)}
\end{equation}
for all $t\geq0$. 
Here, \eqref{eq: queue length (in Ch4)} is of general interest, and it is not particularly used in this paper. 
Additionally, more precise definitions of service completion and abandonment are exhibited at the end of this subsection with the help of our state process.  

In this paper, we introduce a waiting time process as the state process and demonstrate its behavior in heavy traffic. 
To this end, we assume that initial customers are of infinite patience, and they do not abandon the system. 
Note that this is not a restrictive assumption but is for ease of analysis and can be relaxed (see Lemma 2.1 of \cite{mandelbaum2012queues} or Lemma 4.1 of \cite{weerasinghe2014diffusion}). 
We define an ordered triple $(t_j, v_j, d_j)$ for $j\geq1$ to be a collection of the information related to the $j$th customer, where $t_j$ denotes the arrival time, $v_j$ denotes the service time, and $d_j$ denotes the patience time of the $j$th customer. 
It is assumed that the service times $(v_j)$ and the patience times $(d_j)$ are i.i.d and independent of each other. 
We further assume $E(t_j)<\infty$, $E(v_1)=1$, and $\text{Var}(v_1)=\sigma_s^2< \infty$.  
Moreover, the cumulative distribution function of $d_1$ is given by $F$. We define the filtration $(\hat{\mathcal{F}}_n)_{n\geq0}$ by $\hat{\mathcal{F}}_0:=\sigma(t_1)$ and
\begin{equation}
    \hat{\mathcal{F}}_n:=\sigma((t_1, v_1, d_1), \cdots, (t_n, v_n, d_n), t_{n+1})\subseteq\mathcal{F},
    \label{hat F_n filtration}
\end{equation}
for $n\geq1$. 

The offered waiting time process characterizes the amount of time a hypothetical customer would have to wait for service if he had arrived at time $t$ with infinite patience. It is straightforward that this quantity depends on the service time of those non-abandoning customers who are already ahead in the queue waiting for service. 
It also defines the amount of workload needed to empty the queue provided no new arrivals after time $t$. 
We define the offered waiting time process (cf. \cite{reed2008approximating}, \cite{lee2011convergence}): 
\begin{equation}
    V(t) := x + \sum_{j=1}^{A(t)} v_j\mathbbm{1}_{[V(t_j-)<d_j]}-\int_0^t\mathbbm{1}_{[V(s)>0]}ds, 
    \label{offered waiting time process}
\end{equation}
for $t\geq0$, where $x\geq0$ is some fixed constant that provides the amount of time needed to empty the whole system if no arrival occurs at time zero, and the last integral term represents the cumulative server busy time up to time $t$. Further, the non-negative $V(t)$ has c\`adl\`ag profile of sample paths with an upward jump of size $v_j$ at arrival time $t_j$ of the non-abandoning customer $j$, and it is continuous non-increasing and satisfies $V(t)=(V(t_j)-(t-t_j))^+$ for all $t\in[t_j, t_{j+1})$. Observe that if the first arrival occurs strictly after time $0$, i.e., $t_1>0$, then we have $V(0) = x$ for some constant $x\geq0$ and $V(t)=(x-t)^+$ for all $t\in[0, t_1)$. 
If the first non-abandoning arrival occurs exactly at time 0, i.e., $t_1 = 0$, then a hypothetical customer arrives right after time 0 would have to wait for the amount of time the first customer waits in addition to its service time, which can also be denoted by some $x \geq0$. Note that this is a different quantity compared with the previous case; however, to reduce complexity, we may include both situations using a single initial workload $x$.
Figure \ref{fig: sample path of the offered waiting time V(t)} exhibits these descending and jumping properties. 
It is noteworthy that since $V(\cdot)$ is only affected by those non-abandoning customers, there are no downward jumps due to the abandonment in the figure. 
\begin{figure}[h!tb]
    \centering
    \begin{tikzpicture}
    \draw[->] (-0.5, 0) -- (7.5, 0) node[right] {$t$};
    \draw[->] (0, -0.5) -- (0, 3.5) node[above] {$V(t)$};
    \node at (0, 1.2) [scale=0.7, circle,fill,inner sep=1.5pt]{};
    \node at (1.8, 2.2) [scale=0.7, circle,fill,inner sep=1.5pt]{};
    \node at (3, 2) [scale=0.7, circle,fill,inner sep=1.5pt]{};
    \node at (6, 3) [scale=0.7, circle,fill,inner sep=1.5pt]{};
    \draw (1.8, 0) circle (1.5pt);
    \draw (3, 1) circle (1.5pt);
    \draw (6, 0) circle (1.5pt);
    \draw (7, 2) circle (1.5pt);
    \node at (-0.2, -0.3) {$0$};
    \node at (1.8, -0.4) {$t_1$};
    \node at (3, -0.4) {$t_2$};
    \node at (6, -0.4) {$t_3$};
    \node at (7, -0.4) {$t_4$};
    \draw[domain=0:1.2, smooth, variable=\x, line width=1pt] plot ({\x}, {1.2-\x});
    \draw[domain=1.2:1.76, smooth, variable=\x, line width=1pt] plot ({\x}, {0});
    \draw[domain=1.8:2.98, smooth, variable=\x, line width=1pt] plot ({\x}, {4-\x});
    \draw[domain=3:5, smooth, variable=\x, line width=1pt] plot ({\x}, {5-\x});
    \draw[domain=5:5.97, smooth, variable=\x, line width=1pt] plot ({\x}, {0});
    \draw[domain=6:6.98, smooth, variable=\x, line width=1pt] plot ({\x}, {9-\x});
    \end{tikzpicture}
    \caption{Sample path of the offered waiting time $V(t)$}
    \label{fig: sample path of the offered waiting time V(t)}
\end{figure}

To analyze and reformulate the offered waiting time \eqref{offered waiting time process} into a more straightforward expression as its limiting process, we introduce two martingales with respect to filtration $(\hat{\mathcal{F}}_n)_{n\geq1}$ described in \eqref{hat F_n filtration} :
\begin{equation}
    M^v(n) := \sum_{j=1}^n (v_j-1)\mathbbm{1}_{[V(t_j-)<d_j]}, \text{ and}
    \label{M^v}
\end{equation}
\begin{equation}
    M^d(n):= \sum_{j=1}^n \left(\mathbbm{1}_{[V(t_j-)\geq d_j]}-E\left[\mathbbm{1}_{[V(t_j-)\geq d_j]}|\hat{\mathcal{F}}_{j-1}\right]\right), 
    \label{M^d}
\end{equation}
for all $n\in\mathbb{N}$ (also, see (2.4) and (2.5) in \cite{lee2011convergence}). Since $V(t_j-)$ is $\hat{\mathcal{F}}_{j-1}$-measurable and the patience time $d_j$ of the $j$th customer is independent of $\hat{\mathcal{F}}_{j-1}$, we have
\begin{equation}
    P\left(V(t_j-)\geq d_j|\hat{\mathcal{F}}_{j-1}\right)=F(V(t_j-))
    \label{use this to simplify M^d}
\end{equation}
holds almost surely, where $F$ is the distribution function of $d_j$. Thus, we can simplify the conditional expectation in \eqref{M^d} 
as the following:
\begin{equation}
    M^d(n)= \sum_{j=1}^n \left(\mathbbm{1}_{[V(t_j-)\geq d_j]}-F(V(t_j-))\right).
    \label{simplified M^d}
\end{equation}
Therefore, using \eqref{offered waiting time process}, \eqref{M^v}, and \eqref{simplified M^d}, we can further represent $V(\cdot)$ as a centered equation: 
\begin{equation*}
\begin{aligned}
V(t) &= x + M^v(A(t))+\sum_{j=1}^{A(t)}\left(1-\mathbbm{1}_{[V(t_j-)\geq d_j]}\right)-\int_0^t \left(1-\mathbbm{1}_{[V(s)=0]}\right)ds \\
& = x +A(t)- t + M^v(A(t)) -M^d(A(t))-\sum_{j=1}^{A(t)} F(V(t_j-))  + \int_0^t \mathbbm{1}_{[V(x)=0]}ds
\end{aligned}
\end{equation*}
Here, we can express the summation as an integral through the idea of Riemann-Stieltjes integral. Hence, we obtain the basic system equation: 
\begin{equation}
\begin{aligned}
    V(t) = x+ (A(t)-t) + M^v(A(t))-M^d(A(t)) - \int_0^t F(V(s-))dA(s) + I(t),
    \label{centered offered waiting time process}
\end{aligned}
\end{equation}
where $I(t)=\int_0^t \mathbbm{1}_{[V(s)=0]}ds$ and it can be interpreted as the cumulative idle time up to time $t\geq0$. 
Throughout, we use \eqref{centered offered waiting time process} as the state process expression. 

We close this section by exhibiting the explicit expressions for the service completion $S(\cdot)$ and abandonment process $G(\cdot)$, which are defined by utilizing our state process.
We define
\begin{equation}
    S(t) = \sum_{j=1}^{A(t)}\mathbbm{1}_{[V(t_j-)<d_j]} \mathbbm{1}_{[t_j+V(t_j-)\leq t]} + \sum_{j=1-Q(0)}^0 \mathbbm{1}_{[V_j\leq t]},
    \label{eq: service completion process S(t) (in Ch4)}
\end{equation}
\begin{equation}
    G(t)=\sum_{j=1}^{A(t)} \mathbbm{1}_{[V(t_j-)\geq d_j]}.  
    \label{eq: abandonment process G(t) (in Ch4)}
\end{equation}
Notice that $S(t)$ can also be interpreted as the number of customers who leave the queue and get served eventually. 
In addition, the first term sums over all $j=1, \dots, A(t)$, and it is related to the incoming customers. The additional summation in \eqref{eq: service completion process S(t) (in Ch4)} utilizes the non-positive indices to represent the initial customers in the system (cf. \cite{dai2010customer}), namely $j=1-X(0), \dots, 0$. Thus, customers $j=1-X(0), \dots, 0$ are in the system initially and customers $j=1-Q(0), \dots, 0$ are in the queue initially. 
Here, we use the notation $V_j$ for $j=1-Q(0), \dots, 0$ to denote the waiting time for those customers initially in the queue at time zero. 
Although the service completion and abandonment defined in \eqref{eq: service completion process S(t) (in Ch4)} and \eqref{eq: abandonment process G(t) (in Ch4)} are not particularly used in this paper, their explicit definitions can be expressed by identifying the status of each customer using their offered waiting times and patience times.

\begin{remark}
    In the literature, our state process $V(t)$ may be viewed as the virtual waiting time since it exhibits a virtual time for a hypothetical customer who arrives at time $t\geq 0$, and in contrast, $V_j$ denotes the offered waiting time of the $j$th customer (see \cite{stanford1979reneging}, \cite{baccelli1984single}, \cite{dai2010customer} for more discussions). 
    However, we do not distinguish these concepts in our work since we only address the waiting time $V(t)$ for a potential hypothetical customer who had arrived at time $t\geq 0$. 
    To ensure consistency with previous research in \cite{lee2011convergence}, we preserve the name offered waiting time for \eqref{offered waiting time process}. 
\end{remark}

\subsection{A Sequence of Queueing Systems}
\label{Subsec2 of Sec1 in CH4: Sequence of Queueing Systems}

Our objective is to introduce a cost structure to the above-described queueing system and then address a cost minimization problem when the arrival rate is very large and, in concert, service times are very short. Such a situation leads to a diffusion approximation of the state process in heavy traffic, which is driven by a Brownian motion, and it has been addressed in \cite{lee2011convergence}. 
We extend the weak convergence result to further formulate a control problem for system managers, aiming to minimize their costs within a discrete-view framework.
Here, we introduce a cost structure consisting of two types of costs: a control cost and an idle server cost. 
The control cost is characterized by a non-trivial cost function related to the use of control, and idle server cost is generated by the penalty of idleness. 
We will see that under such a cost structure and heavy traffic conditions described below, zero control leads to a large arrival rate, which further derives a large queue length. This results in a large control cost and a small cost generated from the idle server. On the contrary, the implementation of large values for the control leads to a small arrival rate, which further yields a short queue length and, thus, less control cost and enormous idleness cost. 
Hence, it is natural to seek a special control that minimizes the expense. We will show that under heavy traffic conditions, the QCP can be approximated by a DCP with an analogous cost functional. 

To this end, we first establish a sequence of queueing systems parameterized by $n\geq0$ along with some assumptions such that the arrival rate gets large and service time gets short when $n\to\infty$. 
Then, we introduce a cost structure for the sequence of QCPs associated with the queueing systems. Intuitively, if we speed up the whole system, that is, we let the arrival rate be large, which leads to a situation where customers arrive quite frequently, then the service time should be relatively fast to reach a stable system. 
On the other hand, if the inter-arrival time becomes large, it may not be necessary to have highly efficient servers. 

Under these observations, we can construct the waiting time process of the $n$th queueing system similar to \eqref{centered offered waiting time process}. Let the offered waiting time process $V_n(\cdot)$ be the state process and $A_n(\cdot)$ be the arrival process with a state-dependent arrival rate $n\lambda_n(V_n(\cdot))$. Each component of the ordered triple $(t_j^n, v_j^n, d_j^n)$ denotes the arrival time, service time, and patience time of the $j$th customer in the $n$th system, respectively. We introduce the discrete-time filtration $(\hat{\mathcal{F}}_i^n)_{i\geq0}$ by $\hat{\mathcal{F}}_0^n=\sigma(t_1^n)$ and
\begin{equation}
    \hat{\mathcal{F}}_i^n:=\sigma((t_1^n, v_1^n, d_1^n), \cdots, (t_i^n, v_i^n, d_i^n), t_{i+1}^n), 
    \label{discrete time filtration}
\end{equation}
for $i\geq1$. The associated continuous time filtration $(\mathcal{F}_t^n)_{t\geq0}$ is given by
\begin{equation}
    \mathcal{F}_t^n :=\hat{\mathcal{F}}_{[nt]}^{n}:=\sigma((t_1^n, v_1^n, d_1^n), \cdots, (t_{[nt]}^n, v_{[nt]}^n, d_{[nt]}^n), t_{{[nt]}+1}^n), 
    \label{continuous time filtration}
\end{equation}
for $i\geq1$. We define the offered waiting time process $\{V_n(t)\}_{t\geq0}$ (cf. \cite{reed2008approximating}, \cite{lee2011convergence}):
\begin{equation}
    V_n(t) = \frac{x_n}{\sqrt{n}} +  \frac{1}{n}\sum_{j=1}^{A_n(t)}v_j^n\mathbbm{1}_{[V_n(t_j^n-)< d_j^n]} - \int_0^t \mathbbm{1}_{[V_n(s)>0]}ds, 
    \label{offered waiting time for the nth system}
\end{equation}
where $x_n/\sqrt{n}$ represents the amount of time needed to empty the whole system if there is no arrival at time zero,  
and we assume $x_n$ is deterministic and $\lim\limits_{n\to\infty}x_n= x$. 

As an extension, we retain some basic assumptions (Assumptions 3.1-3.3) in \cite{lee2011convergence} as follows. 

\begin{assumption}\label{assumption1}
\begin{enumerate}[(i)]
    \item The sequences $(v_j^n)_{j\geq1}$ and $(d_j^n)_{j\geq0}$ are i.i.d, non-negative random variables with $v_j^n=v_j/n$ for all $j\geq1$, $E(v_j)=1$ and $E(v_j-1)^2=\sigma_s^2>0$. Furthermore, the random variables $v_j^n$ and $d_j^n$ are independent of $\hat{\mathcal{F}}_{j-1}^n$ for all $j\geq1$. 
    \item The arrival process $A_n(\cdot)$ has a state dependent intensity $n\lambda_n(V_n(\cdot))$. 
\end{enumerate}
\end{assumption}

\begin{assumption}\label{assumption2}
    \begin{enumerate}[(i)]
    \item The function $\lambda_n(\cdot)$ is Borel measurable on $[0, \infty)$ and 
    there exists two positive constants $\epsilon_0$, $C_0 > 0$ (independent of $n$ and $y$) such that $0<\epsilon_0<\lambda_n(y)< C_0$ for all $y\in[0, \infty)$ and $n\geq1$. 
    \item For each $K>0$, $\lim\limits_{n\to\infty}\sup_{y\in[0, K]} |1-\lambda_n(y)|=0$. 
    \item There exists a non-negative, locally Lipschitz continuous function $u(\cdot)$ defined on $[0, \infty)$ such that
    \begin{equation}
        \lim\limits_{n\to\infty}\sup_{y\in[0, K]}\left|u_n(y)-u(y)\right|=0, 
    \end{equation}
    where $u_n(y):=\sqrt{n}\left(1-\lambda_n\left(\frac{y}{\sqrt{n}}\right)\right)$ for all $y\geq0$ and for all $K>0$. 
\end{enumerate}
\end{assumption}

\begin{assumption}\label{assumption3}
    Let $F_n(\cdot)$ be the right continuous abandonment distribution function of the i.i.d. sequence $(d_j^n)_{j\geq1}$. Assume that $F_n(0)=0$ and there exists a non-negative, locally Lipschitz continuous function $H(y)$ such that
\begin{equation}
    \lim\limits_{n\to\infty}\sup_{y\in[0, K]} \left|\sqrt{n}F_n\left(\frac{y}{\sqrt{n}}\right)-H(y)\right|=0, 
\end{equation}
for each $K>0$. As a consequence, $H(0)=0$ and $\lim\limits_{n\to\infty}F_n(y/\sqrt{n})=0$ for all $y\in[0, \infty)$. 
\end{assumption}

\begin{assumption}\label{assumption4}
    \begin{enumerate}[(i)]
    \item The cost function $C(\cdot)$ is assumed to be a twice continuously differentiable, non-increasing, and convex function on $\mathbb{R}$. 
    For $x\in [0, \infty)$, it is assumed that $0 \leq C(x) \leq M$ for some constant $M > 0$. 
    For $x\in(-\infty, 0)$, we assume that there exists a constant $K_1> 0$ and an integer $l\geq 1$ such that $0\leq C(x) \leq K_1 (1 + |x|^l)$. 
    \item The sequence of service time $(v_i)$ described above also satisfies
    \begin{equation}
        E[v_i^{m(1+\epsilon)}]<\infty, 
    \end{equation}
    for some integer $m>2$ and small $\epsilon>0$. 
    
    \item The sequence of arrival intensity functions $(\lambda_n(\cdot))$ also satisfies that 
        there exist two constants $\delta_0>0$ and $M>0$ such that
        \begin{equation}\label{eq: supsup u_n < M}
            \sup_{n\geq1}\sup_{s\in[0, \delta_0]} \sqrt{n}|1-\lambda_n(s)|<M.
        \end{equation}
        

    \item The sequence of patience-time distribution functions $(F_n)$ also satisfies
    \begin{equation}
        0\leq \sqrt{n} F_n\left(\frac{y}{\sqrt{n}}\right)\leq C_1y(1+y^r), 
    \end{equation}
    for all $y\in[0, \infty)$, where $C_1>0$ is a generic constant independent of $n$, and the constant $r>0$ satisfies $2(r+1)<m$. 
\end{enumerate}
\end{assumption}

\begin{remark}
    Consider the arrival rate assumptions. 
    Assumption \ref{assumption1} (ii) assumes the arrival rate has state-dependent intensity, which further boils down to the definition of waiting time \eqref{offered waiting time process}.
    To scrutinize it, we define a discrete filtration: 
    \[
        \mathring{\mathcal{F}}_m^n = \sigma((t_1^n, v_1^n, d_1^n), \cdots, (t_m^n, v_m^n, d_m^n)). 
    \]
    It is noteworthy that $A_n(t)$ is a stopping time with respect to the filtration $(\mathring{\mathcal{F}}_m^n)_{m\geq 0}$, where $A_n(\cdot)$ is the arrival process with arrival times $(t_j^n)$ and $\mathring{\mathcal{F}}_m^n\subset \hat{\mathcal{F}_m^n}$ for all $n\geq 0$ as defined in \eqref{discrete time filtration}. 
    Then, we define $\Tilde{\mathcal{G}}_t^n:= \mathring{\mathcal{F}}_{A_n(t)}^n$ for all $t\geq 0$, which can be thought of as the information available to the system manager over time. 
    Hence, for all $t\in[t_i^n, t_{i+1}^n)$, the quantity $\int_0^t \lambda_n(V_n(s))ds$ is known by time $t_i^n$. 
    Observe that once $V_n(t_i^n)$ is known, $V_n(t)$ is well defined on the next interval $[t_i^n, t_{i+1}^n)$ since we assumed $\{A_n(t) - n\int_0^t \lambda_n(V_n(s))ds\}_{t\geq 0}$ is a $\Tilde{\mathcal{G}}_t^n$-martingale. 
    For a similar definition and more details of the waiting time process, one may refer to \cite{ward2005diffusion}, 
    \cite{mandelbaum2008queues}, \cite{reed2008approximating}, \cite{dai2010customer}, \cite{lee2011convergence}. 
    
\end{remark}

\begin{remark}
Consider the effects of negative controls. 
In Assumption \ref{assumption2} (i), one may experience negative controls if $C_0 > 1$. 
In some practical environments, it is natural to assume $C_0 = 1$ if a system manager prefers to adopt a non-negative control such that $u_n\geq 0$ in Assumption \ref{assumption2} (iii). 
In this case, the cost function assumptions may be released and focused on the positive side. 
Assumption \ref{assumption4} (i) provides a general assumption of the potential cost function due to the use of control. 
It is conventional to admit a polynomial or exponential decay structure when a negative control occurs. 
We exhibit the cost function for positive controls and negative controls separately. 
However, if we restrict a non-negative control $u_n \geq 0$ throughout the system, this cost function can directly be assumed to be bounded above (see \cite{xie2022topics} for non-negative controls). 
Moreover, other assumptions in Assumption \ref{assumption4} allow us to invoke two results in \cite{lee2011convergence} to deduce Lemma \ref{second moment bound of L} in Section \ref{Sec3 in CH4: Asymptotic Optimality}. 
One may refer to equations (6.6), (6.7), (6.9) in \cite{lee2011convergence}. 
These assumptions are required only for Lemma \ref{second moment bound of L} as well as the weak convergence of cost functional in Section \ref{Sec3 in CH4: Asymptotic Optimality}, and are not used elsewhere in our results. 
We will emphasize these assumptions when presenting those results. 
\end{remark}

To further analyze the $n$th queueing system \eqref{offered waiting time for the nth system}, it is helpful to introduce some fluid-scaled and diffusion-scaled quantities. We define
\begin{equation}
\begin{aligned}
    \bar{A}_n(t)&:=\frac{A_n(t)}{n}, \text{ and }\\
    \hat{A}_n(t)&:=\frac{1}{\sqrt{n}}\left(A_n(t)-n\int_0^t\lambda_n(V_n(s))ds\right), 
    \label{scaled A quantities}
\end{aligned}
\end{equation}
for all $t\geq0$. Moreover, we define two diffusion-scaled martingales of \eqref{M^v} and \eqref{M^d} (see (3.7) in \cite{lee2011convergence}) with respect to the filtration $(\mathcal{F}_t^n)$ introduced in \eqref{continuous time filtration}:
\begin{equation}
\begin{aligned}
    \hat{M}_n^v(t) &:= \frac{1}{\sqrt{n}}\sum_{j=1}^{[nt]}(v_j-1)\mathbbm{1}_{[V_n(t_j^n-)<d_j^n]}, \text{ and }\\
    \hat{M}_n^d(t) &:= \frac{1}{\sqrt{n}}\sum_{j=1}^{[nt]}\left(\mathbbm{1}_{[V_n(t_j^n-)\geq d_j^n]}-E(\mathbbm{1}_{[V_n(t_j^n-)\geq d_j^n]}|\hat{\mathcal{F}}_{j-1}^n)\right),
    \label{two diffusion-scaled martingales}
\end{aligned}
\end{equation}
which enable us to construct an analogous martingale representation of the $n$th system similar to \eqref{M^v} and \eqref{M^d} in \eqref{centered offered waiting time process}. 
Under the above Assumptions \ref{assumption1}-\ref{assumption3} and by \eqref{offered waiting time for the nth system}, \eqref{scaled A quantities}, \eqref{two diffusion-scaled martingales}, and with some algebraic manipulations, we obtain the diffusion-scaled state process: 
\begin{equation}
\begin{aligned}
    \hat{V}_n(t)  = x_n &+ \hat{A}_n(t) + \hat{M}_n^v(\bar{A}_n(t)) - \hat{M}_n^d(\bar{A}_n(t)) \\ 
    &-\int_0^t u_n(\hat{V}_n(s))ds - \frac{1}{\sqrt{n}} \int_0^t F_n\left(\frac{\hat{V}_n(s-)}{\sqrt{n}}\right) dA_n(s) + \hat{L}_n(t),
    \label{queueing systems}
\end{aligned}
\end{equation}
where $u_n(\hat{V}_n(\cdot))$ is the control process, $u_n$ represents the scaled deviation of arrival rate from one, and it is given by
\begin{equation}
    u_n(x)=\sqrt{n}\left(1-\lambda_n\left(\frac{x}{\sqrt{n}}\right)\right), 
    \label{eq: u_n control}
\end{equation}
for all $x\geq0$, the state process $\hat{V}_n(\cdot)$ is the offered waiting time process $V_n(\cdot)$ scaled by $\sqrt{n}$, and the scaled idle time process is given by 
\begin{equation*}
    \hat{L}_n(t) = \sqrt{n}\int_0^t \mathbbm{1}_{[\hat{V}_n(s) = 0]}ds, 
\end{equation*}
for all $t\geq0$.

\subsection{QCP Formulation}

We establish the QCP associated with the single server system, which is expressed by an ordered quadruple $\{(x_n, u_n, \hat{V}_n, \hat{L}_n)\}_{n\geq1}$. 
We introduce a cost structure that admits two types of costs: a control cost related to the use of control, which accumulates a control cost of $C(u_n(\hat{V}_n(t)))dt$, and a penalty for idleness cost, which accumulates a cost proportional to $d\hat{L}_n(t)$. 
Here, the control cost is characterized by a cost function $C(\cdot)$, which satisfies Assumption \ref{assumption4} (i), and the use of control $u_n$ in the $n$th system. 
One may observe that the system may adopt a negative control of the $n$th system if the arrival rate is large. 
It is straightforward to see that a strong control $u_n$ leads to a small cost due to the fact that a small arrival rate reduces the congestion, which usually results in a lower holding cost but a large idle server cost. 
On the contrary, a weak control $u_n$ by enlarging arrival rates (up to $C_0$) produces more cost since a congested queue potentially requires a larger holding cost and, meanwhile, a smaller idleness cost. 
In both circumstances, the holding costs are reflected through the use of control. 
It is natural to suggest a non-increasing cost function that admits a large cost when the control is weak and a small cost when the control is strong, as given in Assumption \ref{assumption4} (i). 
This mechanism can also be observed at the process level, where the control $u_n$ may drive the waiting time process towards the origin, thereby triggering the idle time process.
Therefore, the infinite-horizon discounted cost functional associated with \eqref{queueing systems} with interest rate $\alpha > 0$ is defined by
\begin{equation}
    J(x_n, u_n, \hat{V}_n, \hat{L}_n) = E\left[\int_0^{\infty} e^{-\alpha t} \left(C(u_n(\hat{V}_n(t)))dt + pd\hat{L}_n(t)\right)\right], 
    \label{cost functional of QCP}
\end{equation}
where $p>0$ is fixed constants. 

Further, we observe that in the state process perspective \eqref{queueing systems}, when a zero control is applied, i.e., $\lambda_n \equiv 1$, the state process tends to the origin less often than when a positive control is applied, which leads to a lower cost for idleness. However, the control cost becomes quite expensive due to the profile of the cost function. 
Naturally, this occurs since a large arrival rate leads to a congested queue length, yielding a large cost. 
When robust admission control is applied, the state process tends to the origin more frequently, and it creates a large cost of idleness due to the reduced congestion of the system. 
This is especially true for a very negative control when $\lambda_n = C_0 - \delta \gg 1$ for some small $\delta>0$ since a very negative control $u_n$ prevents the state process from reaching the origin so that the idleness process activates less often than a strong positive control, which pushes the state process towards the origin and the idle time process will be activated. 
Hence, the queue control problem can be interpreted as finding an optimal admission control $u_n^*$ such that it minimizes the cost functional $J(x_n, u_n, \hat{V}_n, \hat{L}_n)$.

\subsection{Weak Convergence of Queueing Systems}
\label{Subsec3 of Sec1 in CH4: Weak Convergence of Queueing Systems}

We close this section by presenting the weak convergence result for the queueing systems proposed above, which plays a pivotal role in proving the convergence of the expected value of the cost functional in Section \ref{Sec3 in CH4: Asymptotic Optimality} so that it enables us to deduce an asymptotic optimal control. 

We acknowledge the prior work in \cite{lee2011convergence}, which established the diffusion approximation of the appropriately scaled waiting time process in \eqref{queueing systems}. 
This result enables us to ``translate'' the optimal policy of the DCP to that of the QCP, facilitating the derivation of an asymptotically optimal strategy to obtain an asymptotic optimal strategy in Section \ref{Sec3 in CH4: Asymptotic Optimality}. 
Moreover, the proof of Proposition \ref{diffusion limit} is identical to that of Theorem 4.10 in \cite{lee2011convergence} where one may assume $x_n\equiv x = 0$, and their assumptions are fulfilled. 

\begin{proposition}[Diffusion approximation] 
Assume $x_n$ converges to $x$ as $n\to\infty$ and Assumptions \ref{assumption1}-\ref{assumption3} hold. 
Then the process $(\hat{V}_n, \hat{L}_n)$ converges weakly to $(X, L)$ as $n\to\infty$ in $ D^{2}[0, \infty)$, where $(X, L)$ is the unique strong solution to the reflected stochastic differential equation
\begin{equation}
    X(t) = x + \sigma W(t) - \int_0^t u(X(s))ds-\int_0^t H(X(s))ds + L(t), 
    \label{limiting process}
\end{equation}
for all $t\geq0$. Here, $W(\cdot)$ is a standard Brownian motion, and $\sigma>0$ is a constant that satisfies $\sigma^2=1+\sigma_s^2$. The function $u(\cdot)$ and $H(\cdot)$ are described in the Assumption 2 and 3. 
Here, $L(\cdot)$ is the local time process of $X$ at the origin. The process $L(\cdot)$ is unique, continuous, non-decreasing process such that $L(0)=0$ and $\int_0^t X(s)dL(x)=0$
for all $t\geq0$, and that $X(\cdot)\geq0$. 
\label{diffusion limit}
\end{proposition}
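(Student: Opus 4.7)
The plan is to prove this convergence via the standard tightness plus identification-of-limit strategy, exploiting the fact that (as already recorded in the excerpt) $\xi_n \Rightarrow \sigma W$ in $D[0,\infty)$ and $\|\epsilon_n\|_T \to 0$ in probability, together with $x_n \to x$ and the local uniform convergence $u_n \to u$ supplied by Assumption 2(iii). Starting from the simplified representation \eqref{simplified queue}, I would view the pair $(\hat V_n, \hat L_n)$ as the image of the driving path
\begin{equation*}
Z_n(t) := x_n + \xi_n(t) - \epsilon_n(t) - \int_0^t u_n(\hat V_n(s))\,ds - \int_0^t H(\hat V_n(s))\,ds
\end{equation*}
under the one-dimensional Skorokhod reflection map $(\Phi,\Psi)$ at the origin. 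Since this map is Lipschitz continuous in the uniform topology, the strategy reduces to showing that $Z_n$ converges, along every subsequential limit point, to $x + \sigma W(\cdot) - \int_0^{\cdot} u(X(s))\,ds - \int_0^{\cdot} H(X(s))\,ds$, and then to invoking a continuous-mapping-plus-uniqueness argument.

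First I would establish tightness of $(\hat V_n, \hat L_n)$ in $D^2[0,T]$ for each fixed $T>0$. Tightness of $\xi_n$ is inherited from Lemmas 4.5 and 4.7 of \cite{lee2011convergence}. Boundedness of $u_n$ on compact sets (Assumption 2) and local Lipschitz continuity of $H$ (Assumption 3), together with the identity $|\hat L_n(t)-\hat L_n(s)| \leq \hat V_n(t) + |Z_n(t)-Z_n(s)|$ afforded by the reflection map, yield a modulus-of-continuity estimate; combined with a uniform-in-$n$ bound on $\sup_{t\leq T}\hat V_n(t)$ (again using Lipschitz continuity of $\Phi$ applied to the tight input), this gives $C$-tightness of $(\hat V_n,\hat L_n)$, noting that the input jumps are of size $O(1/\sqrt n)$ and vanish in the limit.

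Next, by passing to a subsequence along which $(\hat V_n,\hat L_n,\xi_n,\epsilon_n)$ converges jointly in distribution and applying the Skorokhod representation theorem, I may assume almost sure convergence in $D[0,T]$ to some $(X,L,\sigma W,0)$. Using the local uniform convergence $u_n\to u$, the local Lipschitz property of $u$ and $H$, the a.s.\ uniform convergence of $\hat V_n$ on $[0,T]$, and dominated convergence, one passes to the limit inside both drift integrals. This identifies $(X,L)$ as a solution of \eqref{limiting process}, with $X\geq 0$, $L(0)=0$, $L$ continuous and non-decreasing and flat off $\{X=0\}$, these last properties being inherited from the Skorokhod map. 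Pathwise uniqueness of \eqref{limiting process} — which holds because $u$ and $H$ are locally Lipschitz on $[0,\infty)$ and the reflection is one-dimensional — then upgrades the subsequential convergence to the full weak convergence claimed.

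The main technical obstacle is handling the nonlinear, only locally Lipschitz drift terms $u_n(\hat V_n)$ and $H(\hat V_n)$ while the state process itself depends on them. I would deal with this by localization: introduce stopping times $\tau_K^n := \inf\{t\geq 0: \hat V_n(t)\geq K\}$, argue on $[0,\tau_K^n\wedge T]$ where $u_n$ and $H$ may be treated as globally Lipschitz with a $K$-dependent constant, and let $K\to\infty$ using the uniform-in-$n$ bound $P(\sup_{t\leq T}\hat V_n(t)>K)\to 0$ obtained from the Skorokhod-map bound on $\|Z_n\|_T$. Since the argument follows line-by-line that of Theorem 4.10 in \cite{lee2011convergence} once Assumptions 1--3 are verified, the cleanest presentation is simply to reduce the claim to that theorem after observing that our hypotheses are strictly stronger.
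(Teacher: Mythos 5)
Your proposal is correct and ends up exactly where the paper does: the paper's entire proof of Proposition \ref{diffusion limit} is the observation that Assumptions 1--3 fulfill the hypotheses of Theorem 4.10 in \cite{lee2011convergence}, so the result follows verbatim from that theorem. Your Skorokhod-map/tightness/identification sketch is a faithful outline of the argument underlying that cited theorem, and your closing reduction to it is precisely the route the paper takes.
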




\section{Diffusion Control Problem (DCP)}
\label{Sec3 in CH4: Diffusion Control Problem (DCP)}

\subsection{Problem Formulation}
\label{Subsec1 of Sec2 in CH4: Problem Formulation}

Motivated by the heavy traffic limit in Proposition \ref{diffusion limit}, we formulate a stochastic control problem for a diffusion process generated from \eqref{limiting process}, which will be proved to be the limit of a cost minimization problem for the queueing system introduced in previous sections. 
We resolve the DCP utilizing its corresponding formal HJB equation and a Legendre-Fenchel transform.



To formulate an optimal diffusion control problem, we introduce a controlled state process that is initiated from a controlled queueing system in heavy traffic: 
\begin{equation}
    X(t) = x + \sigma W(t) - \int_0^t u(s)ds - \theta\int_0^t X(s)ds + L(t), 
    \label{constrol system}
\end{equation}
where 
$W(\cdot)$ is a standard Brownian motion on a probability space $(\Omega, \mathcal{F}, P)$ and the process $L(\cdot)$ is the local time process of $X(\cdot)$ at origin (see Chapter 3 in \cite{karatzas2014brownian}) such that $L(0) = 0$ and
\begin{equation}
    \int_0^tX(s)dL(s) = 0, 
    \label{local time property}
\end{equation}
for all $t\geq 0$, and $X(t) \geq 0$ for $t\geq 0$. 
The drift coefficient $\theta>0$ and the parameter $\sigma>0$ are fixed positive constants. 
Let $(\mathcal{F}_t)$ be the filtration of Brownian motion. Then $X(\cdot)$ is adapted to the filtration $(\mathcal{F}_t)$ with initial starting state $x$. 
The drift process $u(\cdot)$ is defined on $[0, \infty)$ and adapted to the filtration $(\mathcal{F}_t)$. We assume its integral exists and $\int_0^T u(s)ds$ is finite a.s. for $T\geq0$. 
Moreover, it satisfies the Assumption \ref{assumption2} (iii). Here, $\{u(t): t\geq0\}$ is described as an admission control process since a positive control prevents the system from walking away from the origin, and a negative control enforces the system traveling towards positive infinity. 

\begin{remark}    
It is worth mentioning that we are concerned with a linear $H(\cdot)$ function in our model, where we mainly concentrate and emphasize the structure and dynamic of the non-trivial control $u(\cdot)$ process and conduct rigorous and explicit asymptotic analysis for the admission control problems. 
Moreover, a specific limit function $H(\cdot)$ would provide a precise solution profile in later analysis of the HJB. 
It is natural to consider more general non-linear functions corresponding to practical models as long as the function structures are specified. However, this may be a challenging problem due to the non-trivial controls in the potential HJB equation, and a more general case will be addressed in future studies. 
For more examples of the $H(\cdot)$ functions, we may refer to Remark 3.4 in \cite{lee2011convergence}. 
\end{remark}

We also introduce a non-trivial continuous cost function $C: \mathbb{R}\mapsto[0, \infty)$ that satisfies Assumption \ref{assumption4} (i). 
Under the cost structure introduced above, we consider two types of costs: a cost associated with the use of control, which we label as control cost, and a penalty for reaching the origin, which we label as local time cost.
When a control $u(t)$ is applied at time $t$, it accumulates a control cost of $C(u(t))dt$. 
When the state process $X$ reaches zero, it accumulates a cost proportional to $dL(t)$ where $L(\cdot)$ is the local time process. 

To motivate further discussions, we intend to demonstrate this cost structure intuitively.
On the one hand, if no control is applied, the state process $X$ reaches zero less often than when positive control is in effect as described in \eqref{constrol system}. 
When zero control is applied, the local time cost becomes small. However, the control cost becomes large since the cost function achieves its maximum at the origin.  
Moreover, if a strong negative control is applied, this influence is more significant. 
On the other hand, if the system manager imposes a large control $u$, then the control cost $C(u)$ is lower, but the state process $X$ reaches zero more frequently. This yields a higher local time cost. 
Therefore, it is natural to seek an optimal control that balances the trade-off between control cost and idle server cost, which leads to a minimal cost under our conditions. 

In order to characterize such scenarios, we introduce the infinite-horizon discounted cost functional associated with the controlled process \eqref{constrol system} by
\begin{equation}
    J(x, u, X) = E \left[ \int_{0}^{\infty} e^{-\alpha t} \left(C(u(t))dt + pdL(t)\right) \right], 
    \label{costfunctional}
\end{equation}
where the parameters $p>0$ and $\alpha>0$ are fixed constants. 
The objective of a system manager is to maintain the state process $X(\cdot)$ by varying the control $u$ so that it stays in a low-cost region. 

For $x\geq0$, we call an ordered triple $(x, u, X)$ on the probability space $(\Omega, \mathcal{F}, P)$ an admissible control system if an $\mathcal{F}_t$-adapted control process $u(\cdot)$ satisfies the above assumptions and a corresponding state process $X(\cdot)$ obtained from \eqref{constrol system} exists on $[0, \infty)$. 
Let $\mathcal{A}(x)$ denote the collection of all such admissible control systems $(x, u, X)$. This set is non-empty since \eqref{constrol system} without control has a unique weak solution (see Theorem 5.2.1 in \cite{oksendal2013stochastic}). 
The corresponding value function of the diffusion control problem is defined by
\begin{equation}
    V(x) = \inf_{(x, u, X)\in \mathcal{A}(x)} J(x, u, X),
    \label{value function}
\end{equation}
where the infinite-horizon discounted cost functional $J$ is given by \eqref{costfunctional}. 
Notice that the value function is well defined since the cost functional associated with zero control is finite (see Appendix \ref{Appendix A}). 

Hence, our goal is to find an optimal admissible control system $(x, u^*, X^*)$ which minimizes the cost functional $J(x, u, X)$. 
Our strategy mainly relies on two folds: first, we establish a verification lemma that exhibits a lower bound of the value function; second, we demonstrate that such a lower bound obtained in the first step is achievable by taking a specific control, which is actually the optimal strategy to the DCP. 
To this end, we introduce and analyze a formal HJB equation in the next section.

\subsection{Analysis of the HJB equation}
\label{sec: analysis of the HJB equation}

Next, we introduce the formal Hamilton-Jacobi-Bellman (HJB) equation associated with the diffusion control problem \eqref{constrol system} by 
\begin{equation}
\begin{aligned}
    \inf_{u\geq 0} \left\{ \frac{\sigma^2}{2} Q''(x) - uQ'(x) + C(u)- \theta x Q'(x)  - \alpha Q(x) \right\} &= 0, \\
    Q'(0) &= -p. 
    \label{formal HJB}
\end{aligned}
\end{equation}
We exhibit our main result of this section in the following theorem, and the rest of this section is devoted to its discussion. We postpone its technical proof to the appendix.

\begin{theorem}
There is a bounded twice continuously differentiable function $Q$ on $[0, \infty)$ satisfying:
\begin{enumerate}[(i)]
    \item $Q$ is strictly decreasing, convex on $[0, \infty)$, 
    
    \item There is an $M_0>0$ such that $-M_0\leq Q'(x)<0$ for all $x\geq 0$, and
    
    \item $Q$ satisfies the formal HJB equation \eqref{formal HJB} on $[0, \infty)$.
\end{enumerate}
\label{A solution to the HJB equation}
\end{theorem}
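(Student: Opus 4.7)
The plan is to solve \eqref{rewritten HJB} by a shooting method on the free initial value $q := Q(0)$. Since \eqref{rewritten HJB} is a second-order ODE with only the one prescribed boundary condition $Q'(0)=-p$, I would parametrize by $q \geq 0$ and study the initial-value problem
\[
\frac{\sigma^2}{2}\,Q''(x) = F(Q'(x)) + \theta x\,Q'(x) + \alpha\,Q(x), \quad Q(0) = q, \quad Q'(0) = -p.
\]
By \eqref{F'}, $F$ is $C^1$ on $(-\infty,0)$, so the right-hand side is locally Lipschitz in $(Q, Q')$ on any region where $Q'$ is bounded away from $0$, which gives a unique $C^2$ solution $Q_q$ on a maximal interval $[0, T_q)$ along which $Q_q' < 0$.

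The shooting argument then classifies initial data by how $Q_q$ exits the desired region $\{Q>0,\, Q'<0\}$:
\[
\mathcal{N}_- := \{q \geq 0 : Q_q \text{ hits } 0 \text{ first}\}, \qquad \mathcal{N}_+ := \{q \geq 0 : Q_q' \text{ hits } 0 \text{ first}\}.
\]
At $q=0$ we have $Q_0(x) = -px + o(x) < 0$ near $0$, so $0 \in \mathcal{N}_-$. For $q$ sufficiently large, $Q_q''(0) = (2/\sigma^2)[F(-p)+\alpha q]$ is large and positive, driving $Q_q'$ quickly up to $0$ while $Q_q$ remains positive, so such $q \in \mathcal{N}_+$. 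Continuous dependence on the initial data makes $\mathcal{N}_-$ and $\mathcal{N}_+$ open and disjoint in $[0,\infty)$, so the separatrix $\mathcal{N}_0 := [0,\infty)\setminus(\mathcal{N}_-\cup\mathcal{N}_+)$ is non-empty. Picking any $q^* \in \mathcal{N}_0$ and setting $Q^* := Q_{q^*}$, by construction $Q^* > 0$ and $(Q^*)' < 0$ on $[0, T_{q^*})$, so $Q^*$ is strictly decreasing.

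Next I would establish global existence and the quantitative bounds. Monotone decrease with $Q^* \geq 0$ yields $0 \leq Q^*(x) \leq q^*$, which combined with the ODE rules out blow-up and gives $T_{q^*}=\infty$. The lower bound $(Q^*)'(x) \geq -M$ follows since $F((Q^*)') \in [-C(0),0]$ is bounded, while $(Q^*)' \to -\infty$ would drive $Q^*$ to cross $0$ (contradicting $q^*\notin\mathcal{N}_-$); combined with an integrated form of \eqref{rewritten HJB} this yields a uniform $M$. Asymptotics $Q^*(x),(Q^*)'(x)\to 0$ as $x\to\infty$ then follow by balancing terms in the ODE against the boundedness of $Q^*$.

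The delicate step, and the main obstacle I anticipate, is convexity. Differentiating \eqref{rewritten HJB} gives the first-order linear equation
\[
\frac{\sigma^2}{2}\,u'(x) = \bigl[F'((Q^*)'(x)) + \theta x\bigr] u(x) + (\alpha+\theta)(Q^*)'(x)
\]
for $u := (Q^*)''$, with coefficient $a(x) := F'((Q^*)') + \theta x \geq 0$ (using $F'\geq 0$ on $(-\infty,0)$ from \eqref{F'}) and forcing $(\alpha+\theta)(Q^*)'<0$. A maximum-principle argument then rules out $u<0$: at any interior local minimum $x_1$ with $u(x_1)<0$ we would have $u'(x_1)=0$, so the equation forces $a(x_1)u(x_1) = -(\alpha+\theta)(Q^*)'(x_1) > 0$, contradicting $a\geq 0$ and $u(x_1)<0$. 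Combined with the boundary decay $u(x)\to 0$ as $x\to\infty$ and the check that $u(0)\geq 0$ (which I expect to hold automatically at the separatrix, since sub-critical $q$ with $Q''(0)<0$ are already absorbed into $\mathcal{N}_-$), this gives $(Q^*)''\geq 0$ on $[0,\infty)$, establishing convexity and completing the three claimed properties.
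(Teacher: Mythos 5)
Your proposal is a shooting argument, which is also the paper's strategy in \ref{Appendix B}; shooting on $q=Q(0)$ is in fact equivalent to the paper's shooting on $r=Q''(0)$, since the ODE at $x=0$ forces $Q''(0)=\tfrac{2}{\sigma^2}\bigl(F(-p)+\alpha Q(0)\bigr)$. The genuinely different ingredients are (a) your exit dichotomy ($Q$ reaches $0$ versus $Q'$ reaches $0$), whereas the paper tracks $W_r=Q'$ and splits according to whether $W_r'=Q''$ or $W_r$ vanishes first (using a comparison lemma in $r$ and a supremum, rather than openness and connectedness), and (b) your maximum-principle proof of convexity, which the paper obtains for free because its separatrix is by construction the boundary of $\{r: Q''\text{ vanishes somewhere}\}$. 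Your convexity argument is sound and can be streamlined: whenever $u=Q''\le 0$ one has $\tfrac{\sigma^2}{2}u'=a(x)u+(\alpha+\theta)Q'<0$, so $\{u<0\}$ is forward-invariant with $u$ strictly decreasing there, which drives $Q'\to-\infty$ and $Q$ to $0$, contradicting $q^*\notin\mathcal{N}_-$; the same invariance shows every $q$ with $Q_q''(0)<0$ already lies in $\mathcal{N}_-$, disposing of your worry about $u(0)\ge 0$.

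There are, however, two genuine gaps. First, the separatrix argument only guarantees that $Q_{q^*}$ does not exit the region $\{Q>0,\,Q'<0\}$ transversally through either face; it does not exclude a corner exit $Q(t_1)=Q'(t_1)=0$ at finite $t_1$, and a point in neither open set could be exactly such a degenerate trajectory, in which case $q^*$ yields no global solution. This must be ruled out by hand; it is the analogue of the paper's Lemma \ref{touch x axis tangentially}, whose proof exploits $F(y)/y\to+\infty$ as $y\to 0^-$, and a comparable estimate is needed for your dichotomy. Second, your openness claims rest on ``continuous dependence on the initial data,'' but the vector field fails to be locally Lipschitz as $Q'$ approaches $0^-$, precisely because $F'(y)=(C')^{-1}(y)\to+\infty$ there; openness of $\mathcal{N}_+$ therefore needs an additional quantitative argument near the exit time (the paper circumvents this by working with the Lipschitz linear extension $\tilde F$ of \eqref{Tilde F} and reverting to $F$ only on $\{Q'\le-\delta\}$). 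The remaining sketched steps (large $q$ lands in $\mathcal{N}_+$, global existence, the bound on $Q'$, decay at infinity) correspond to Lemma \ref{large r case} and Proposition \ref{exist r^*} and are fillable along the same lines.
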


To tackle the HJB, we introduce the Legendre-Fenchel transform $F$ defined on $\mathbb{R}$ by 
\begin{equation}
    F(y) = \sup_{u\geq 0} \{uy-C(u)\}, 
    \label{Legendre transform}
\end{equation}
for all $y\in\mathbb{R}$, where $C(\cdot)$ is the non-trivial cost function satisfying Assumption \ref{assumption4} (i). 
This transformation will help us to reformulate the formal HJB equation associated with the DCP in later discussions and also contribute to the solution to this HJB equation. 
Hence, we exhibit its explicit expression under the setting of our cost functions and demonstrate its derivative.  

Using the definition of the Legendre transform \eqref{Legendre transform} and in conjunction with the properties of running cost function $C(\cdot)$, we obtain
\begin{equation}
    F(y) =
    \begin{cases}
         +\infty, & \text{ if } y>0, \\
         0, & \text{ if } y=0, \\ 
         y(C')^{-1}(y)-C((C')^{-1}(y)), & \text{ if } y <0. 
    \end{cases}
\label{Legendre transform in our case C}
\end{equation}
One can easily observe that $F$ is well-defined on $\mathbb{R}$, $F$ is finite and convex on the interval $(-\infty, 0]$, $F(y)$ is non-decreasing for $y\in[C'(0), 0]$, and non-increasing for $y \in (-\infty, C'(0)]$. 
Moreover, $\lim\limits_{y\to0^-}F(y)=0$ and $F$ is differentiable on $(-\infty, 0)$. 
It is worth mentioning that when $y < 0$, it is more complicated than it looks. 
To examine it more closely, it suffices to compare the values of $uy$ and $C(u)$ when $y < 0$. 
Notice that when $y\in[C'(0), 0)$, the Legendre transform $F(y)$ can be computed as in \eqref{Legendre transform in our case C}, and it is strictly negative. 
When $y> C'(0)$, the linear line $uy$ passes through $C(u)$ tangentially at $u_0$, i.e., there exists a $u_0 < 0$ such that $u_0 C'(u_0) = C(u_0)$. 
For $y\in[C'(u_0), C'(0))$, we can also compute the Legendre transform in the same manner, which is negative and, more precisely, $F(C'(u_0)) = 0$. 
When $y < C'(u_0)$, there may exist two intersections of $uy$ and $C(u)$, which leads to a strictly positive $F(y)$ for such $y$ values. 
Therefore a universal sign of $y(C')^{-1}(y)-C((C')^{-1}(y))$ is undetermined in the case of $y < 0$. 

\begin{remark}
    The Legendre transform under the assumption of the cost function, especially Assumption \ref{assumption4} (i), may result in two cases. First, for the cost function that is polynomial decay on the negative part, its Legendre transform is exhibited in \eqref{Legendre transform in our case C}. Second, if the cost function is additionally assumed to be linear on the negative part, we have $F(y) = +\infty$ for $y < C'(0)$, which will significantly simplify our model. 
    To ensure generalizability and ease of analysis, we may assume the cost function admits a general non-linear decaying profile. However, this is not a restrictive assumption since one may always perform a smooth linear extension so that $F(y)$ is non-increasing and differentiable on $y\leq C'(0)$. 
\end{remark}

To differentiate $F$, we consider an $u$-valued function $f(u) = uy-C(u)$ defined on $u\in\mathbb{R}$ with $y\in(-\infty, 0]$. Since $C(\cdot)$ is defferentiable, we have $f'(u) = y-C'(u)$. Since $f''(u) = -C''(u)$ and together with the convexity of $C(\cdot)$, we conclude that $f(\cdot)$ is concave. If we solve $f'(u) = 0$ for $u$, we could obtain a local maximum of $f$ and $y=C'(u)$. 
The invertibility of $C'(\cdot)$ implies that the supremum of $f$ is achievable at the critical point $u = (C')^{-1}(y)$ for $y\in(-\infty, 0]$. 
For brevity, we introduce $h(y) = (C')^{-1}(y)$ for all $y\in(-\infty, 0]$. 
Then, the Legendre transform can be rewritten as
\begin{equation*}
    F(y) = y(C')^{-1}(y)-C((C')^{-1}(y)) = yh(y) - C(h(y)), 
\end{equation*}
for all $y\in(-\infty, 0]$. 
We can take the derivative of $F$ with respect to $y$ to obtain
\begin{equation*}
    F'(y) = h(y) + yh'(y) - C'(h(y))h'(y).
\end{equation*}
Then, substituting $h(\cdot)$ by $(C')^{-1}(\cdot)$, we obtain $F'(y) = (C')^{-1}(y)$ for $y\in(-\infty, 0]$. 
Therefore, the derivative of $F$ is finite on $(-\infty, 0)$ and it is given by
\begin{equation}
    F'(y) = \begin{cases}
             (C')^{-1}(y) < 0, & \text{ if }  C'(0)<y < 0, \\
             0, & \text{ if } y = C'(0), \\
             (C')^{-1}(y) > 0, & \text{ if } y< C'(0) ,
    \end{cases}
\label{F'}
\end{equation}
for all $y<0$. Notice that $\lim\limits_{y\to0^-} F'(y) = \infty$, $F'$ is non-decreasing, and continuous on $(-\infty, 0)$. 
In addition, we have $F(y) = yF'(y) - C(F'(y))$ for $y \leq 0$.

To get a better understanding of the Legendre transform, we provide some concrete examples and compute their Legendre transform. 
We also present two counterexamples.

\begin{example}\label{example 1}
    Consider function $C(x) = (x - 1)^2 + 1$ for $x\leq 0$ and $C(x) = 2e^{-x}$ for $x > 0$. 
    Notice that this function is twice continuously differentiable and satisfies Assumption \ref{assumption4} (i). 
    It is straightforward to compute the Legendre transform: 
    \begin{equation}\label{eq: F example}
    F(y) = 
    \begin{cases}
        +\infty, & \text{ if } y > 0, \\
        0, & \text{ if } y = 0, \\
        -y \ln(-\frac{y}{2}) + y, & \text{ if } -2\leq y < 0, \\
        \frac{y^2}{4} + y - 1, & \text{ if } y < -2. 
    \end{cases}
    \end{equation}
    One may easily visualize this piecewise function as in Figure \ref{fig: example Legendre transform of polynomial + exponential}. 
    Moreover, its derivative is 
    \[
    F'(y) = 
    \begin{cases}
        -\ln(-\frac{y}{2}), & \text{ if } -2\leq y < 0, \\
        \frac{y}{2} + 1, & \text{ if } y < -2. 
    \end{cases}
    \]
    \begin{figure}[h!]
        \centering
    \begin{tikzpicture}
        \begin{axis}[
            axis lines=middle,
            xlabel=$y$,
            ylabel=$F(y)$,
            xmin=-6, xmax=1,
            ymin=-2.2, ymax=1,
            xtick={-2},
            ytick={0},
            domain=-5:0,
            samples=200
        ]
        \addplot[thick, domain=-6:-2] {x^2/4 + x - 1};
        
        \addplot[thick, domain=-2:0] {-x*ln(-x/2) + x};
        
        \addplot[red, dashed] coordinates {(-2,-5) (-2,1)};
        \node at (axis cs:-2.2,-4.5) [anchor=north west] {$x = -2$};
        
        \end{axis}
        \end{tikzpicture}
        \caption{Graph of the Legendre transform $F(y)$ of $C(x)$ for $y < 0$ in \eqref{eq: F example}}
        \label{fig: example Legendre transform of polynomial + exponential}
    \end{figure}
\end{example}

\begin{example}\label{example 2}
    Consider function $C(x) = -x+1$ for $x < 0$ and $C(x) = h(x)$ for some $h(x)$ when $x\geq 0$. Since we are interested in the negative part of the cost function, we may assume the positive part takes an arbitrary $h(x)$ as long as it satisfies Assumption \ref{assumption4} (i). 
    It is Legendre transform is
    \[
    F(y) = 
    \begin{cases}
        +\infty, & \text{ if } y > 0, \\
        0, & \text{ if } y = 0, \\
        y (h')^{-1}(y) - h((h')^{-1}(y)), & \text{ if } -1\leq y < 0, \\
        +\infty, & \text{ if } y < -1. 
    \end{cases}
    \]
    Its derivative only exists for $y\in(-1, 0)$. 
    Note that $C(x)$ in this example linearly decays when $x < 0$, and due to the function profile, the Legendre transform is non-decreasing, well-defined on $y\in[-1, 0]$ and differentiable on $(-1, 0)$. 
\end{example}

\begin{example}\label{example 3}
    Consider function $C(x) = e^{-\beta x}$, where $\beta\geq 0$ is a constant. 
    One can easily compute its Legendre transform:  
    \begin{equation*}
        F(y) = \begin{cases}
                 +\infty, & \text{ if } y>0, \\
                 0, & \text{ if } y=0, \\
                 \frac{y}{\beta}\left(1-\ln{\left(-\frac{y}{\beta}\right)}\right), & \text{ if } y<0.
        \end{cases}
    \label{eq: Legendre transform for exponential}
    \end{equation*}
    Its illustrative diagram for $x<0$ can be found in Figure \ref{fig: example Legendre transform of exponential}. 
    \begin{figure}[h!]
        \centering
        \begin{tikzpicture}
          \begin{axis}[
            domain=-3:2, 
            samples=1000, 
            axis lines=middle,
            xlabel=$y$,
            ylabel=$F(y)$,
            xtick={0},
            ytick={0},
            ymin=-1.1, ymax=1.2, 
            xmin=-2, xmax=0.5, 
            grid=both,
            scale=1
          ]
            \addplot[
              thick,
            ]{2*x*(1 - ln(-2*x))};
          \end{axis}
        \end{tikzpicture}
        \caption{Sample graph of the Legendre transform $F(y)$ of $C(x) = e^{-\beta x}$}
        \label{fig: example Legendre transform of exponential}
    \end{figure}
    Moreover, its derivative on $(-\infty, 0)$ can be computed as $F'(y) = -\frac{1}{\beta} \ln{\left(-\frac{y}{\beta}\right)}$ for $y < 0$. 
    Notice that $C(x)$ does not satisfy the polynomial decaying property, and hence, this cannot be a candidate of cost functions. 
\end{example}

With the help of the Legendre-Fenchel transform introduced in \eqref{Legendre transform} and together with HJB \eqref{formal HJB}, we intend to find a solution $Q(\cdot)$ satisfying
\begin{equation}
\begin{aligned}
     \frac{\sigma^2}{2} Q''(x) - F(Q'(x)) - \theta x Q'(x) - \alpha Q(x)  &= 0, \\
    Q'(0) &= -p, \\
    Q'(x)&<0, 
    \label{rewritten HJB}
\end{aligned}
\end{equation}
for all $x\geq0$. 
The condition $Q'(x)<0$ guarantees $F(Q'(x))$ is finite. 
Here, we mainly consider a non-linear cost function $C(x)$ when $x < 0$. 
If there exists a solution to \eqref{rewritten HJB}, then it is straightforward to show that it is also a solution to the HJB, \eqref{formal HJB}. 
Therefore, it suffices to tackle \eqref{rewritten HJB} for the rest of our discussions in Theorem \ref{A solution to the HJB equation}.
Observe that the second-order ordinary differential equation introduced in \eqref{rewritten HJB} only has one initial condition. The other underlying condition is free to vary, which leads to many requirements for the region in which the problem is to be solved. Hence, one can briefly consider this problem as an analogous free-boundary problem. The proof in Appendix \ref{Appendix B} mainly employs a parameterized equation with a parameter related to the underlying condition. 

\subsection{Optimal Solution of the DCP}
\label{Subsec2 of Sec3 in CH4: Optimal Solution of the DCP}

This section is devoted to addressing the optimality of the DCP utilizing the formal HJB. 
First, we exhibit a verification lemma (see Lemma \ref{verifiction lemma}), which provides a lower bound of the value function and identifies an optimal strategy. Then, we demonstrate that this lower bound is achievable by taking a special control, which turns out to be the optimal control (see Theorem \ref{An optimal control}). 




\begin{lemma}[Verification Lemma]
Let $Q$ be a bounded twice continuously differentiable solution to equation \eqref{rewritten HJB} on $[0, \infty)$ which satisfies $-M\leq Q'(x)<0$ for $x\in[0, \infty)$ for some constant $M$. 
Then $Q$ is a lower bound for the value function $V$ such that 
\begin{equation}
    V(x) \geq Q(x) \ \text{ for all } x\in[0, \infty). 
\end{equation}
\label{verifiction lemma}
\end{lemma}

\textbf{Proof.}
Define a function $f(t, x) = e^{-\alpha t} Q(x)$, where $Q$ satisfies the assumptions mentioned above and $\alpha>0$ is a fixed constant. 
Employing the It\^{o}'s formula to obtain
\begin{equation*}
\begin{aligned}
e^{-\alpha t}Q(X(t)) = Q(x) &+ \int_0^{t} e^{-\alpha s} \bigg[\frac{\sigma^2}{2}Q''(X(s)) - u(s)Q'(X(s))\\
&\quad\quad\quad\quad\quad- \theta X(s)Q'(X(s))-\alpha Q(X(s)) \bigg]ds\\
&+ \sigma \int_0^{t} e^{-\alpha s}Q'(X(s))dW(s) + \int_0^t e^{-\alpha s} Q'(X(s))dL(s). 
\end{aligned}
\end{equation*}
Observe that the sufficient smoothness of $Q(\cdot)$ and the boundedness of $Q'(\cdot)$ imply that the stochastic integral term has an expected value of zero since one can obtain 
\begin{equation*}
\begin{aligned}
    E\left[ \int_0^t (\sigma e^{-\alpha s}Q'(X(s)))^2ds \right] &= \sigma^2 E\left[\int_0^t e^{-2\alpha s}({Q'}(X(s)))^2ds \right] \\
    &\leq {M}^2 \sigma^2 \int_0^t e^{-2\alpha s} ds < \infty,  
\end{aligned}
\end{equation*}
which guarantees the expected value of the stochastic integral is equal to zero. 
Now taking expected value on both sides to obtain the following equality:
\begin{equation*}
\begin{aligned}
    E\left[e^{-\alpha t}Q(X(t))\right] = Q(x) &+ E\bigg[\int_0^{t} e^{-\alpha s} \bigg(\frac{\sigma^2}{2}Q''(X(s)) - u(X(s))Q'(X(s))\\
    &\quad\quad\quad\quad\quad\quad\quad  - \theta X(s)Q'(X(s))-\alpha Q(X(s)) \bigg)ds \bigg]\\
    &+ E\left[\int_0^t e^{-\alpha s} Q'(X(s))dL(s) \right]. 
\end{aligned}
\end{equation*}
Since $Q$ satisfies the equation \eqref{rewritten HJB}, we have that for all $u \geq 0$, 
\begin{equation*}
    \frac{\sigma^2}{2} Q''(x) - \theta x Q'(x)  - \alpha Q(x) = F(Q'(x)) \geq uQ'(x) - C(u), 
\end{equation*}
which further implies the following inequality:
\begin{equation*}
    \frac{\sigma^2}{2} Q''(x) -uQ'(x)- \theta x Q'(x)  - \alpha Q(x) \geq  - C(u). 
\end{equation*}
Together with $Q'(0) = -p$ and properties of the local time process $L(\cdot)$, we have
\begin{equation*}
    E\left[e^{-\alpha t}Q(X(t))\right] \geq 
    Q(x) - E\left[\int_0^{t} e^{-\alpha s} C(u(s))ds +p\int_0^t e^{-\alpha s} dL(s) \right]. 
\end{equation*}
Rearranging the above inequality to obtain
\begin{equation*}
    E\left[\int_0^{t} e^{-\alpha s} C(u(s))ds + p\int_0^t e^{-\alpha t} dL(s) \right] \geq 
    Q(x) - E\left[e^{-\alpha t}Q(X(t))\right] . 
\end{equation*}
Let $t$ approach positive infinity. Since $Q$ is bounded, $\lim\limits_{t\to\infty} E[e^{-\alpha t}Q(X(t))] = 0$. 
Moreover, we observe that the left-hand side coincides with the infinite-horizon discounted cost functional $J(x, u, X)$ defined in \eqref{costfunctional}. Therefore, we conclude that $J(x, u, X) \geq Q(x)$ leads to $V(x) \geq Q(x)$ by taking infimum over all non-negative control $u$. 
\hfill $\square$ \\


    
    


Since there exists a function $Q(\cdot)$ by Theorem \ref{A solution to the HJB equation} which is also the lower bound of the value function \eqref{value function} by the verification lemma (see Lemma \ref{verifiction lemma}), we are left to show that this lower bound is achievable by taking a specific control $u^*$, which turns out to be the optimal strategy. 
By the proof of Theorem \ref{A solution to the HJB equation} (iii), it is evident that such a function $Q(\cdot)$ satisfies \eqref{rewritten HJB}.  
Recall that when introducing the Legendre transform, we observed that $F(y) = yF'(y) - C(F'(y))$ for $y\in(-\infty, 0]$. 
We intend to combine these expressions. 
However, we cannot guarantee the behaviors of $Q'(\cdot)$ and $C'(0)$. Notice that in the case of $F(y) = -C(0)$ for $y = C'(0)$, \eqref{rewritten HJB} is still valid since $F'(y)=0$ for such a $y$. 
Similarly, we have an identical expression of $F(y) > 0$ for $y < C'(u_0)$, where $u_0C'(u_0) = C(u_0)$, and \eqref{rewritten HJB} is still valid. 
Therefore, without loss of generality, we can write $F(y) = yF'(y) - C(F'(y))$ for all $y\leq0$. 
Since $Q'(x)<0$ for all $x\in[0, \infty)$, we obtain
\begin{equation}
    \frac{\sigma^2}{2}Q''(x) - \theta xQ'(x) - \alpha Q(x) - Q'(x)F'(Q'(x)) + C(F'(Q'(x))) = 0.
    \label{rewritten HJB with F be substituted}
\end{equation}

Consider a control $u^*(\cdot)$ given by $u^*(x) = F'(Q'(x))$ for all $x\in[0, \infty)$. We can establish an admissible control system $(x, u^*, X^*)$ that satisfies the following equation: 
\begin{equation}
    X^*(t) = x + \sigma W(t) - \int_0^t u^*(X^*(s))ds - \theta\int_0^t X^*(s)ds + L^*(t), 
    \label{optimal X}
\end{equation}
where $W(\cdot)$ is a standard Brownian motion,
$L^*(\cdot)$ is the local time process of $X^*$ at the origin, and the feedback control $u^*(t) = u^*(X^*(t))$ for all $t\geq0$. 
It is easy to observe that since $Q'(x)<0$ for all $x\geq0$ as in Theorem \ref{A solution to the HJB equation}, the feedback control $u^*(\cdot)$ is finite and well defined. 
One can see that there exists a weak solution to the stochastic differential equation \eqref{optimal X} for all $t\geq0$ (for more details, see Appendix \ref{Appendix C}, where an interesting proof using a comparison result is provided). 

Here, we intend to show that the lower bound obtained by the verification lemma (Lemma \ref{verifiction lemma}) can be attained by taking this control $u^*$. 
We employ the It\^{o}'s formula to obtain
\begin{equation*}
\begin{aligned}
    e^{-\alpha t}Q(X^*(t)) = Q(x) &+ \int_0^t e^{-\alpha s} \bigg[\frac{\sigma^2}{2}Q''(X^*_s) - \alpha Q(X^*_s) \\
    &\quad\quad\quad\quad\quad- u^*(s)Q'(X^*_s) - \theta X^*_s Q'(X^*_s)\bigg]ds \\ 
    & +\sigma \int_0^{t} e^{-\alpha s}Q'(X^*_s)dW(s) + \int_0^t e^{-\alpha s} Q'(X^*_s)dL_s^*. 
\end{aligned}
\end{equation*}
Observe that we can substitute \eqref{rewritten HJB with F be substituted} into the above equation to further simplify. Meanwhile, using the boundedness of $Q'(\cdot)$ obtained in Theorem \ref{A solution to the HJB equation} and properties of the local time process $L^*(\cdot)$, we take the expected value on both sides to obtain
\begin{equation*}
\begin{aligned}
    E\left[e^{-\alpha t}Q(X^*(t))\right] & = Q(x) - E\left[\int_0^t e^{-\alpha s}C(F'(Q'(X^*_s)))ds + p\int_0^t e^{-\alpha s} dL_s^*\right].
\end{aligned}
\end{equation*}
Let $t$ tend to infinity. The boundedness of $Q(\cdot)$ guarantees that $\lim\limits_{t\to\infty} E[e^{-\alpha t}Q(X^*(t))] = 0$.  
Therefore, we obtain the following equality: 
\begin{equation*}
\begin{aligned}
    Q(x) &=  E\left[\int_0^{\infty} e^{-\alpha s}C(F'(Q'(X^*_s)))ds + p\int_0^t e^{-\alpha s} dL_s^*\right]\\
    & = E\left[\int_0^{\infty} e^{-\alpha s}\left(C(u^*(s))ds+pdL_s^*\right)\right],  
\end{aligned}
\end{equation*}
which coincides with the cost functional $J(x, u^*, X^*)$ as defined in \eqref{costfunctional}. 
It is straightforward to see that the lower bound can be achieved by assigning the control $u^*$. Consequently, $V(x) = Q(x)$ for all $x\in[0, \infty)$. Since the uniqueness of a smooth solution to \eqref{formal HJB}, $V(\cdot)$ coincides with $Q(\cdot)$ and it satisfies the formal HJB. 
Therefore, any such a solution $Q$ to \eqref{formal HJB} has to be non-negative. 
We summarize the above discussion in the following theorem.  

\begin{theorem}
Let $Q(\cdot)$ be the smooth solution obtained in Theorem \ref{A solution to the HJB equation}. 
Then the ordered triple $(x, u^*, X^*)$ is an optimal admissible control system, where $u^*(t)=F'(Q'(X^*(t)))$ for all $t\geq0$ and a weak solution $X^*$ satisfying $\eqref{optimal X}$ is the optimal state process. 
\label{An optimal control}
\end{theorem}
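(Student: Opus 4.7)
The plan is to observe that almost all of the analytic work needed has already been assembled before the statement of the theorem, so the proof reduces to assembling three ingredients: (i) the admissibility of the candidate system $(x,u^*,X^*)$, (ii) the identity $J(x,u^*,X^*)=Q(x)$ obtained from the It\^o computation and the rewritten HJB equation \eqref{rewritten HJB with F be substituted}, and (iii) the lower bound $V(x)\geq Q(x)$ given by Lemma \ref{verifiction lemma}. Chaining these yields $V(x)\leq J(x,u^*,X^*)=Q(x)\leq V(x)$, so equality holds and $(x,u^*,X^*)$ attains the infimum in \eqref{value function}.

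The main obstacle, and the step that genuinely needs argument beyond what precedes the theorem, is (i): one must exhibit a weak solution $X^*$ of the reflected SDE \eqref{optimal X} with the feedback drift $u^*(x)=F'(Q'(x))$. The difficulty is that $F'$ is only continuous and non-decreasing on $(-\infty,0)$ with a corner at $C'(0)$, so the composition $F'\circ Q'$ need not be Lipschitz and standard strong existence results do not apply directly. My plan is to invoke the comparison-based construction developed in \ref{Appendix C}, which the paper already advertises as giving existence and uniqueness of the admissible control system under exactly these kinds of feedback drifts. The key facts one needs to feed into that construction are the bounds $0\leq F'(y)\leq F'(0^-)$ is not quite true, but using $C'(0)\leq Q'(x)<0$ one gets $0\leq u^*(x)=F'(Q'(x))\leq h(C'(0))$-type bounds (valid because $F'(y)=(C')^{-1}(y)$ is continuous and bounded on $[C'(0),0)$), together with the monotonicity of $F'$ and the smoothness of $Q'$ from Theorem \ref{A solution to the HJB equation}.

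Once existence of $(x,u^*,X^*)$ is in hand, admissibility is immediate: $u^*$ is adapted (it is a measurable function of the state), non-negative, and bounded, so in particular $\int_0^T u^*(s)\,ds<\infty$ a.s. for every $T\geq 0$. With admissibility secured, I would then carry out the verification-style It\^o expansion of $e^{-\alpha t}Q(X^*(t))$ exactly as in the paragraph preceding the theorem: the boundedness of $Q'$ (from Theorem \ref{A solution to the HJB equation}(ii)) kills the Brownian martingale in expectation, the local-time integral collapses via $Q'(0)=-p$ and the support property of $dL$, the identity \eqref{rewritten HJB with F be substituted} transforms the drift terms into exactly $-C(u^*(s))$, and dominated convergence using the boundedness of $Q$ sends $E[e^{-\alpha t}Q(X^*(t))]\to 0$ as $t\to\infty$. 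The upshot is
\begin{equation*}
 Q(x)=E\!\left[\int_0^{\infty} e^{-\alpha s}\bigl(C(u^*(s))\,ds+p\,dL(s)\bigr)\right]=J(x,u^*,X^*).
\end{equation*}

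Finally, combining with Lemma \ref{verifiction lemma}, which is applicable because $Q$ meets its hypotheses by Theorem \ref{A solution to the HJB equation}, one obtains $V(x)\geq Q(x)=J(x,u^*,X^*)\geq V(x)$, whence $(x,u^*,X^*)$ is optimal and $V=Q$. The only truly new work relative to the preceding discussion is citing \ref{Appendix C} for existence/uniqueness of the closed-loop reflected SDE driven by a non-Lipschitz but monotone feedback; everything else is bookkeeping.
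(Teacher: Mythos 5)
Your proposal follows essentially the same route as the paper: existence of the closed-loop state process is delegated to the comparison construction in \ref{Appendix C}, the identity $J(x,u^*,X^*)=Q(x)$ comes from It\^o's formula applied to $e^{-\alpha t}Q(X^*(t))$ together with \eqref{rewritten HJB with F be substituted}, and optimality follows by combining this with Lemma \ref{verifiction lemma}.

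One claim in your admissibility step is wrong and should be repaired, though it does not sink the argument. The feedback $u^*$ is \emph{not} globally bounded: by Proposition \ref{exist r^*}, $Q'=W_{r^*}$ increases to $0$ as $x\to\infty$, and since $\lim_{y\to 0^-}F'(y)=+\infty$, the control $u^*(x)=F'(Q'(x))$ diverges as $x\to\infty$. Your proposed bound is also internally inconsistent, since $h(C'(0))=(C')^{-1}(C'(0))=0$, so ``$u^*\leq h(C'(0))$'' would force $u^*\equiv 0$. What admissibility and the It\^o computation actually require is only what the paper uses: $Q'(x)<0$ for every $x$, so $u^*(X^*(t))$ is finite pointwise, and on each finite horizon the continuity of the paths of $X^*$ keeps $Q'(X^*(\cdot))$ bounded away from $0$, giving $\int_0^T u^*(s)\,ds<\infty$ a.s. With that local-boundedness argument substituted for the false global bound, your proof matches the paper's.
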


\section{Asymptotic Optimality}
\label{Sec3 in CH4: Asymptotic Optimality}

Our goal in this section is to show that the cost functional of the QCP introduced in \eqref{cost functional of QCP} converges to an analogous cost functional associated with the DCP addressed in Section \ref{Sec3 in CH4: Diffusion Control Problem (DCP)}. 
In particular, it converges to the value function of the limiting process described in \eqref{value function}. 
Hence, we can ``translate" the optimal strategy of the DCP to the QCP so that it turns out to be nearly optimal. 
Here, we mainly prove two results: Theorem \ref{Asymptotic lower bound} exhibits that the cost functional of the QCP \eqref{cost functional of QCP} has an asymptotic lower bound, and Theorem \ref{Asysmptotic optimality} reveals an asymptotically optimal sequence of the QCPs where the associated sequence of cost functional converges to a value function of DCP. 

First, under the cost structure introduced in Section \ref{Subsec2 of Sec1 in CH4: Sequence of Queueing Systems}, we rewrite the cost functional of DCP as
\begin{equation}
    J(x, u, X, L) = E\left[\int_0^{\infty} e^{-\alpha t} \left(C(u(X(t))) + p\alpha L(t) \right)dt\right], 
    \label{eq: rewritten cost functional of limiting process}
\end{equation}
where $p>0$ and $\alpha>0$ are fixed constants. 
Note that the local time cost has been rewritten since we have $\int_0^{\infty} \int_t^{\infty} \alpha pe^{-\alpha s}ds dL(t) = \int_0^{\infty}\int_0^s \alpha pe^{-\alpha s}dL(t)ds$ by Fubini-Tonelli theorem, namely, \eqref{costfunctional} and \eqref{eq: rewritten cost functional of limiting process} are equivalent. 
Moreover, we write $J(x, u, X, L)$ instead of $J(x, u, X)$ to emphasize the dependence of $L$. 
Thus, the DCP can be interpreted as finding an optimal $u^*$ such that it minimizes the cost functional $J(x, u, X, L)$. It is worth mentioning that notation-wise, we omit the first component and simply write $J(u, X, L)$ if the initial state $x=0$ and when there is no ambiguity. 

Next, we recall the value function of the diffusion control problem introduced in \eqref{value function}. We call an ordered quadruple $(x, u, X, L)$ on the probability space $(\Omega, \mathcal{F}, P)$ an admissible control system if an $\mathcal{F}_t$-adapted control process $u(\cdot)$ satisfies the Assumptions \ref{assumption2} (iii) and a corresponding state process $X(\cdot)$ obtained from \eqref{limiting process} exists on $[0, \infty)$. Let $\mathcal{A}(x)$ be the collection of all such admissible control systems $(x, u, X, L)$. This set is non-empty since \eqref{limiting process} with $u=0$ has a unique weak solution (see Theorem 5.2.1 in \cite{oksendal2013stochastic}). 
Thus, the corresponding value function of the diffusion control problem could be defined as
\begin{equation}
    V(x) = \inf_{(x, u, X, L)\in \mathcal{A}(x)} J(x, u, X, L). 
    \label{eq: value function with x}
\end{equation}

We present our main results of this section in the following: 
\begin{theorem}[Asymptotic lower bound] 
Let Assumptions \ref{assumption1}-\ref{assumption4} hold and assume $\lim\limits_{n\to\infty}x_n=x$. 
Let $(x_n, u_n, \hat{V}_n, \hat{L}_n)$ be a sequence of QCPs with associated cost functionals $J(x_n, u_n, \hat{V}_n, \hat{L}_n)$ as defined in \eqref{cost functional of QCP}, then we have
\begin{equation}
    \liminf_{n\to\infty} J(x_n, u_n, \hat{V}_n, \hat{L}_n) \geq J(x, u, X, L), 
\end{equation}
where $J(x, u, X, L)$ is the cost functional of the DCP defined in \eqref{eq: rewritten cost functional of limiting process}. Consequently, 
\begin{equation}
    \liminf_{n\to\infty} J(x_n, u_n, \hat{V}_n, \hat{L}_n) \geq V(x)
    \label{eq: liminf J geq V(x)}
\end{equation}
holds, where $V(x)$ is the value function of the DCP defined in \eqref{eq: value function with x}. 
\label{Asymptotic lower bound}
\end{theorem}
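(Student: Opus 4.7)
The plan is to invoke the weak-convergence result in Proposition \ref{diffusion limit}, combined with Skorokhod representation, in order to pass to the limit $n\to\infty$ inside each of the two pieces of the cost functional $J(x_n, u_n, \hat{V}_n, \hat{L}_n)$ separately. By the Skorokhod representation theorem I may assume, on a common probability space, that $(\hat{V}_n, \hat{L}_n) \to (X, L)$ almost surely in $D^2[0,\infty)$; since $(X, L)$ has continuous sample paths, this upgrades to uniform convergence on every compact subinterval, and in particular $\hat{V}_n(t) \to X(t)$ and $\hat{L}_n(t) \to L(t)$ a.s.\ for each fixed $t \geq 0$.

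For the control-cost piece, Assumption 2(iii) gives $u_n \to u$ uniformly on compact sets with $u$ locally Lipschitz, so that
\[
    u_n(\hat{V}_n(t)) - u(X(t)) = \bigl[u_n(\hat{V}_n(t)) - u(\hat{V}_n(t))\bigr] + \bigl[u(\hat{V}_n(t)) - u(X(t))\bigr] \to 0
\]
a.s.\ for every $t \geq 0$. Continuity of $C$ together with the uniform bound $0 \le C \le M$ from Assumption 4(i) then allows the dominated convergence theorem (with majorant $M e^{-\alpha t}$ on the product measure $dP\otimes dt$) and Fubini to yield
\[
    \lim_{n\to\infty} E\!\int_0^\infty e^{-\alpha t} C(u_n(\hat{V}_n(t)))\,dt = E\!\int_0^\infty e^{-\alpha t} C(u(X(t)))\,dt.
\]

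For the local-time piece I would first integrate by parts; since $\hat{L}_n$ is continuous and nondecreasing and Lemma \ref{second moment bound of L} implies $e^{-\alpha T}\hat{L}_n(T)\to 0$ in $L^2$, a standard manipulation gives
\[
    E\!\int_0^\infty e^{-\alpha t} p\,d\hat{L}_n(t) = p\alpha\!\int_0^\infty e^{-\alpha t} E[\hat{L}_n(t)]\,dt,
\]
and analogously for $L$. For each fixed $t$, $\hat{L}_n(t)\to L(t)$ a.s.\ and Lemma \ref{second moment bound of L} gives $\sup_n E[\hat{L}_n(t)^2] < \infty$, so $\{\hat{L}_n(t)\}_n$ is uniformly integrable and $E[\hat{L}_n(t)] \to E[L(t)]$. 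The same lemma supplies a deterministic majorant $e^{-\alpha t}\sqrt{C_0 + C(1+t^{2(m+1)})}$ which is integrable on $[0,\infty)$, so dominated convergence in $t$ lets me pass the limit inside the outer integral and conclude convergence to $E\!\int_0^\infty e^{-\alpha t} p\alpha L(t)\,dt$.

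Summing the two limits gives $\lim_n J(x_n, u_n, \hat{V}_n, \hat{L}_n) = J(x, u, X, L)$, which is strictly stronger than the claimed $\liminf$ inequality. The consequence \eqref{eq: liminf J geq V(x)} is then immediate because the limiting tuple $(x, u, X, L)$ is an admissible control system in $\mathcal{A}(x)$, so $V(x) \le J(x, u, X, L)$ by definition of the value function. The main obstacle is the tail behaviour of the local-time term as $t\to\infty$: the Stieltjes integral against $d\hat{L}_n$ runs on the whole half-line with an $n$-dependent integrand of unbounded variation, so one must route the argument through integration by parts and rely on the polynomial $L^2$-bound of Lemma \ref{second moment bound of L} (which is precisely why the technical hypotheses in Assumption 4 are imposed) both to secure the vanishing of the boundary term at $T=\infty$ and to produce the integrable majorant needed for Fubini together with dominated convergence.
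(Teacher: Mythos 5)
Your proof is correct, and for the first inequality it actually establishes more than the theorem asks: you prove $\lim_n J(x_n,u_n,\hat V_n,\hat L_n)=J(x,u,X,L)$, whereas the statement only requires a $\liminf$ bound. The paper takes a lighter route for exactly this reason: after the same Skorokhod-representation step and the same integration by parts (justified there by Fubini--Tonelli, which sidesteps any need to control the boundary term $e^{-\alpha T}\hat L_n(T)$ pathwise), it simply applies Fatou's lemma separately to the non-negative integrands $e^{-\alpha t}C(u_n(\hat V_n(t)))$ and $e^{-\alpha t}\hat L_n(t)$, having first checked $u_n(\hat V_n)\to u(X)$ via the continuous mapping theorem. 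In particular the paper's lower-bound argument never invokes Lemma \ref{second moment bound of L} or the moment hypotheses of Assumption 4 at this stage; your argument leans on the second-moment bound (for uniform integrability of $\hat L_n(t)$ and the integrable majorant $e^{-\alpha t}\sqrt{C_0+C(1+t^{2(m+1)})}$) and on the bound $C\le C(0)$ for dominated convergence. What your stronger machinery buys is precisely the content of the paper's Theorem \ref{Asysmptotic optimality}, where full convergence of the cost functional is needed and where the paper deploys the same dominated-convergence and uniform-integrability arguments you give here. So your write-up is essentially a merged proof of Theorems \ref{Asymptotic lower bound} and \ref{Asysmptotic optimality}; it is valid, just not the most economical path to the $\liminf$ inequality alone.
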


Here, Theorem \ref{Asymptotic lower bound} provides an asymptotic lower bound $V(x)$ for the cost functional of the queue control problem, $J(x_n, u_n, \hat{V}_n, \hat{L}_n)$. We are left to show that this lower bound is achievable by taking a special feedback control $u^*$, which solves the DCP. 

\begin{theorem}[Asymptotic optimality]
Let Assumptions \ref{assumption1}-\ref{assumption4} hold and assume $\lim\limits_{n\to\infty}x_n=x$. 
Let $\{(x_n, u_n^*, \hat{V}_n^*, \hat{L}_n^*)\}_{n\geq 1}$ be a sequence of QCPs with associated cost functional $J(x_n, u_n^*, \hat{V}_n^*, \hat{L}_n^*)$ defined in \eqref{cost functional of QCP}. Then 
\[\lim\limits_{n\to\infty} J(x_n, u_n^*, \hat{V}_n^*, \hat{L}_n^*) = V(x), 
\]
where $V(x)$ is the value function of the DCP described in \eqref{eq: value function with x} and $u^*$ is the optimal control obtained in Theorem \ref{An optimal control}. Moreover, the feedback control $u_n^*(\cdot)\equiv u^*(\cdot)$ is asymptotically optimal. 
\label{Asysmptotic optimality}
\end{theorem}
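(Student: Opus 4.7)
The plan is to show that $\lim_{n\to\infty} J(x_n, u^*, \hat{V}_n^*, \hat{L}_n^*) = V(x)$ by sandwiching. The liminf half is immediate from Theorem \ref{Asymptotic lower bound} applied to the sequence $\{(x_n, u^*, \hat{V}_n^*, \hat{L}_n^*)\}$, which yields $\liminf_{n\to\infty} J(x_n, u^*, \hat{V}_n^*, \hat{L}_n^*) \geq V(x)$. So the genuine work is to establish the matching upper bound $\limsup_{n\to\infty} J(x_n, u^*, \hat{V}_n^*, \hat{L}_n^*) \leq V(x)$, and then to identify this limit with $V(x)$ via the optimality of $u^*$ from Theorem \ref{An optimal control}.

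For the upper bound I would first invoke Proposition \ref{diffusion limit} with $u_n \equiv u^*$. Since $u^* = F' \circ Q'$ is non-negative, continuous, and locally Lipschitz by the regularity of $Q$ established in Theorem \ref{A solution to the HJB equation} together with the smoothness of $F'$ on $(-\infty, 0)$, Assumption 2(iii) is met with limiting function $u(\cdot) = u^*(\cdot)$. The proposition therefore gives weak convergence $(\hat{V}_n^*, \hat{L}_n^*) \Rightarrow (X^*, L^*)$ in $D^2[0, \infty)$, where $(X^*, L^*)$ is the optimal diffusion pair solving \eqref{optimal SDE}. By the Skorokhod representation theorem I may pass to a common probability space on which the convergence is almost sure, and since the limit $(X^*, L^*)$ has continuous sample paths, this sharpens to the pointwise convergence $\hat{V}_n^*(t) \to X^*(t)$ and $\hat{L}_n^*(t) \to L^*(t)$ for every $t \geq 0$ almost surely.

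Rewriting $J$ in the integration-by-parts form \eqref{eq: rewritten cost functional of QCP}, I would then treat the two pieces separately. The control-cost integrand $e^{-\alpha t} C(u^*(\hat{V}_n^*(t)))$ is uniformly bounded by $e^{-\alpha t} C(0)$ and converges pointwise to $e^{-\alpha t} C(u^*(X^*(t)))$ by continuity of $C \circ u^*$, so a double application of dominated convergence (inner in $t$, outer in $\omega$) yields convergence of the expected control cost. The idleness term is the genuine obstacle, since $\hat{L}_n^*(t)$ is not uniformly bounded and need not inherit uniform integrability from mere weak convergence. Here I invoke Lemma \ref{second moment bound of L}: splitting the integral at a large threshold $T$, the piece $\int_0^T e^{-\alpha t} \hat{L}_n^*(t)\, dt$ is dominated by $\alpha^{-1} \hat{L}_n^*(T)$, whose second moments are uniformly bounded in $n$ by the lemma, hence this family is uniformly integrable and its expectation converges to the analogous integral for $L^*$. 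For the tail, Cauchy--Schwarz combined with Lemma \ref{second moment bound of L} gives
\begin{equation*}
E\!\left[\int_T^\infty e^{-\alpha t} \hat{L}_n^*(t)\, dt\right] \leq \int_T^\infty e^{-\alpha t} \sqrt{C_0 + C\bigl(1 + t^{2(m+1)}\bigr)}\, dt,
\end{equation*}
and the polynomial-versus-exponential competition forces this bound to $0$ as $T \to \infty$ uniformly in $n$. Sending $n \to \infty$ followed by $T \to \infty$ therefore produces $\limsup_n J(x_n, u^*, \hat{V}_n^*, \hat{L}_n^*) \leq J(x, u^*, X^*, L^*)$, and the latter equals $V(x)$ by Theorem \ref{An optimal control}. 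The chief obstacle is exactly this tail control on the scaled local time, which is precisely why Lemma \ref{second moment bound of L} was established beforehand.
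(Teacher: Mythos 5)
Your argument is correct and reaches the same conclusion using the same core ingredients as the paper (Proposition~\ref{diffusion limit} plus Skorokhod representation for almost-sure path convergence, dominated convergence with the envelope $e^{-\alpha t}C(0)$ for the control-cost term, and Lemma~\ref{second moment bound of L} as the engine behind the idleness term), but the packaging is genuinely different in two places. First, you phrase the result as a sandwich, invoking Theorem~\ref{Asymptotic lower bound} for the $\liminf$ half and then establishing only the matching $\limsup$; the paper instead derives the full limit $J(x_n,u_n^*,\hat V_n^*,\hat L_n^*)\to J(x,u^*,X^*,L^*)$ directly, which your estimates actually also deliver, so the sandwich framing costs you nothing but is slightly redundant. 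Second, and more substantively, you handle the idleness term by a near-field/far-field split at a threshold $T$: on $[0,T]$ you dominate $\int_0^T e^{-\alpha t}\hat L_n^*(t)\,dt$ by $\alpha^{-1}\hat L_n^*(T)$, which has uniformly bounded second moments by the lemma and hence is uniformly integrable, while for the tail you use Cauchy--Schwarz against the polynomial second-moment bound so that $\int_T^\infty e^{-\alpha t}\sqrt{C_0+C(1+t^{2(m+1)})}\,dt \to 0$ uniformly in $n$. The paper instead observes in one stroke that
\begin{equation*}
E\!\left[\int_0^{\infty} e^{-\alpha t}\hat L_n^*(t)^2\,dt\right] \leq C_0\int_0^{\infty}e^{-\alpha t}\,dt + C\int_0^{\infty}e^{-\alpha t}\bigl(1+t^{2(m+1)}\bigr)\,dt < \infty
\end{equation*}
uniformly in $n$, which via Jensen gives a uniform bound on $E\bigl[(\int_0^\infty e^{-\alpha t}\hat L_n^*(t)\,dt)^2\bigr]$ and hence uniform integrability of the whole family at once. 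Both routes are valid; yours is more explicit about where the exponential discounting beats the polynomial growth, while the paper's is more compact. One small caveat applying to both your write-up and the paper's: taking $u_n^*\equiv u^*$ implicitly requires that the induced intensities $\lambda_n^*(y)=1-u^*(\sqrt n\,y)/\sqrt n$ remain within the band $(\epsilon_0,1)$ of Assumption~2(i), which is nontrivial since $u^*=F'\circ Q'$ is unbounded as $x\to\infty$ (because $Q'(x)\uparrow 0$ and $F'(y)\to\infty$ as $y\to 0^-$); this admissibility check is glossed over in the paper as well, so it is not a deficiency specific to your proposal.
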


The proofs of these theorems need some preliminary results. 
The rest of this section is devoted to their proofs and discussions. 
To deduce the asymptotic optimality,
we intend to simplify the scaled state process \eqref{queueing systems}. 
We denote
\begin{equation}
    \xi_n(t) := \hat{A}_n(t) + \hat{M}_n^v(\bar{A}_n(t)) - \hat{M}_n^d(\bar{A}_n(t)), 
    \label{Eq: xi_n}
\end{equation}
and
\begin{equation}
    \epsilon_n(t) := \frac{1}{\sqrt{n}}\int_0^t F_n\left(\frac{\hat{V}_n(s-)}{\sqrt{n}}\right)dA_n(s) - \int_0^t H(\hat{V}_n(s))ds, 
    \label{Eq: epsion_n(t)}
\end{equation}
for all $t\geq0$.
Thanks to Lemma 4.5 and Lemma 4.7 in \cite{lee2011convergence}, we can simply assume that $\xi_n(\cdot)$ converges weakly to $\sigma W(\cdot)$ in $ D[0, \infty)$ as $n\to\infty$ and $\|\epsilon_n\|_T\to 0$ in probability as $n\to\infty$. Hence, \eqref{queueing systems} can be rewritten as
\begin{equation}
    \hat{V}_n(t) = x_n + \xi_n(t) - \epsilon_n(t) - \int_0^t u_n(\hat{V}_n(s))ds - \int_0^t H(\hat{V}_n(s))ds + \hat{L}_n(t),
    \label{simplified queue}
\end{equation}
where $u_n(x)=\sqrt{n}\left(1-\lambda_n\left(\frac{x}{\sqrt{n}}\right)\right)$ for all $x\geq0$, $\xi_n(\cdot)$ and $\epsilon_n(\cdot)$ are defined in \eqref{Eq: xi_n} and \eqref{Eq: epsion_n(t)}, and they also satisfy above convergent assumptions.

Next, we intend to show that the second moment bound of the $\hat{L}_n$ process exists, and it is finite for each $n\geq0$. 
This result provides a simple approach to prove the uniform integrability of proper integrands in the asymptotic optimality result in Theorem \ref{Asysmptotic optimality}. Moreover, without loss of generality, we can simply assume that $x_n\equiv x$ and it is deterministic for the rest of this section. 

\begin{lemma}
Consider the $n$th system $(\hat{V}_n, \hat{L}_n)$ which satisfies the Assumptions \ref{assumption1}-\ref{assumption4}. We also assume $x_n$ converges to $x$ as $n\to\infty$. Then, we conclude that 
\begin{equation}
    E\left[\hat{L}_n(T)^2\right] \leq C_0 +  C(1+T^{2(m+1)}),
    \label{second moment of L}
\end{equation}
for all $T>0$, where $C_0>0$ and $C > 0$ are constants independent of $n$ and $m>2$. 
\label{second moment bound of L}
\end{lemma}

\textbf{Proof.}
Let the Skorokhod map $\Gamma_0: D[0, \infty)\mapsto D[0, \infty)$ (see \cite{kruk2007explicit}):
\begin{equation}
    \Gamma_0(\phi)(t) = \phi(t) + \sup_{s\in[0, t]} [-\phi(s)]^+, 
    \label{Skorokhod map}
\end{equation}
where $\phi(t)\in  D[0, \infty)$ for all $t\geq0$. We further introduce a process $\hat{Z}_n(t) = \hat{V}_n(t)-\hat{L}_n(t)$ for all $t\geq0$, where $\hat{V}_n(\cdot)$ and $\hat{L}_n$ are defined in \eqref{queueing systems}. Thus we can obtain the Skorokhod decomposition $(\hat{V}_n, \hat{L}_n)$ of $\hat{Z}_n$ (see \cite{kruk2007explicit} or Section 4.1 in \cite{lee2011convergence}) and $\Gamma_0(\hat{Z}_n)(t) = \hat{V}_n(t)$ for all $t\geq0$. Since the Skorokhod map has Lipschitz property (see \cite{kruk2007explicit}), i.e., $\sup_{t\in[0, T]}|\Gamma_0(\phi)(t)|\leq 2\|\phi\|_T$, we can obtain the following inequality:
\begin{equation*}
\begin{aligned}
    E[\hat{L}_n(T)^2] &= E\left[\|\hat{L}_n\|_T^2\right] 
    = E\left[\|\Gamma_0(\hat{Z}_n)-\hat{Z}_n\|_T^2\right] \\
    &\leq 2 E\left[\|\Gamma_0(\hat{Z}_n)\|_T^2+\|\hat{Z}_n\|_T^2\right]\\
    &\leq 8 E\left[\|\hat{Z}_n\|_T^2\right] + 2 E\left[\|\hat{Z}_n\|_T^2\right] \\
    &= 10 E\left[\|\hat{Z}_n\|_T^2\right]. 
\end{aligned}
\end{equation*}
Observe that the Skorokhod map provides a straightforward approach in proving the second moment bound. Therefore, it suffices to show the second moment bound of $\hat{Z}(\cdot)$ is finite. Since $\hat{Z}_n(t) = \hat{V}_n(t)-\hat{L}_n(t)$ for all $t\geq0$ and by \eqref{simplified queue}, we have 
\begin{equation}
    \hat{Z}_n(t) = x_n + \xi_n(t) - \epsilon_n(t) - \int_0^t u_n(\Gamma_0(\hat{Z}_n)(s))ds - \int_0^t H(\Gamma_0(\hat{Z}_n)(s))ds, 
    \label{Z_n}
\end{equation}
where $\xi_n(\cdot)$, $\epsilon_n(\cdot)$, and $u_n(\cdot)$ are defined in \eqref{simplified queue}. Using the expressions of $\epsilon_n(\cdot)$ and $u_n(\cdot)$ defined above and assumption $x_n\equiv x$, \eqref{Z_n} can be rewritten as the following: 
\begin{equation*}
\begin{aligned}
    \hat{Z}_n(t) &= x + \xi_n(t) -\frac{1}{\sqrt{n}} \int_0^t F_n \left(\frac{\Gamma_0(\hat{Z}_n)(s-)}{\sqrt{n}}\right) dA_n(s) 
    +\sqrt{n} \int_0^t \left[\lambda_n\left(\frac{\Gamma_0(\hat{Z}_n)(s)}{\sqrt{n}}\right)-1\right]ds, 
\end{aligned}
\end{equation*}
By Lemma 6.4 and Proposition 6.10 in \cite{lee2011convergence}, we can easily obtain the second moment bound of $\hat{Z}_n(\cdot)$:
\begin{equation}
    E\left[\|\hat{Z}_n(t)\|_T^2\right] \leq 8x^2 + M(1+T^{2(m+1)}), 
    \label{second moment of hat Zn}
\end{equation}
where $x\geq0$ and $M>0$ are some constants independent of $n$, $T$, and $m>2$. Consequently, we have
\begin{equation}
    E\left[\hat{L}_n(T)^2\right] \leq C_0+ C(1+T^{2(m+1)}),
    \label{second moment of L}
\end{equation}
for $T>0$, where $C_0=80 x^2$ and $C=10 M > 0$ are constants independent of $n$ and $m>2$. 

\hfill $\square$ \\


Now, we are ready to present the proof of Theorem \ref{Asymptotic lower bound}. 

\textbf{Proof of Theorem \ref{Asymptotic lower bound}.}
Since we have assumed that $x_n\equiv x$ and Proposition \ref{diffusion limit} implies that $(\hat{V}_n, \hat{L}_n)$ converges weakly to $(X, L)$ in $D^2[0, \infty)$ and with the help of the Skorokhod's representation theorem (see Theorem 3.2.2 in \cite{whitt2006stochastic}), we can simply assume that $\lim\limits_{n\to\infty}( \hat{V}_n(t), \hat{L}_n(t))  = (X(t), L(t))$ for all $t\geq 0$. Moreover, we obtain that $(X, L)$ is the unique strong solution to stochastic differential equation \eqref{limiting process} by Proposition \ref{diffusion limit}. Thus, $(x, u, X, L)$ is an admissible control, i.e., $(x, u, X, L)\in\mathcal{A}(x)$. 

Consider the cost functional of the QCP, \eqref{cost functional of QCP}, by integration by part, we reduce the last term as the following: 
\begin{equation*}
    p E\left[\int_0^{\infty} e^{-\alpha t} d\hat{L}_n(t)\right] = p\alpha  E\left[\int_0^{\infty} e^{-\alpha t} \hat{L}_n(t)dt\right]. 
\end{equation*}
Substituting the last term, the cost functional of the QCP \eqref{cost functional of QCP} becomes
\begin{equation}
    J(x_n, u_n, \hat{V}_n, \hat{L}_n) = E\left[\int_0^{\infty} e^{-\alpha t} \left(C(u_n(\hat{V}_n(t))) + p\alpha \hat{L}_n(t) \right)dt\right]. 
    \label{eq: rewritten cost functional of QCP}
\end{equation}
Similarly, the cost functional \eqref{costfunctional} could be written as \eqref{eq: rewritten cost functional of limiting process} as discussed above. 
Since $(\hat{V}_n, \hat{L}_n)$ converges to $(X, L)$ a.s. as $n$ tends to positive infinity and non-negativity of $e^{-\alpha t} \hat{L}_n(t)$, the Fatou's lemma renders
\begin{equation}
    \int_0^{\infty} e^{-\alpha t} L(t) dt \leq \liminf_{n\to \infty} \int_0^{\infty} e^{-\alpha t} \hat{L}_n(t)dt. 
    \label{1st term by Fatou}
\end{equation}

Now, we are left to justify the convergence of the first expectation in \eqref{eq: rewritten cost functional of QCP}. 
First, we consider the convergence of the integrand. By applying Lemma 4.6 in \cite{lee2011convergence}, Proposition \ref{diffusion limit}, and continuous mapping theorem, for each $\epsilon>0$, we obtain the following inequalities:
\begin{equation*}
\begin{aligned}
&P\left(\|u_n(\hat{V}_n)-u(X)\|_T>\epsilon\right) \\
&\leq P\left(\|u_n(\hat{V}_n)-u(\hat{V}_n)\|_T+\|u(\hat{V}_n)-u(X)\|_T>\epsilon\right)\\
&\leq P\left(\|u_n(\hat{V}_n)-u(\hat{V}_n)\|_T>\frac{\epsilon}{2}\right) + P\left(\|u(\hat{V}_n)-u(X)\|_T>\frac{\epsilon}{2}\right), 
\end{aligned}
\end{equation*}
which implies $\|u_n(\hat{V}_n)-u(X)\|_T\to 0$ in probability as $n\to\infty$ for any $T>0$. Using the continuous mapping theorem again, we obtain that $C(u_n(\hat{V}_n(t)))$ converges to $C(u(X(t)))$ in probability as $n$ tends to positive infinity and they are both non-negative. Therefore, by the Fatou's lemma, we have
\begin{equation}
    \int_0^{\infty} e^{-\alpha t} C(u(X(t)))dt \leq \liminf_{n\to\infty} \int_0^{\infty} e^{-\alpha t} C(u_n(\hat{V}_n(t)))dt. 
\end{equation}
Hence, together with \eqref{1st term by Fatou}, we conclude that 
\begin{equation}
    \liminf_{n\to \infty} J(x_n, u_n, \hat{V}_n, \hat{L}_n)  = \liminf_{n\to \infty} J(x, u_n, \hat{V}_n, \hat{L}_n) \geq J(x, u, X, L). 
\end{equation}
Since $(x, u, X, L)\in\mathcal{A}(x)$ and $J(x, u, X, L)\geq V(x)$ by the definition of the value function in \eqref{eq: value function with x}, we conclude \eqref{eq: liminf J geq V(x)}. 
This completes the proof. 
\hfill $\square$ \\

With all the above preliminary results in hand, we present the proof of Theorem \ref{Asysmptotic optimality}. 
To demonstrate the lower bound can be attained, we intend to justify the uniform integrability of the appropriate integrands when imposing a special control $u^*$. 



\textbf{Proof of Theorem \ref{Asysmptotic optimality}.}
In the proof of Theorem \ref{Asymptotic lower bound}, we observed that $C(u_n^*(\hat{V}_n^*(t)))$ converges to $C(u^*(X^*(t)))$ in probability as $n \to \infty$ by the continuous mapping theorem in a special probability space. 
When $u_n^*(\cdot) \geq 0$, we observe that the integrand of the first term in \eqref{cost functional of QCP} is dominated by $e^{-\alpha t}C(0)$ as the following:
\begin{equation*}
    |e^{-\alpha t} C(u_n^*(\hat{V}_n^*(t)))| \leq e^{-\alpha t} C(0), 
\end{equation*}
where $C(0)$ is the maximum of the cost function $C(x)$ for all $x\geq0$ as we assumed. We intend to verify the uniform integrability of the dominated function. It is trivial to see that
\begin{equation*}
    \int_0^{\infty} e^{-\alpha t} C(0) dt \leq \frac{C(0)}{\alpha}<\infty. 
\end{equation*}
When $u_n^*(\cdot) < 0$, we observe that 
\[
|e^{-\alpha t} C(u_n^*(\hat{V}_n^*(t)))| \leq e^{-\alpha t} K_1 (1 + |u_n^*(\hat{V}_n^*(t))|^l),
\]
for some $K_1 > 0$ and $l\geq 1$. 
By \eqref{eq: supsup u_n < M}, we have 
\[
    E\left[\int_0^{\infty} e^{-\alpha t}C(u_n^*(\hat{V}_n^*(t)))dt \mathbbm{1}_{[\|V_n^*\|_T\leq \delta_0]}\right]
    \leq K_1 E\left[\int_0^{\infty} e^{-\alpha t} (1 + M^l)dt\right] 
    = \dfrac{K_1}{\alpha} (1+M^l), 
\]
where $M$ is a constant independent of $n$ and $T$ as given in \eqref{eq: supsup u_n < M}. 
Moreover, since $|1 - \lambda_n| \leq C_0 + 1$, we have 
\begin{align*}
    E\left[\int_0^{\infty} e^{-\alpha t}C(u_n^*(\hat{V}_n^*(t)))dt \mathbbm{1}_{[\|V_n^*\|_T > \delta_0]}\right]
    &\leq K_1 E\left[\int_0^{\infty} e^{-\alpha t} (1 + |u_n^*(\hat{V}_n^*(t))|^l)dt \mathbbm{1}_{[\|V_n^*\|_T > \delta_0]}\right] \\
    &\leq K_1 E\left[\int_0^{\infty} e^{-\alpha t} (1 + (\sqrt{n} (C_0 + 1))^l)dt\right] P(\|V_n^*\|_T > \delta_0) \\
    &\leq \frac{K_1}{\alpha} (1 + (C_0 + 1)^l n^{l/2}) \frac{E[\|\hat{V}_n^*\|_T^m]}{\delta_0^m n^{m/2}}, 
\end{align*}
where $\delta_0 > 0$ is given in \eqref{eq: supsup u_n < M}. Here, the last inequality is derived from Chebyshev's inequality. 
By the high-order moment bound result of $\|\hat{V}_n^*\|_T$ in Proposition 6.8 from \cite{lee2011convergence}, and since one can find a $m > l$ such that $\frac{n^{l/2}}{n^{m/2}} < 1$, the left-hand side of the above inequality is bounded above such that
\[
\sup_{n\geq 0} E\left[\int_0^{\infty} e^{-\alpha t}C(u_n^*(\hat{V}_n^*(t)))dt \mathbbm{1}_{[\|V_n^*\|_T > \delta_0]}\right]
\leq \frac{\Tilde{C}(1 + T^{2m})}{\delta_0^m},   
\]
for some constant $\Tilde{C}>0$. 
Let $\delta_0 \to\infty$. One can obtain the uniform integrability. 
Therefore, by applying the generalized dominated convergence theorem, we obtain 
\begin{equation}\label{eq: convergence of C part}
    \lim\limits_{n\to\infty} E\left[\int_0^{\infty} e^{-\alpha t}C(u_n^*(\hat{V}_n^*(t)))dt\right] = E\left[\int_0^{\infty} e^{-\alpha t}C(u^*(X^*(t)))dt\right]. 
\end{equation}

Next, we consider the convergence of the second term in \eqref{cost functional of QCP}. Analogously, identical to the proof of Theorem \ref{Asymptotic lower bound}, using integration by parts, we rewrite the last term as
\begin{equation*}
    \int_0^{\infty} e^{-\alpha t}d\hat{L}_n^*(t) = \alpha\int_0^{\infty} e^{-\alpha t} \hat{L}_n^*(t)dt. 
\end{equation*}
Since Lemma \ref{second moment bound of L} implies that
\begin{equation}
    E\left[\hat{L}_n^*(T)^2\right] \leq C_0+ C(1+T^{2(m+1)}),
    \label{second moment of L}
\end{equation}
for some $T>0$, where $C_0>0$ and $C > 0$ are constants independent of $n$ and $m>l$. Therefore, the Fatou's lemma suggests
\begin{equation}
    E\left[L^*(T)^2\right] \leq \liminf_{n\to\infty} E\left[\hat{L}_n^*(T)^2\right] \leq C_0+ C(1+T^{2(m+1)}), 
\end{equation}
where $C_0>0$, $C > 0$, and $m>2$ are defined in \eqref{second moment of L}. Moreover, we obtain the following inequality: 
\begin{equation*}
    E\left[\int_0^{\infty}e^{-\alpha t} \hat{L}_n^*(t)^2dt\right] \leq C_0\int_0^{\infty} e^{-\alpha t} dt + 
    C \int_0^{\infty} e^{-\alpha t} (1+t^{2(m+1)})dt < \infty, 
\end{equation*}
which yields the uniform integrability of $(\hat{L}_n^*)$. Therefore, let $n\to\infty$, we obtain
\begin{equation*}
\begin{aligned}
\lim\limits_{n\to\infty} E\left[\int_0^{\infty} e^{-\alpha t} d\hat{L}_n^*(t) \right] &= \alpha \lim\limits_{n\to\infty} E\left[\int_0^{\infty} e^{-\alpha t} \hat{L}_n^*(t)dt \right] \\
&=\alpha E\left[\int_0^{\infty} e^{-\alpha t} L^*(t)dt \right]\\
&=E\left[\int_0^{\infty} e^{-\alpha t} dL^*(t) \right]. 
\end{aligned}
\end{equation*}
Together with \eqref{eq: convergence of C part}, we conclude that
\begin{equation*}
    \lim\limits_{n\to\infty} J(x_n, u_n^*, \hat{V}_n^*, \hat{L}_n^*) = \lim\limits_{n\to\infty} J(x, u_n^*, \hat{V}_n^*, \hat{L}_n^*) = J(x, u^*, X^*, L^*). 
\end{equation*}
Moreover, the DCP suggests $J(x, u^*, X^*, L^*)=V(x)$ by Theorem \ref{An optimal control}. 
This completes the proof. 

\hfill $\square$ \\


\section{Investigate Reinforcement Learning in DCP}\label{sec5 RL}


Motivated by recent literature, we intend to investigate Reinforcement Learning (RL) in solving stochastic control problems. 
\cite{quer2022connecting} established a connection between reinforcement learning and the optimal control problem. 
However, the feasibility and stability in other settings are moot. 
To benchmark against the analytical solution obtained from the HJB equation, we leverage reinforcement learning to identify the optimal control policy through simulations. This approach allows us to validate and compare the performance of data-driven methods against theoretically derived solutions. 
It also provides the possibility of utilizing RL in other stochastic control problems of interest in our future projects. 

To solve the stochastic control problem through RL, we construct a Markov Decision Process (MDP) for the discretization of our problem as $(\mathcal{S},\mathcal{U}, R, P,\gamma)$. 
The state space $\mathcal{S}$ consists of two components: $X\in \mathcal{X}\subseteq \mathbb{R}_+$ and $L\in\mathcal{L}\subseteq\mathbb{R}_+$ defined in \eqref{constrol system}. 
We strict the action $U\in \mathcal{U}\subseteq\mathbb{R}$, which is a set containing all the candidates of admission control. The reward function 
$R:\mathcal{S}\times\mathcal{U}\times\mathcal{S}\times\mathcal{U}\rightarrow \mathbb{R}$ reflects the cost structure of the control objective, and it is obtained from discretization: 
\[
    R(s,u,s',u')=\frac{e^{-\alpha t_k}C(u)+e^{-\alpha t_{k+1}}C(u')}{2}d t+p \frac{e^{-\alpha t_k}+e^{-\alpha t_{k+1}}}{2}(l'-l), 
\]
where $s'=\{x',l'\}$ denotes the set of next states and $u'$ represents the next action. 
Here, we use $dt$ to denote a finite time step, despite the abuse of notation, as it is distinct from the symbol $\Delta$ typically used to represent the probability simplex. 
Moreover, the transition function 
$P:\mathcal{S}\times\mathcal{U}\rightarrow \Delta(\mathcal{S})$ is governed by the underlying stochastic system specified as \eqref{constrol system}. 
We set $\gamma\in(0,1)$ as the discount factor of the algorithm to account for the time value of future rewards.
Notice that this discount factor is irrelevant to the system discount factor $\alpha$. 
We employ Example \ref{example 1} as our cost function. 
Applying the policy gradient method to handle the continuous state and actions, we build a policy network to approximate the policy function parametrized by $\theta$ as $\pi: \mathcal{X}\times \mathcal{L}\times \Theta \rightarrow \Delta(\mathcal{U})$. The well-known REINFORCE algorithm is adopted to update the policy parameter $\theta$ using episode samples by the Monte-Carlo method (cf. \cite{sutton1999policy}, \cite{sutton2018reinforcement}). 
With the parameter specified, this enables us to learn the control policy without prior knowledge of the transition $P$. 

    We adopt the following parameters: let $T = 2$, $\sigma = 1$, $\theta = 0.5$, $\alpha = 0.5$, $p = 1.1$, and take $2000$ time steps. 
    Since Theorem \ref{An optimal control} and $p = 1.1$, we realize that $Q'(\cdot)\in[-1.1, 0)$. By Example \ref{example 1}, $F'(y) > 0$ for $y \geq -2$. Therefore, it suffices to assume non-negative candidate controls since $F'(Q'(\cdot)) \geq 0$. 
    To validate the model-free REINFORCE algorithm, it is custom to assume a restriction, i.e., $\mathcal{U} \subseteq [0, 2]$ for ease of computation and simplicity. 
    Further, we let $\gamma =0.99$ in the REINFORCE algorithm and properly tune the algorithm parameters. 
    As a benchmark, we also implement the theoretical solution obtained from Section \ref{Sec3 in CH4: Diffusion Control Problem (DCP)} and the free-boundary problem in Appendix \ref{Appendix B}. 
    We obtain the following Figures \ref{fig: REINFORCE algorithm vs theoretical solution} and \ref{fig: comparison of optimal u}. 

\begin{figure}[h!]
    \centering
    \includegraphics[width=0.7\linewidth]{Images/2_rl_uoptimal.png}
    \caption{REINFORCE algorithm vs theoretical solution}
    \label{fig: REINFORCE algorithm vs theoretical solution}
\end{figure}
\begin{figure}[h!]
    \centering
    \includegraphics[width=0.7\linewidth]{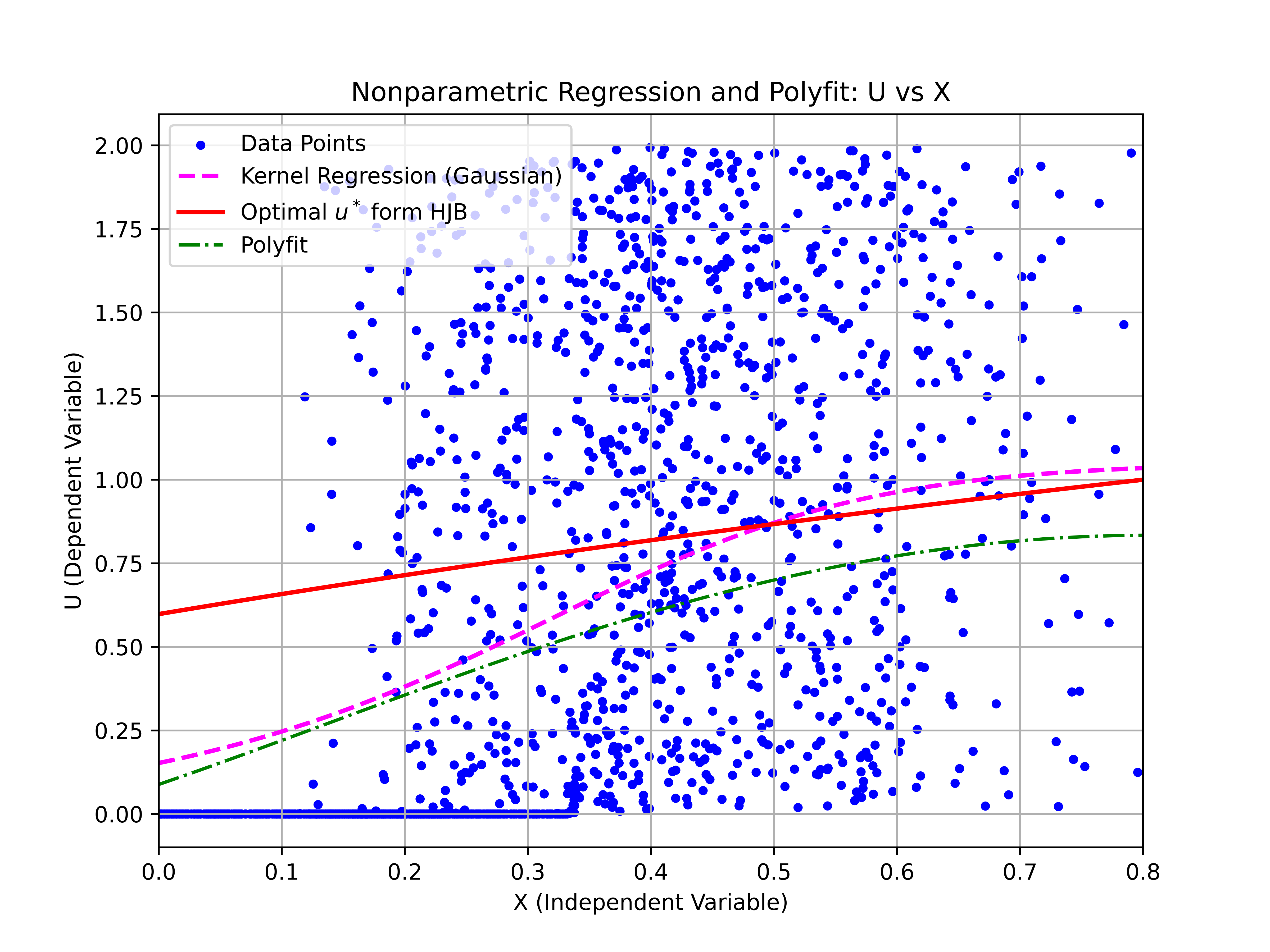}
    \caption{Comparison of optimal $u^*$ from RL and theoretical solution}
    \label{fig: comparison of optimal u}
\end{figure}

In Figure \ref{fig: REINFORCE algorithm vs theoretical solution}, we observe that even though the performance of the REINFORCE algorithm might not be ideal compared with the benchmark, one can still see some similar tendencies of the states affected by the trained actions. 
For instance, over the time intervals $[0.75, 1]$, $[1.25, 1.5]$, and $[1.5, 1.75]$, one may see the local time process got activated from the bottom graph. From the top graph, one may observe a similar tendency of the state process over the intervals $[0.25, 0.5]$ and $[1.75, 1.8]$. 
Note that the middle graph of the actions incorporates the underlying states, i.e., $u(t) = u(X(t))$ for $t\in[0, T]$. 
A more detailed illustration of the trained control and the optimal control can be found in Figure \ref{fig: comparison of optimal u}. 
Figure \ref{fig: comparison of optimal u} appropriately identifies the monotonic structure of optimal control $u^*$. The red solid line exhibits the optimal strategy from the analysis of the HJB in Appendix \ref{Appendix B}, and the pink dashed and green dash-dot lines are obtained using the learned optimal policy from the REINFORCE algorithm. 
Here, we utilize two separate approaches: polynomial fitting of order $4$ and nonparametric regression, to find the profile of the learned control $u$ against the state process $X$ such that they reveal the original desired function structure $u^*(\cdot) = u^*(X(\cdot))$ deduced from \eqref{optimal X} and Theorem \ref{An optimal control}. 
One can see that they all admit monotonic structures as approximations to the actual optimal strategy $u^*$. 


There are some comments regarding our investigation. 
From the implementation of the RL algorithm, it is important to note the significant advantages it offers over conventional simulation of the theoretical results. 
When simulating the solution to the free-boundary problem from Appendix \ref{Appendix B}, the difficulty and computational complexity of finding the optimal parameter $r^*$ in \eqref{r^*} become apparent. Searching exhaustively through potential values of $r>0$ is both tedious and time-consuming. 
This challenge becomes even more pronounced as the termination time 
$T$ increases. 
However, the potential drawbacks of RL must also be addressed. Through exploring the REINFORCE algorithm, we observed that RL experiments tend to exhibit notoriously high variance, and the performance of stochastic control can also be unstable due to the additional propagation of chaos. 
It is natural that continuous tuning of algorithm parameters is often required, and there is a need for a systematic approach to reduce variance. 
This presents an interesting direction for future research, with the potential to develop a stable yet flexible algorithm to tackle stochastic control problems in queueing theory.



\section{Conclusions}\label{sec conclusion}

The waiting time and queue length are two of the most significant characterizations of a queueing system. 
Companies provide their customers with an interface along with waiting times since the queue lengths sometimes may not be informative enough, for instance, in cloud computing platforms. 
On the other hand, customers are more interested in how long they have to wait other than the length of their queues. 
Methodologically, this paper contributes to the literature on drift-rate control problems through the state-dependent intensity of the discrete-view waiting time process and the non-trivial admission control cost function for diffusion models. 
We resolve a stochastic control problem on a single server queueing system, where the state process represents the offered waiting time process. 
The admission control problem incorporates a state-dependent intensity and generally distributed patience times. 
The use of control and its control cost are non-trivial due to the state process. 
However, its physical interpretation ensures the feasibility of our cost function assumptions. 
We also examine the REINFORCE algorithm, an RL approach, for solving stochastic control problems inspired by recent literature. 
We highlight the advantages and limitations of both conventional simulation methods and RL algorithms, thereby providing motivation for our future research on addressing stochastic control in queueing models.


The waiting time models are intriguing. 
We intend to extend the current model to consider more realistic control problems incorporating non-linear and time-state-dependent stochastic systems whose dedicated queuing systems are of interest. 
One may expect to formulate a partial differential equation form of HJB, whose solution may be challenging. 
Another extension is the multi-class system, and we intend to study the possibility of formulating a stochastic control problem based on the waiting-time state process. 
Moreover, exploring the potential for developing stable and flexible algorithms to solve stochastic control problems in queueing theory is also highly beneficial.

\newpage
\begin{appendices}

\section{The Finiteness of $J(x, 0, X)$}
\label{Appendix A}

To see the finiteness of zero control cost functional $J(x, 0, X)$, it is enough to consider 
\begin{equation}
    X(t) = x + \sigma W(t) - \theta\int_0^t  X(s)ds + L(t). 
    \label{no control}
\end{equation}
Let $U:[0, \infty)\mapsto(0, \infty)$ be a bounded twice continuously differentiable solution to
\begin{equation}
    \frac{\sigma^2}{2}U''(x) - \theta xU'(x) -\alpha U(x)= 0,  
    \label{ode with zero control}
\end{equation}
with the initial condition given by $U'(0) = -1$. 
We intend to find a solution satisfying that $U(\cdot)$ is bounded and has bounded first derivative such that $-M\leq U'(x)< 0$ for all $x\in[0, \infty)$, where $M$ is some positive constant. 
One can follow the same argument in Appendix \ref{Appendix B}
to deduce the existence of a solution. 
Roughly speaking, since \eqref{ode with zero control} can be interpreted as a free-boundary problem, we may impose another initial condition such that it admits a unique solution associated with the imposed condition. Then, we can find some solutions satisfying those conditions of boundedness as mentioned above by varying the imposed initial condition. As proved in Theorem \ref{A solution to the HJB equation}, such solutions turn out to be unique and associated with a specific imposed initial condition. 

Next, we apply the It\^{o}'s formula to $e^{-\alpha t}U(t)$: 
\begin{equation*}
\begin{aligned}
    e^{-\alpha t}U(X(t)) = U(x) &+ \sigma\int_0^t e^{-\alpha s}U'(X(s))dW(s) + \int_0^t e^{-\alpha s}U'(X(s))dL(s) \\
    &+\int_0^t e^{-\alpha s} \left[\frac{\sigma^2}{2}U''(X(s))-\alpha U(X(s)) - \theta X(s)U'(X(s))\right],
\end{aligned}
\end{equation*}
where $\alpha>0$ is a constant. 
Taking expected value on both sides and since the boundedness of $U'$ and the property of the local time $L(\cdot)$, we have
\begin{equation*}
\begin{aligned}
    E[e^{-\alpha t}U(X(t))] &= U(x) - E\left[\int_0^t e^{-\alpha s}dL(s)\right]
    \\
    &\quad +E\left[\int_0^t e^{-\alpha s} \left(\frac{\sigma^2}{2}U''(X(s))-\alpha U(X(s)) - \theta X(s)U'(X(s))\right)\right]\\
    &= U(x) - E\left[\int_0^t e^{-\alpha s}dL(s)\right]. 
\end{aligned}
\end{equation*}
Let $t$ tend to positive infinity. We obtain
\begin{equation}
    U(x) = E\left[\int_0^{\infty} e^{-\alpha s} dL(s)\right].
\end{equation}
Here, $\lim\limits_{t\to\infty} E[e^{-\alpha t}U(X(t))]=0$ since $U$ is bounded. 
Observe that the right-hand side with an additional $C(0)/\alpha$ is identical with the cost functional $J(x, 0, X)$ with $p=1$ in $\eqref{costfunctional}$. 
Therefore, it suffices to show that $U(\cdot)$ is finite. 
Consider equation \eqref{ode with zero control} and follow the proof of Theorem \ref{A solution to the HJB equation}, we can conclude the finiteness of $U(\cdot)$.

\section{Proof of Theorem \ref{A solution to the HJB equation}}
\label{Appendix B}

It suffices to consider equation \eqref{rewritten HJB} since once there is a solution to \eqref{rewritten HJB}, one can demonstrate that it is a solution to the formal HJB equation \eqref{formal HJB} as well. 
Note that the equation \eqref{rewritten HJB} is essentially a second-order ordinary differential equation with a non-linear term. 
Since it only has one boundary condition and the other underlying condition is free to vary, which leads to many requirements to the region in which the problem is to be solved, we are essentially solving a free-boundary problem associated with \eqref{rewritten HJB}. 

To this end, we consider a family of solutions $\{Y_r\}_{r\geq 0}$ to the following differential equation parameterized by $r\geq0$: 
\begin{equation}
\begin{aligned}
    \frac{\sigma^2}{2} Y_r''(x) - F(Y_r'(x)) - \theta xY_r'(x) - \alpha Y_r(x) = 0, \\
    Y_r'(0) = -p, \quad Y_r''(0) = r. 
    \label{yrequation}
\end{aligned}
\end{equation}
Our objective is to find a suitable $r\geq 0$ so that 
\begin{enumerate}[(i)]
    \item There exists a solution $Y_r(\cdot)$ to \eqref{yrequation} defined on $[0, \infty)$, and
    
    \item $Y_r'(x)$ is bounded and $Y_r'(x)<0$ for all $x\geq0$. 
\end{enumerate}
To this end, we intend to reveal the solution profile and its properties in terms of the parameter $r$ along with the variation of $x$ values.

For the ease of analysis, we introduce a reduction of order $W_r(\cdot) := Y_r'(\cdot)$, which further implies
\begin{equation}
    \frac{\sigma^2}{2} W_r'(x) - F(W_r(x)) - \theta x W_r(x) - \alpha \int_0^x W_r(s)ds-\alpha K_r = 0, 
    \label{wr with integral}
\end{equation}
where $K_r$ is a constant obtained by integrating the above substitution such that $Y_r(x) = \int_0^x W_r(s)ds + K_r$ for each $x \in [0, \infty)$. Moreover, we can derive a representation of $K_r = \frac{1}{\alpha}(\frac{\sigma^2}{2}W_r'(0) - F(W_r(0)))$ for later analysis, where $W'_r(0) = r$ and $W_r(0) = -p$. 
Here, we employ the notation $K_r$ to exhibit the dependence of the constant $K$ to the parameter $r$. 
If we differentiate \eqref{wr with integral}, we can obtain a new second-order equation:
\begin{equation}
\begin{aligned}
    \frac{\sigma^2}{2} W_r''(x) - F'(W_r(x))W_r'(x)-\theta x W_r'(x) - (\theta+\alpha)W_r(x) = 0, \\
    W_r(0) = -p, \quad W_r'(0) = r. 
    \label{wrequation}
\end{aligned}
\end{equation}
We observe that $F'$ is Lipschitz continuous on $(-\infty, -\delta]$ for any $\delta>0$ as described in \eqref{F'}. 
Therefore, a unique solution $W_r$ exists when $W_r(0)<0$. Let $c_{r}=\sup\{x>0: W_r(u)<0\  \text{for all}\ 0\leq u\leq x\}$. 
Hence, our objective is to find a suitable $r\geq 0$ so that $c_r=+\infty$ and the corresponding $W_r(\cdot)$ defined on $[0, \infty)$ is bounded and strictly negative for all $x\geq0$. Notice that the definition of $c_r$ could be interpreted as the first reaching time of the origin for solution $W_r$ or up to which $W_r$ is well defined (explosion time). We intend to interpret $c_r$ using the former explanation, and the explosion time will be labeled by notation $e_r$ if necessary. 

To avoid the sign condition in \eqref{rewritten HJB} and as mentioned in our objective (ii), we need to introduce a linear extension of the Legendre transform $F$ described in \eqref{Legendre transform in our case C}.  
We observe that the Legendre transform $F$ associated with our cost function is computed to be positive infinity for $y>0$.  
Moreover, we cannot guarantee $c_r=\infty$ for all $r>0$, i.e., we cannot ensure the set $\{x\geq0: W_r(x)>0\}$ corresponding to the solutions of \eqref{wr with integral} is empty at this stage. 
Hence, we define an $\Tilde{F}(\cdot)$ function as a linear extension of the Legendre transform to overcome these obstacles. Define $\Tilde{F}(\cdot)$ as the following: 
\begin{equation}
     \Tilde{F}(y) = 
     \begin{cases}
         F(y), & \text{ if }  -\infty<y\leq -\delta, \\
         F'(-\delta)(y+\delta)+F(-\delta), & \text{ if } -\delta<y<\infty. 
     \end{cases}
\label{Tilde F}
\end{equation}
where $\delta>0$ and $F(\cdot)$ is the Legendre transform. Observe that $\Tilde{F}(\cdot)$ is well defined for all $y\in \mathbb{R}$ and $\Tilde{F}$ is continuous and differentiable at $-\delta$. Now, we have successfully extended $F$ to all the real numbers. 

To exhibit the existence and uniqueness of a solution to \eqref{wr with integral}, we consider the equation \eqref{wrequation} with $F(\cdot)$ replaced by $\Tilde{F}(\cdot)$ such that 
\begin{equation}
\begin{aligned}
    \frac{\sigma^2}{2} \Tilde{W}_r''(x) - \Tilde{F}'(\Tilde{W}_r(x))\Tilde{W}_r'(x)-\theta x \Tilde{W}_r'(x) - (\theta+\alpha)\Tilde{W}_r(x) = 0, \\
    \Tilde{W}_r(0) = -p, \quad \Tilde{W}_r'(0) = r. 
    \label{wrequation with tilde F}
\end{aligned}
\end{equation}
In this case, it is not necessary to check the sign condition consistently. 
Integrating both sides of \eqref{wrequation with tilde F} yields
\begin{equation*}
    \Tilde{W}_r(x) = \frac{2}{\sigma^2} \int_0^x \left[\Tilde{F}(\Tilde{W}_r(s)) + \theta s \Tilde{W}_r(s) + \alpha K   \right]ds + \frac{2\alpha}{\sigma^2} \int_0^x \int_0^s \Tilde{W}_r(v)dvds. 
\end{equation*}
Since the extension of the Legendre transform $\Tilde{F}(\cdot)$ is Lipschitz continuous, there exists a unique solution to \eqref{wrequation with tilde F} in the neighborhood of the origin. 
It is worth mentioning that for fixed $r\geq0$, $\Tilde{W}_r(\cdot)$ depends on $\delta>0$ as well since it is the solution to \eqref{wrequation with tilde F} and $\Tilde{F}(\cdot)$ depends on $\delta>0$. For $0<\delta_1<\delta_2$, we consider $\Tilde{W}_{r, \delta_1}$ and $\Tilde{W}_{r, \delta_2}$ corresponding to $\delta_1$ and $\delta_2$, respectively. Let $\eta = \inf\{x\in[0, c_r): \Tilde{W}_{r, \delta_1}(x)\vee\Tilde{W}_{r, \delta_2}(x)=-\delta_2\}$. 
We conclude that $\Tilde{W}_{r, \delta_1}(x)\equiv\Tilde{W}_{r, \delta_2}(x)$ for all $x\in[0, \eta)$ when $\delta_2$ is arbitrarily small since the uniqueness. 
For brevity, we omit $\delta$ in the notation of $\Tilde{W}_{r, \delta}(\cdot)$ when there is no ambiguity.
Notice that $W_r(x)$ agrees with $\Tilde{W}_r(x)$ for all $x\in[0, c_r)$ when $\delta$ is arbitrarily small since $\Tilde{F}(x)$ 
coincides with $F(x)$ when $x\in(-\infty, -\delta]$ and by the uniqueness.  
Therefore, it is trivial that there also exists a unique solution to \eqref{wrequation} for $x\in[0, c_r)$ by taking an arbitrarily small $\delta$. 
Hence, we conclude that the corresponding $Y_r$ is the solution to \eqref{yrequation}, which is equivalent to \eqref{wrequation}. 
Next, we intend to reveal the behaviors of $W_r$ in terms of different $r>0$ by introducing a comparison result of solutions $\{W_r\}_{r\geq0}$ and the corresponding $\{Y_r\}_{r\geq0}$. 

\begin{lemma}[Comparison lemma]
Let $0<r_2<r_1$. Consider the solution to $\eqref{wrequation}$ and $\eqref{wrequation with tilde F}$, $W_r(\cdot)$ and $\Tilde{W}_r(\cdot)$ respectively. The following results hold:  
\begin{enumerate}[(i)]
    \item We have $\Tilde{W}_{r_2}'(x) < \Tilde{W}_{r_1}'(x)$ for all $x\in[0, c_{r_1}\wedge c_{r_2})$. Moreover, $\Tilde{W}_{r_2}(x) \leq \Tilde{W}_{r_1}(x)$ (equality holds only if $x = 0)$ and the corresponding $\Tilde{Y}_{r_2}(x) < \Tilde{Y}_{r_1}(x)$ for all $x\in[0, c_{r_1}\wedge c_{r_2})$.
    
    \item Consequently, $W_{r_2}'(x) < W_{r_1}'(x)$ for all $x\in[0, c_{r_1}\wedge c_{r_2})$. Moreover, $W_{r_2}(x) \leq W_{r_1}(x)$ (equality holds only if $x = 0)$ and the corresponding $Y_{r_2}(x) < Y_{r_1}(x)$ for all $x\in[0, c_{r_1}\wedge c_{r_2})$.
\end{enumerate}
\label{comparison lemma}
\end{lemma}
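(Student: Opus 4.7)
My plan is to work with the first integral of the ODE for $\tilde{W}_r$ rather than its second-order form, because a direct ``first contact'' argument applied directly to the second-order equation runs into trouble when the common derivative of $\tilde{W}_{r_1}$ and $\tilde{W}_{r_2}$ at the contact point happens to be negative. Integrating \eqref{wrequation with tilde F} from $0$ to $x$, applying the chain rule to the term $\tilde{F}'(\tilde{W}_r)\tilde{W}_r'$ and integration by parts to $\theta s\,\tilde{W}_r'(s)$, I would obtain
\begin{equation*}
\frac{\sigma^2}{2}\tilde{W}_r'(x)=\tilde{F}(\tilde{W}_r(x))+\theta x\,\tilde{W}_r(x)+\alpha\int_0^x\tilde{W}_r(s)\,ds+\tfrac{\sigma^2}{2}r-\tilde{F}(-p).
\end{equation*}
Setting $D(x)=\tilde{W}_{r_1}(x)-\tilde{W}_{r_2}(x)$ on $[0,c_{r_1}\wedge c_{r_2})$ and subtracting the $r_1$- and $r_2$-versions yields the central identity
\begin{equation*}
\frac{\sigma^2}{2}D'(x)=\bigl[\tilde{F}(\tilde{W}_{r_1}(x))-\tilde{F}(\tilde{W}_{r_2}(x))\bigr]+\theta x\,D(x)+\alpha\int_0^x D(s)\,ds+\tfrac{\sigma^2}{2}(r_1-r_2).
\end{equation*}
The decisive feature of this identity is that three of the four right-hand terms are non-negative whenever $D\ge 0$, because $\tilde{F}$ is non-decreasing on $\mathbb{R}$ by its piecewise definition \eqref{Tilde F}, and the fourth is the strictly positive constant $\tfrac{\sigma^2}{2}(r_1-r_2)$.

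Next I would establish $D>0$ on $(0,c_{r_1}\wedge c_{r_2})$ by contradiction. Suppose some first $x^{*}\in(0,c_{r_1}\wedge c_{r_2})$ satisfies $D(x^{*})=0$ and $D>0$ on $(0,x^{*})$. Evaluating the central identity at $x^{*}$ annihilates the first two right-hand terms but leaves a strictly positive integral term plus the strictly positive constant, so $D'(x^{*})>0$. This contradicts the fact that $D(0)=D(x^{*})=0$ with $D>0$ in between forces $D'(x^{*})\le 0$. Hence $D>0$ throughout $(0,c_{r_1}\wedge c_{r_2})$. Feeding the inequality $D\ge 0$ back into the same central identity then gives $D'(x)>0$ for every $x\in[0,c_{r_1}\wedge c_{r_2})$, which is exactly $\tilde{W}_{r_2}'(x)<\tilde{W}_{r_1}'(x)$.

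For the $\tilde{Y}$-comparison, I would use $\tilde{Y}_r'=\tilde{W}_r$ together with the offset $\tilde{Y}_r(0)=K_r=\tfrac{1}{\alpha}\bigl(\tfrac{\sigma^2}{2}r-\tilde{F}(-p)\bigr)$, which is strictly increasing in $r$; then $\tilde{Y}_{r_1}-\tilde{Y}_{r_2}$ is strictly positive at $x=0$ and non-decreasing in $x$, hence strictly positive throughout $[0,c_{r_1}\wedge c_{r_2})$. Part (ii) transfers from part (i) via the observation, already made in the paragraph preceding the lemma, that $W_r\equiv\tilde{W}_r$ on $[0,c_r)$ when $\delta$ is taken sufficiently small; the analogous integrated identity holds with $F$ replacing $\tilde{F}$ on that interval (where $W_r<0$, so $F$ is finite and non-decreasing), and the argument above transcribes verbatim.

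The main obstacle, as signaled at the outset, is recognising that the natural second-order comparison is delicate: at a putative first zero of $D'$, the cross-term $[\tilde{F}'(\tilde{W}_{r_1})-\tilde{F}'(\tilde{W}_{r_2})]\,\tilde{W}_{r_1}'(x_0)$ cannot be signed a priori because $\tilde{W}_{r_1}'(x_0)$ may be negative, so one cannot conclude $D''(x_0)>0$ directly. Passing to the first integral replaces $\tilde{F}'$ by $\tilde{F}$, whose monotonicity on all of $\mathbb{R}$ is guaranteed by the linear extension, and all the signs in the resulting identity then line up cleanly.
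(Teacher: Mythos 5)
Your proposal is correct and is essentially the paper's proof: both pass to the first integral of the ODE (which is exactly \eqref{wr with integral} with $\tilde F$ substituted), subtract the $r_1$- and $r_2$-versions, and run a first-contact contradiction that rests on (a) the monotonicity of $\tilde F$, (b) the non-negativity of the accumulated integral $\int_0^x D$, and (c) the strictly positive constant $\tfrac{\sigma^2}{2}(r_1-r_2)=\alpha(K_{r_1}-K_{r_2})$. The only difference is bookkeeping: the paper locates the first $a$ with $D'(a)=0$ and contradicts directly there, whereas you first force $D>0$ by contradicting a putative first zero $x^*$ of $D$ and then feed $D\ge 0$ back into the identity to get $D'>0$; both variants are valid and use the same ingredients. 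Your closing remark on why the raw second-order equation is awkward (the term $\tilde F'(\tilde W_{r_1})-\tilde F'(\tilde W_{r_2})$ multiplying a possibly negative $\tilde W_{r_1}'$) correctly identifies the reason the paper works with \eqref{wr with integral} rather than \eqref{wrequation with tilde F} directly.
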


\textbf{Proof.}
To see part (i), assume $\Tilde{W}_{r_2}'(x) \geq \Tilde{W}_{r_1}'(x)$ for some $x\in[0, c_{r_1}\wedge c_{r_2})$. We attempt to derive a contradiction. The boundary conditions in \eqref{wrequation with tilde F} yields $\Tilde{W}_{r_1}(0) = \Tilde{W}_{r_2}(0) = -p$ and $\Tilde{W}_{r_1}'(0) = r_1 > r_2 = \Tilde{W}_{r_2}'(0)$. Therefore, there exist a positive number $a$ such that $\Tilde{W}_{r_1}'(a) = \Tilde{W}_{r_2}'(a)$. As a consequence, we know $\Tilde{W}_{r_1}'(x) > \Tilde{W}_{r_2}'(x)$ for $0\leq x<a$. Once we define $\Tilde{f}(x) = \Tilde{W}_{r_1}(x) - \Tilde{W}_{r_2}(x)$, we have $\Tilde{f}'(x) = \Tilde{W}_{r_1}'(x) - \Tilde{W}_{r_2}'(x) > 0$ for all $0\leq x < a$, which implies $\Tilde{f}''(x)<0$ in a neighborhood of $a$. Thus, we conclude that $W_{r_1}''(a) < W_{r_2}''(a)$. Moreover, since $\Tilde{f}'(a) = 0$ and $\Tilde{f}(0) = 0$, we can find a local maximum of $f(\cdot)$. Thus, $\Tilde{f}(a) = \Tilde{W}_{r_1}(a) - \Tilde{W}_{r_2}(a) >0$, i.e.,  $\Tilde{W}_{r_1}(a)>\Tilde{W}_{r_2}(a)$.

Now consider differential equation \eqref{wrequation with tilde F}. 
The difference between two equations at point $a$ in terms of $r_1$ and $r_2$, respectively can be written as
\begin{equation}
\begin{aligned}
    \frac{\sigma^2}{2}(\Tilde{W}_{r_1}''(a) - \Tilde{W}_{r_2}''(a)) &= \Tilde{F}'(\Tilde{W}_{r_1}(a))\Tilde{W}'_{r_1}(a) - \Tilde{F}'(\Tilde{W}_{r_2}(a))\Tilde{W}'_{r_2}(a) + \theta a (\Tilde{W}'_{r_1}(a) - \Tilde{W}'_{r_2}(a)) \\
    &\quad + (\theta + \alpha) (\Tilde{W}_{r_1}(a) - \Tilde{W}_{r_2}(a)).   
\end{aligned}
\end{equation}
Observe that the left-hand side is strictly negative, which contradicts the strictly positive right-hand side since $\Tilde{F}'(\cdot)$ is non-decreasing, $\Tilde{W}_{r_1}(a)>\Tilde{W}_{r_2}(a)$, and $\Tilde{W}'_{r_1}(a) = \Tilde{W}'_{r_2}(a)$. Hence, $W_{r_2}'(x) < W_{r_1}'(x)$ holds for all $x\in[0, c_{r_1}\wedge c_{r_2})$. 
As we defined $\Tilde{f}(\cdot)$ above, we can further conclude that $\Tilde{f}(\cdot)$ is strictly monotone increasing since $\Tilde{f}'(x) = \Tilde{W}_{r_1}'(x) - \Tilde{W}_{r_2}'(x)>0$. In conjunction with $\Tilde{f}(0) = 0$, we obtain that $f(\cdot)$ is non-negative for all $x$, i.e., $\Tilde{W}_{r_1}(x) \geq \Tilde{W}_{r_2}(x)$ for all $x\in[0, c_{r_1}\wedge c_{r_2})$ and equality holds only if $x = 0$.



The last assertion in part (i) follows immediately from our substitution since we have
\begin{equation*}
    \Tilde{Y}_{r_1}(x) - \Tilde{Y}_{r_2}(x) = \int_0^x (\Tilde{W}_{r_1}(s) - \Tilde{W}_{r_2}(s))ds + \Tilde{K}_{r_1} - \Tilde{K}_{r_2}, 
\end{equation*}
where $\Tilde{K}_{r_i} = \frac{1}{\alpha}(\frac{\sigma^2}{2}r_i-\Tilde{F}(-p))$ for $i = 1, 2$. Therefore, the last term is positive, and the integrand of the first term is non-negative, which implies $\Tilde{Y}_{r_1}(x) > \Tilde{Y}_{r_2}(x)$ for all $x\in[0, c_{r_1}\wedge c_{r_2})$. 

To prove part (ii), we observe that since $W_r(x)<0$ for all $x\in[0, c_r)$, it suffices to consider \eqref{wrequation with tilde F} since $\Tilde{F}(y)$ coincides with $F(y)$ for all $y\in(-\infty, -\delta]$ when $\delta>0$ small enough. Then the uniqueness
suggests that $W_r(x) = \Tilde{W}_r(x)$ for all $x\in[0, c_r)$. 
Since $W_r(x)$ agrees with $\Tilde{W}_r(x)$ for $x\in[0, c_r)$ when $\delta>0$ is arbitrarily small, it is evident that $W_r$ has identical comparison results for $x\in [0, c_r)$ by letting $\delta$ goes to zero. That is $W_{r_2}'(x) < W_{r_1}'(x)$, $W_{r_2}(x) \leq W_{r_1}(x)$ (Equality holds only if $x = 0)$ and the corresponding $Y_{r_2}(x) < Y_{r_1}(x)$ for all $x\in[0, c_{r_1}\wedge c_{r_2})$. This completes the proof. 
\hfill $\square$ \\

Next, we concern the profile of $\Tilde{W}_r(\cdot)$ for fixed $r\geq 0$ and its properties. 
Fix $r\geq0$.  
Notice that, if $\Tilde{W}_r'(\xi) = 0$ for some $\xi \in [0, \infty)$, \eqref{wrequation with tilde F} at $\xi$ implies
\begin{equation}
    \frac{\sigma^2}{2} \Tilde{W}_{r}''(\xi) = (\theta+\alpha) \Tilde{W}_r(\xi). 
\end{equation}
In addition, if $\Tilde{W}_r(\xi) = 0$, we have $\Tilde{W}_r''(\xi) = 0$ as well. By the uniqueness,  
it follows that $\Tilde{W}_r(\cdot)$ is identically zero. However, this contradicts initial condition $\Tilde{W}_r(0) = -p$. Thus, we obtain the following three cases:

Case 1: If $\Tilde{W}_r'(\xi) = 0$ and $\Tilde{W}_r(\xi) = 0$, then $\Tilde{W}_r''(\xi) = 0$. Not possible. 

Case 2: If $\Tilde{W}_r'(\xi) = 0$ and $\Tilde{W}_r(\xi) > 0$, then $\Tilde{W}_r''(\xi) > 0$. There exists a positive local minimum. 

Case 3: If $\Tilde{W}_r'(\xi) = 0$ and $\Tilde{W}_r(\xi) < 0$, then $\Tilde{W}_r''(\xi) < 0$. There exists a negative local maximum. 

Those three cases imply that there will not be any positive local maximum and negative local minimum. In particular, there cannot be any oscillations. 
Considering the solutions to \eqref{wr with integral} with respect to $r\geq0$, only Case 3 may occur for all $x\in[0, c_r)$. 
Moreover, Case 1 is violated in a different manner. 
In the next lemma, we demonstrate that whenever the solution $W_r$ to \eqref{wr with integral} associated with some $r\geq0$ achieves $x$ axis, it cannot reach $x$ axis tangentially. Loosely speaking, $W_r$ and $W_r'$ cannot vanish simultaneously. 

\begin{lemma}
Consider the solution $W_r$ to \eqref{wr with integral} with boundary conditions $W_r(0)=-p$ and $W_r'(0)=r\geq0$. If $\lim\limits_{x\to c_r^-}W_r(x)=0$ for some $r\geq0$, we have $\lim\limits_{x\to c_r^-}W_r'(x)>0$. 
\label{touch x axis tangentially}
\end{lemma}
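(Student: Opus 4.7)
The plan is to proceed by contradiction, using an energy-type monotonicity to force an exponential lower bound on $-W_r(x)$ that is incompatible with the hypothesis $W_r(c_r^-)=0$. First I would establish that $L_1:=\lim_{x\to c_r^-}W_r'(x)$ exists and satisfies $L_1\geq 0$: existence follows by reading off the right-hand side of the integrated form \eqref{wr with integral}, since each term is continuous in $x$ and $W_r(x)\to 0$; non-negativity follows because $W_r$ approaches $0$ from below on $[0,c_r)$ (the no-oscillation analysis preceding the lemma rules out negative local minima of $W_r$, so $W_r$ must be monotone increasing near $c_r$). It then suffices to prove $L_1>0$ under the implicit understanding that $c_r<\infty$, since the alternative $c_r=\infty$ is vacuous: $W_r\to 0$ together with $W_r'\to L_1>0$ would be internally inconsistent as $x\to\infty$.

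The key step is to multiply \eqref{wrequation} by $W_r'(x)$ and rearrange. Using that $F'\geq 0$ on $(-\infty,0)$ by \eqref{F'} (a dressed-up form of the convexity of $C$) and that $\theta x\geq 0$, a direct calculation gives
\begin{equation*}
\frac{d}{dx}\left[\frac{\sigma^2}{4}(W_r'(x))^2 - \frac{\theta+\alpha}{2}(W_r(x))^2\right] = \bigl[F'(W_r(x))+\theta x\bigr](W_r'(x))^2 \geq 0,
\end{equation*}
so the quantity $E(x):=\frac{\sigma^2}{4}(W_r'(x))^2-\frac{\theta+\alpha}{2}(W_r(x))^2$ is non-decreasing on $[0,c_r)$ with $\lim_{x\to c_r^-}E(x)=\frac{\sigma^2}{4}L_1^2$ by the first step.

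Assuming toward contradiction that $L_1=0$, monotonicity of $E$ forces $E(x)\leq 0$ on $[0,c_r)$, i.e., $(W_r'(x))^2\leq \gamma^2(W_r(x))^2$ with $\gamma:=\sqrt{2(\theta+\alpha)/\sigma^2}$. Since $W_r'\geq 0$ and $-W_r>0$ on $(0,c_r)$, this reads $W_r'(x)\leq -\gamma W_r(x)$, so $v(x):=-W_r(x)$ satisfies $(\log v)'(x)\geq -\gamma$; integrating from $0$ yields $v(x)\geq p\,e^{-\gamma x}$ on $[0,c_r)$. Letting $x\uparrow c_r$ then produces $0=v(c_r^-)\geq p\,e^{-\gamma c_r}>0$, the desired contradiction, and hence $L_1>0$. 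I expect the main obstacle to be spotting the correct energy: once one exploits both the non-negativity of $F'$ and of $\theta x$, the combination $\frac{\sigma^2}{4}(W_r')^2-\frac{\theta+\alpha}{2}W_r^2$ is essentially the only quantity whose derivative along \eqref{wrequation} has a definite sign, after which the Gronwall-type closing is routine.
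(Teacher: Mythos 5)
Your proof is correct, and it takes a genuinely different route from the paper. The paper's proof stays at the level of the first-order integrated form \eqref{wr with integral}: it forms the difference quotient $\bigl(W_r'(c_r)-W_r'(c_r-\delta)\bigr)/\bigl(W_r(c_r)-W_r(c_r-\delta)\bigr)$, controls the trailing integral with the mean value theorem, and then sends $W_r'(c_r-\delta)/W_r(c_r-\delta)\to+\infty$ by exploiting the singular rate $F(y)/y\to+\infty$ as $y\to0^-$ (equivalently $\lim_{y\to0^-}F'(y)=\infty$, a consequence of $C'(u)\to 0$ as $u\to\infty$), from which a sign contradiction follows. You instead multiply the second-order form \eqref{wrequation} by $W_r'$ to get the monotone quantity $E(x)=\frac{\sigma^2}{4}(W_r'(x))^2-\frac{\theta+\alpha}{2}(W_r(x))^2$ satisfying $E'(x)=\bigl[F'(W_r(x))+\theta x\bigr](W_r'(x))^2\ge 0$, and close with a Gr\"onwall-type bound on $\log(-W_r)$. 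The only structural input on $C$ your argument uses is $F'\ge0$ (i.e.\ convexity of $C$), whereas the paper's proof genuinely needs the divergence of $F'$ at the origin; this makes your route a bit more elementary and transparent. Both proofs implicitly assume $c_r<\infty$ (the paper's difference quotient and your inequality $p e^{-\gamma c_r}>0$ both require it), and you flag this explicitly, which is consistent with how the lemma is invoked in Proposition \ref{exist r^*}. A minor aside: when $r>p\sqrt{2(\theta+\alpha)}/\sigma$ the inequality $E(x)\le 0$ already fails at $x=0$, so in that parameter regime the contradiction is immediate and the Gr\"onwall step is not even needed.
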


\textbf{Proof.}
Since we assumed that $\lim\limits_{x\to c_r^-}W_r(x)=0$ for some $r\geq 0$ and $F(0)=0$ in \eqref{Legendre transform in our case C}, \eqref{wr with integral} guarantees that $\lim\limits_{x\to c_r^-}W_r'$ exists and 
\begin{equation*}
\begin{aligned}
    \lim\limits_{x\to c_r^-}\frac{\sigma^2}{2}W_r'(x) &= \lim\limits_{x\to c_r^-}F(W_r(x)) + \lim\limits_{x\to c_r^-}\theta x W_r(x) + \lim\limits_{x\to c_r^-} \alpha\left(\int_0^x W_r(s)ds + K_r\right) \\
    &=\alpha\left(\int_0^{c_r}W_r(s)ds + K_r\right). 
\end{aligned}
\end{equation*}
Suppose $\lim\limits_{x\to c_r^-}W_r'(x)=0$ for some $r\geq0$, which means $W_r$ reaches $x$ axis tangentially at point $x=c_r$. Consider the following quotient:
\begin{equation*}
    \frac{W_r'(c_r)-W_r'(c_r-\delta)}{W_r(c_r)-W_r(c_r-\delta)},
\end{equation*}
for $\delta>0$. Since the assumption $\lim\limits_{x\to c_r^-}W_r(x)=0$ and the definition of $c_r$, the denominator is non-zero. 
With the help of \eqref{wr with integral}, we can derive an equality: 
\begin{equation*}
\begin{aligned}
    \frac{\sigma^2}{2}\frac{W_r'(c_r)-W_r'(c_r-\delta)}{W_r(c_r)-W_r(c_r-\delta)} &= \frac{F(W_r(c_r))-F(W_r(c_r-\delta))}{W_r(c_r)-W_r(c_r-\delta)} \\
    &\quad + \theta \frac{c_r W_r(c_r)-(c_r-\delta)W_r(c_r-\delta)}{W_r(c_r)-W_r(c_r-\delta)} \\
    &\quad +\alpha \frac{\int_{c_r-\delta}^{c_r} W_r(s)ds}{W_r(c_r)-W_r(c_r-\delta)}. 
\end{aligned}
\end{equation*}
Since $\lim\limits_{x\to c_r^-}W_r(x)=0$ and $\lim\limits_{x\to c_r^-}W_r'(x)=0$ and together with $F(0)=0$,  
we can rewrite the quotient as
\begin{equation*}
    \frac{\sigma^2}{2}\frac{W_r'(c_r-\delta)}{W_r(c_r-\delta)} = \frac{F(W_r(c_r-\delta))}{W_r(c_r-\delta)} + \theta(c_r-\delta)+\alpha \frac{\int_{c_r-\delta}^{c_r} W_r(s)ds}{-W_r(c_r-\delta)}. 
\end{equation*}
Assume $h(x) = \int_0^x W_r(s)ds$ for all $x\in[0, c_r)$. Observe that $\int_{c_r-\delta}^{c_r} W_r(s)ds = h(c_r)-h(c_r-\delta)$. Moreover, the mean value theorem renders that $h(c_r)-h(c_r-\delta) = \delta h'(\xi_{\delta}) = \delta W_r(\xi_{\delta})$ for some $\xi_{\delta}\in(c_r-\delta, c_r)$. Consider the last term of above equality: $\frac{\int_{c_r-\delta}^{c_r} W_r(s)ds}{-W_r(c_r-\delta)}$. Since $W_r(c_r-\delta)<W_r(\xi_{\delta})<0$, we conclude that $-\delta<\frac{\int_{c_r-\delta}^{c_r} W_r(s)ds}{-W_r(c_r-\delta)}<0$. Therefore, we obtain
\begin{equation*}
\begin{aligned}
    \frac{\sigma^2}{2}\frac{W_r'(c_r-\delta)}{W_r(c_r-\delta)} &=\frac{F(W_r(c_r-\delta))}{W_r(c_r-\delta)} + \theta(c_r-\delta)+\alpha \frac{\int_{c_r-\delta}^{c_r} W_r(s)ds}{-W_r(c_r-\delta)}\\
    &>\frac{F(W_r(c_r-\delta))}{W_r(c_r-\delta)} + \theta(c_r-\delta) -\alpha \delta.
\end{aligned}
\end{equation*}
Consider the first term of the above inequality, and we observe that $\lim\limits_{\delta\to0}\frac{F(W_r(c_r-\delta))}{W_r(c_r-\delta)} = +\infty$. Therefore, we have $\lim\limits_{\delta\to0} \frac{W_r'(c_r-\delta)}{W_r(c_r-\delta)} = +\infty$, i.e., for any $M>0$, there exists some $\delta_0$ such that $\frac{W_r'(c_r-\delta)}{W_r(c_r-\delta)}>M$ whenever $|\delta|<\delta_0$. This implies $-W_r'(c_r-\delta)>-M\cdot W_r(c_r-\delta)$. Since $-W_r'(c_r-\delta)<0$, we have $-M\cdot W_r(c_r-\delta)<0$. However, this contradicts $W_r(c_r-\delta)<0$. We then conclude that $\lim\limits_{x\to c_r^-}W_r'(x)>0$. 

\hfill $\square$ \\

We have described the properties of the solution profiles of $W_r(x)$ with respect to different $r$ and $x$ in Lemma \ref{comparison lemma} and Lemma \ref{touch x axis tangentially}. 
Next, we would like to understand the solution profiles of $W_r$ when $r$ is small and when $r$ is large. 
Moreover, these results could help us to visualize the behaviors of solution $W_r$'s with respect to different $r\geq0$. 

\begin{lemma}
Consider the sequence of solutions $\{\Tilde{W}_r(x)\}_{r\geq 0}$ to \eqref{wrequation with tilde F} and $\{W_r(x)\}_{r\geq0}$ to \eqref{wrequation} for all $x\in[0, c_r)$, the following conclusions hold:
\begin{enumerate}[(i)]
    \item There exists a $r_0>0$ small enough such that for each $0<r\leq r_0$, we have $\Tilde{W}_r(x)<0$ for all $x\in[0, c_r)$ and it has a local maximum.
    
    \item The same conclusion in part (i) also holds for $W_r(x)$ for all $x\in[0, c_r)$. 
\end{enumerate}
\label{small r case}
\end{lemma}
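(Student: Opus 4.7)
The plan is to prove (i) by a continuous-dependence argument centered at the degenerate case $r=0$, and then to deduce (ii) from the already-recorded identification $\Tilde{W}_r\equiv W_r$ on $[0,c_r)$ whenever the extension parameter $\delta>0$ is taken small enough.

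First I would examine the limiting case $r=0$. The initial data are $\Tilde{W}_0(0)=-p$ and $\Tilde{W}_0'(0)=0$, so evaluating \eqref{wrequation with tilde F} at the origin gives
\begin{equation*}
    \frac{\sigma^2}{2}\Tilde{W}_0''(0)=\Tilde{F}'(-p)\cdot 0+0+(\theta+\alpha)(-p)=-(\theta+\alpha)p<0.
\end{equation*}
By continuity $\Tilde{W}_0''<0$ on some interval $[0,\eta]$, and since $\Tilde{W}_0'(0)=0$ this forces $\Tilde{W}_0'(x)<0$ throughout $(0,\eta]$. Shrinking $\eta$ if necessary I would also arrange $\Tilde{W}_0(\eta)\leq -p/2$.

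Next I would transfer this picture to small $r>0$ using continuous dependence on initial data. Because $\Tilde{F}'$ is bounded and Lipschitz on $\mathbb{R}$ by construction in \eqref{Tilde F}, the right-hand side of \eqref{wrequation with tilde F} is locally Lipschitz in $(\Tilde{W}_r,\Tilde{W}_r')$ and of at most linear growth; hence every solution extends over $[0,\eta]$ and the map $(r,x)\mapsto(\Tilde{W}_r(x),\Tilde{W}_r'(x))$ is continuous on $[0,1]\times[0,\eta]$. Choosing $r_0\in(0,1]$ small enough yields $\Tilde{W}_r'(\eta)<0$ and $\Tilde{W}_r(x)<-p/4$ for all $x\in[0,\eta]$ and $r\in(0,r_0]$. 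Since $\Tilde{W}_r'(0)=r>0$, the intermediate value theorem then furnishes $\xi_r\in(0,\eta)$ with $\Tilde{W}_r'(\xi_r)=0$ and $\Tilde{W}_r(\xi_r)<-p/4<0$. Case 3 of the case analysis preceding Lemma \ref{touch x axis tangentially} gives $\Tilde{W}_r''(\xi_r)<0$, so $\xi_r$ is a local maximum of $\Tilde{W}_r$ in $[0,c_r)$; the strict negativity of $\Tilde{W}_r$ on $[0,c_r)$ is automatic from the definition of $c_r$. Part (ii) is then immediate: as observed in the paragraph following \eqref{wrequation with tilde F}, for each fixed $r\geq 0$ one may take $\delta>0$ arbitrarily small, after which uniqueness of the solution to \eqref{wrequation with tilde F} identifies $\Tilde{W}_r$ with $W_r$ throughout $[0,c_r)$, transferring both the local maximum and the sign condition.

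The main subtlety is that the interval $[0,\eta]$ on which continuous dependence is applied must be chosen independently of $r$. This is why I would first fix $\eta$ using the $r=0$ trajectory and only afterwards impose smallness of $r_0$; the linear growth of the coefficients in \eqref{wrequation with tilde F} precludes blow-up before $\eta$ for every $r$ in the fixed range $[0,1]$, so no uniform lower bound on $c_r$ needs to be imported in advance. The remaining steps are routine applications of standard ODE theory.
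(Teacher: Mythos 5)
Your proof is correct and reaches the same conclusion, but by a slightly different route than the paper. The paper establishes that $\Tilde{W}_0$ has a local maximum at the origin, then uses continuity in the parameter $r$ to find a \emph{single} $r_0>0$ for which $\Tilde{W}_{r_0}$ has a local maximum, and finally appeals to the comparison lemma (Lemma~\ref{comparison lemma}) to transfer this property to \emph{all} $r\in(0,r_0]$: since $\Tilde{W}_r' < \Tilde{W}_{r_0}'$ for $r<r_0$, the sign change of $\Tilde{W}_{r_0}'$ forces a sign change of $\Tilde{W}_r'$ as well. You instead sidestep the comparison lemma by fixing an interval $[0,\eta]$ from the $r=0$ trajectory, using uniform continuous dependence on the compact set $[0,1]\times[0,\eta]$ (available because the extended drift with $\Tilde{F}'$ is globally Lipschitz) to obtain $\Tilde{W}_r'(\eta)<0$ and $\Tilde{W}_r<0$ on $[0,\eta]$ simultaneously for every $r\in(0,r_0]$, and then applying the intermediate value theorem directly to each $\Tilde{W}_r'$. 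Both arguments hinge on the sign of $\Tilde{W}_0''(0)$ and on continuous dependence on the initial slope; your version is self-contained and does not lean on the ordering of the $\Tilde{W}_r$'s, which is a modest simplification. One small point of bookkeeping: the clause ``the strict negativity of $\Tilde{W}_r$ on $[0,c_r)$ is automatic from the definition of $c_r$'' implicitly uses the identification $\Tilde{W}_r\equiv W_r$ on $[0,c_r)$ for small $\delta$ (recorded just before Lemma~\ref{comparison lemma}), since $c_r$ is defined in terms of $W_r$, not $\Tilde{W}_r$; you invoke this identification for part~(ii), and it should also be cited here for part~(i). This is a minor gap in exposition, not in substance.
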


\textbf{Proof.}
First, we consider the case of $r = 0$. In this case, \eqref{wrequation with tilde F} becomes
\begin{equation}
\begin{aligned}
    \frac{\sigma^2}{2} \Tilde{W}_0''(x) - \Tilde{F}'(W_0(x))\Tilde{W}_0'(x)-\theta x \Tilde{W}_0'(x) - (\theta+\alpha)\Tilde{W}_0(x) = 0, \\
    \Tilde{W}_0(0) = -p, \quad \Tilde{W}_0'(0) = 0. 
    \label{eq: tilde W for r = 0 ODE}
\end{aligned}
\end{equation}
Consider above equation \eqref{eq: tilde W for r = 0 ODE} at point $x=0$, we have
\begin{equation*}
    \frac{\sigma^2}{2} \Tilde{W}_{0}''(0) = (\theta+\alpha)\Tilde{W}_{0}(0). 
\end{equation*}
Since $\Tilde{W}_{0}(0) = -p<0$, it follows that $\Tilde{W}_{0}''(0)<0$. That is $\Tilde{W}_{0}(\cdot)$ has a local maximum at point $0$. In addition, since the solutions are continuous in $x$ and the initial condition $r$ (see Chapter V in \cite{hartman2002ordinary} or Chapter 2 in \cite{teschl2012ordinary}) and if $\Tilde{W}_r'(0)=r>0$, we have $\Tilde{W}_r(\cdot)$ is strictly increasing in an interval $[0, \delta_r)$, where $\delta_r >0$ and depends on $r$. Therefore, we can find $r_0>0$ such that $\Tilde{W}_{r_0}$ has a local maximum. Thus the comparison lemma (Lemma \ref{comparison lemma}) deduce that there is a $r_0>0$ such that for each $0<r\leq r_0$, we have $\Tilde{W}_r<0$ and it has a local maximum.

To see part (ii), we have observed that $W_r(x)$ coincides with $\Tilde{W}_r(x)$ on $[0, c_r)$ when $\delta>0$ is arbitrarily small. Therefore, the same conclusion in part (i) also holds for $W_r(x)$ for all $x\in[0, c_r)$. Since $\Tilde{W}_r(x)<0$ for all $0<r\leq r_0$ and all $x\in[0, c_r)$ and $\Tilde{F}(x)$ coincides with $F(x)$ on $(-\infty, -\delta]$, it suffices to consider $\Tilde{W}_r$ as a solution to \eqref{wrequation} when $\delta$ is arbitrarily small. The uniqueness implies that $W_r(x) = \Tilde{W}_r(x)<0$ for all $x\in[0, c_r)$ whenever $0<r\leq r_0$. This completes the proof. 
\hfill $\square$ \\

\begin{lemma}
Consider the sequence of solutions $\{\Tilde{W}_r(x)\}_{r\geq 0}$ to \eqref{wrequation with tilde F} for all $x\in[0, \infty)$ and $\{W_r(x)\}_{r\geq0}$ to \eqref{wrequation} for all $x\in[0, c_r)$, the following conclusions hold:
\begin{enumerate}[(i)]
    \item There exists $r_1>0$ large enough such that for all $r>r_1$, we have $\Tilde{W}_r(c_r)=0$.
    
    \item Consequently, we have $\lim\limits_{x\to c_r^-}W_r(x)=0$ for all $r>r_1$. 
\end{enumerate}
\label{large r case}
\end{lemma}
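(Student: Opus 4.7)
My plan is to reduce part (i) to showing that $\tau_r := \inf\{x \geq 0 : \Tilde{W}_r(x) = 0\}$ is finite whenever $r$ is sufficiently large. Once this is established, continuity forces $\Tilde{W}_r(\tau_r) = 0$, and the identification $W_r \equiv \Tilde{W}_r$ on $[0, c_r)$ for $\delta$ arbitrarily small (proved in the discussion preceding Lemma \ref{comparison lemma}) yields $\tau_r = c_r$, hence $\Tilde{W}_r(c_r) = 0$. Part (ii) is then immediate from the same identification together with the continuity of $\Tilde{W}_r$. The strategy is to exploit the fact that integrating \eqref{wrequation with tilde F} once produces a free term of size $\frac{\sigma^2}{2} r$, and to combine this with an elementary sign analysis to cheaply sandwich $\Tilde{W}_r$ between $-p$ and $0$ on the sub-interval where $\Tilde{W}_r$ is monotone.

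Fix $T > 0$ and suppose, for contradiction, that $\tau_r > T$, so that $\Tilde{W}_r < 0$ on $[0, T]$. Let $x_0 := \inf\{x \in (0, T] : \Tilde{W}_r'(x) = 0\}$, with the convention $x_0 = T$ when the set is empty. The three-case analysis preceding Lemma \ref{touch x axis tangentially}, evaluated on \eqref{wrequation with tilde F}, forces any zero $\xi$ of $\Tilde{W}_r'$ at which $\Tilde{W}_r(\xi) < 0$ to be a strict local maximum (Case 3). Since $\Tilde{W}_r'(0) = r > 0$ and $\Tilde{W}_r'$ is continuous, this yields $\Tilde{W}_r' > 0$ on $(0, x_0)$, so $\Tilde{W}_r$ is strictly increasing on $[0, x_0]$ and $\Tilde{W}_r(x) \in [-p, 0)$ for every $x \in [0, x_0]$.

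Integrating \eqref{wrequation with tilde F} once, using $\Tilde{W}_r(0) = -p$, $\Tilde{W}_r'(0) = r$, and an integration by parts on $\theta s\,\Tilde{W}_r'(s)$, produces
$$\frac{\sigma^2}{2} \Tilde{W}_r'(x) = \frac{\sigma^2}{2} r + \Tilde{F}(\Tilde{W}_r(x)) - \Tilde{F}(-p) + \theta x\,\Tilde{W}_r(x) + \alpha \int_0^x \Tilde{W}_r(s)\, ds.$$
Applying $\Tilde{F} \geq -C(0)$ together with $-p \leq \Tilde{W}_r(x) < 0$ on $[0, x_0]$ shows that each of the last four terms is bounded below by a constant, yielding $\Tilde{W}_r'(x) \geq r - C_T$ on $[0, x_0]$, where $C_T := \frac{2}{\sigma^2}\bigl[C(0) + |\Tilde{F}(-p)| + (\theta + \alpha)\,T p\bigr]$ is independent of $r$. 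Setting $r_1 := C_T + p/T$, any $r > r_1$ gives $\Tilde{W}_r'(x_0) \geq r - C_T > p/T > 0$, excluding $\Tilde{W}_r'(x_0) = 0$; hence $x_0 = T$, and integrating once more produces $\Tilde{W}_r(T) \geq -p + (r - C_T)\,T > 0$, contradicting $\Tilde{W}_r(T) < 0$. Therefore $\tau_r \leq T$, proving part (i); part (ii) follows as described above.

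The main obstacle I anticipate is recognizing that the monotonicity on $[0, x_0]$, forced purely by the Case 3 sign analysis, is what allows $\Tilde{W}_r$ to be sandwiched uniformly in $r$ by the crude envelope $[-p, 0)$ without invoking the comparison lemma; once that observation is in place, the linearly growing term $\frac{\sigma^2}{2} r$ in the integrated ODE carries the argument to completion.
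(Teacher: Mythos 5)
Your proof is correct and follows essentially the same route as the paper's: integrate \eqref{wrequation with tilde F} once so the initial slope contributes a free term $\tfrac{\sigma^2}{2}r$, bound the remaining terms using $\Tilde{F}\geq -C(0)$ and $\Tilde{W}_r\in[-p,0)$, and let the term linear in $r$ force a sign change. Your execution is in fact slightly tighter: you justify the sandwich $\Tilde{W}_r\in[-p,0)$ via monotonicity up to the first critical point $x_0$ (the paper uses $\Tilde{W}_r\geq -p$ implicitly), and your explicit threshold $r_1=C_T+p/T$ delivers the conclusion for all $r>r_1$ at once, without the paper's closing appeal to the comparison lemma.
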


\textbf{Proof.}
We need to show that it is possible for $\Tilde{W}_r(\cdot)$ to cross the $x$ axis in finite time for large $r$. It is enough to show $\Tilde{W}_r(x)>0$ for some $r$ and some $x$.
Suppose $\Tilde{W}_r(x)<0$ for all $r>0$ and $x$ in a closed interval $[0, a]$ with $a>0$ small enough. 
Consider \eqref{wr with integral} with a function extension $\Tilde{F}$ \eqref{Legendre transform in our case C} and after a simple algebraic manipulation, we obtain
\begin{equation*}
     \frac{\sigma^2}{2} \Tilde{W}_r'(x) = \Tilde{F}(\Tilde{W}_r(x)) + \theta x \Tilde{W}_r(x) + \alpha \int_0^x \Tilde{W}_r(s)ds+\alpha \Tilde{K}_r. 
\end{equation*}
Observe that $\Tilde{F}$ is bounded below by $-C(0)$ and $\alpha \Tilde{K}_r = \frac{\sigma^2}{2}r-\Tilde{F}(-p)$. Therefore, we obtain a lower bound of the right-hand side as the following:
\begin{equation*}
\begin{aligned}
    \frac{\sigma^2}{2} \Tilde{W}_r'(x) &= \Tilde{F}(\Tilde{W}_r(x)) + \theta x \Tilde{W}_r(x) + \alpha \int_0^x \Tilde{W}_r(s)ds+\alpha \Tilde{K}\\
    &\geq -C(0) -p\theta x -p\alpha x + \frac{\sigma^2}{2}r-\Tilde{F}(-p)\\
    &\geq -C(0) -p(\theta +\alpha)x + \frac{\sigma^2}{2}r, 
\end{aligned}
\end{equation*}
where $C(0)$ is a finite positive constant, and the parameters $p$, $\theta$, and $\alpha$ are independent of $r$. 
Notice that the third inequality is obtained by assuming $\Tilde{F}(-p)<0$. 
This is true when $-p \geq C'(u_0)$, where $u_0$ satisfies $u_0 C'(u_0) = C(u_0)$, since $F(y) > 0$ for $y < C'(u_0)$. 
When $-p < C'(u_0)$, the third inequality does not hold, and we end up with the second inequality. 
Since this additional constant will not affect our calculation, we omit this term for brevity. 
Integrating above inequality on $[0, s]$ for some small $s\in(0, a)$, we obtain
\begin{equation*}
    \Tilde{W}_r(s)\geq -p + (-\frac{2C(0)}{\sigma^2}+r)s -\frac{p(\theta+\alpha)}{\sigma^2}s^2.
\end{equation*} 
Observe that $\Tilde{W}_r$ depends on $\delta$ since it is a solution to \eqref{wrequation with tilde F} and $\Tilde{F}$ depends on $\delta$. However, the lower bound is a second-order polynomial in terms of $s$ and is independent of $\delta$.
Now letting $r$ tends to positive infinity, it is evident that $\lim\limits_{r\to\infty} \Tilde{W}_r(s) =\infty$ for each $\delta>0$ and some small $s\in(0, a)$, which contradicts our assumption. Therefore, there exists some $r_1>0$ large enough so that whenever $r>r_1$, $\Tilde{W}_{r}(x)=0$ for some $x\in[0, a]$ for all $\delta>0$. With the help of comparison lemma (Lemma \ref{comparison lemma}), this completes the proof of part (i). 

To see part (ii), since $W_r(x)$ agrees with $\Tilde{W}_r(x)$ for all $x\in\{x\in[0, c_r): \Tilde{W}_r(x)<-\delta\}$ and let $\delta$ be arbitrarily small, we can obtain the same inequality as in the previous arguments. Notice that even without the help of $\Tilde{F}(\cdot)$ arguments, we can still obtain an identical inequality as above due to the lower bound of $F(\cdot)$. Therefore, (ii) follows immediately from (i). 
\hfill $\square$ \\

Using Lemmas \ref{comparison lemma}-\ref{large r case}, one can obtain an overall solution profile of the second-order differential equation \eqref{wr with integral}. In Figure \ref{W_r with different r values}, we exhibit the sample graphs of $W_r$ with respect to different $r$ values, where $r_i$'s are in ascending order for $i = 0, 1, \cdots, 7$ and $r_3< r^* < r_4$. Moreover, $r^*$ is the one for which we are looking such that $W_{r^*}$ is the optimal solution to \eqref{wr with integral}. 
Here we assume $p = 1.1$, diffusion parameter $\sigma = 1$ and drift parameters $\theta = 0.5$. 
We also assume the cost function presented in Example \ref{example 1}. The corresponding Legendre transform has been computed in \eqref{eq: F example}. 
For the performance measure defined in \eqref{costfunctional}, we take the discounted parameter $\alpha = 0.5$. Moreover, we utilize the finite difference method and the Newton-Cotes formulate (Trapezoid rule) as an integral approximation of \eqref{wr with integral}. 
\begin{figure}[h!tb]
    \centering
    \includegraphics[width=0.7\linewidth]{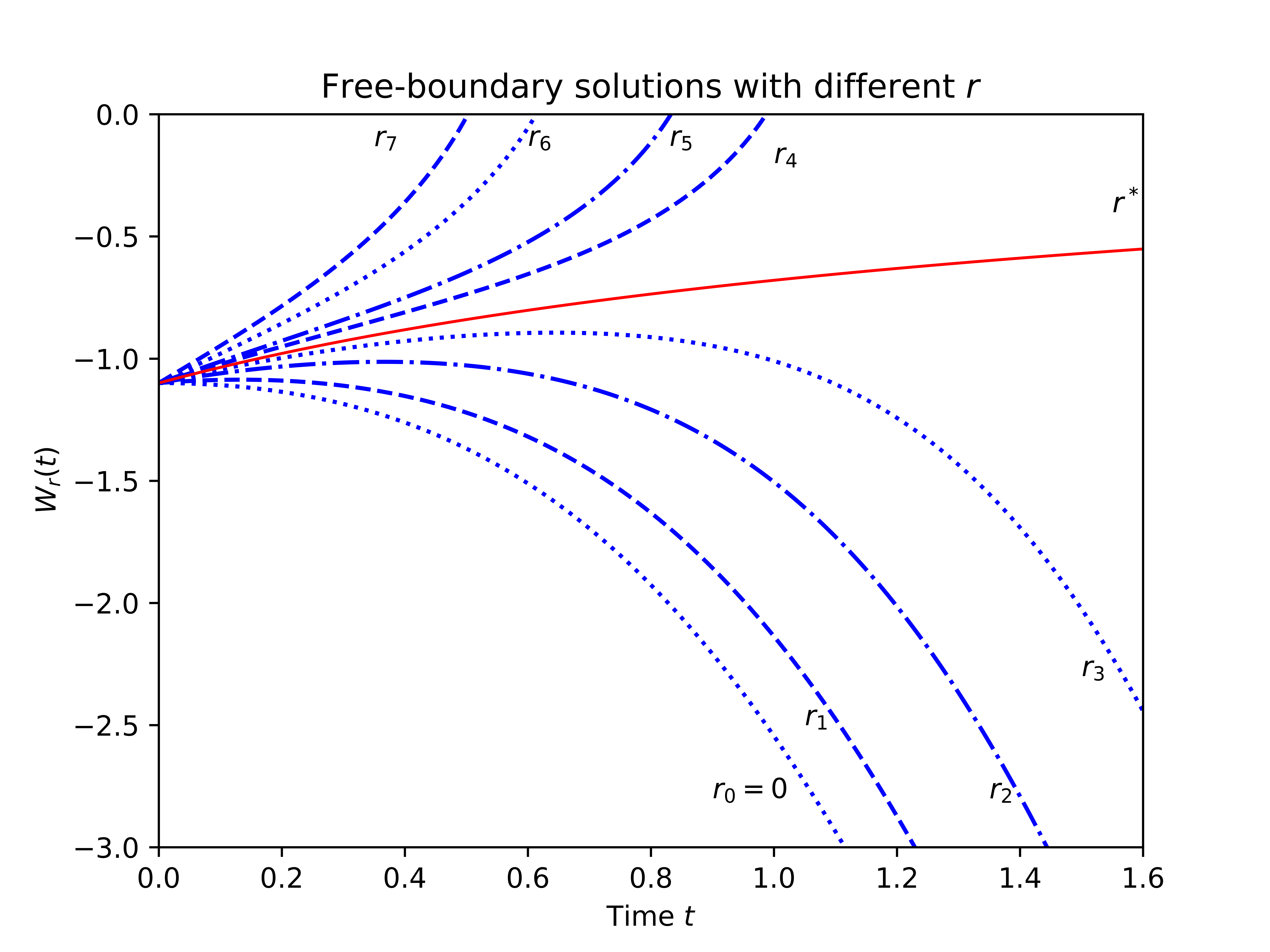}
    \caption{Illustrative diagram of solutions $W_r(x)$ for $x\in[0, \infty)$ with different values of $r$.}
    \label{W_r with different r values}
\end{figure}

\begin{proposition}
There is a $r^*>0$ such that the corresponding $Y_{r^*}(x)$ is bounded, strictly decreasing, and convex on $[0, \infty)$. 
Moreover, it has a bounded first derivative such that $-p\leq Y_{r^*}'(x)<0$ for all $x\in[0, \infty)$.  
\label{exist r^*}
\end{proposition}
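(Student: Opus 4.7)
The plan is to identify $r^*$ as the critical value separating the two regimes in Lemmas \ref{small r case} and \ref{large r case}. Specifically, set
\[
r^* := \inf\{r > 0 : \lim_{x \to c_r^-} W_r(x) = 0\},
\]
the infimum over those $r$ for which $W_r$ reaches the $x$-axis at a finite $c_r$. By Lemma \ref{large r case} this set is nonempty, and by Lemma \ref{small r case} it excludes a neighborhood of $0$, so $r^* > 0$. The comparison lemma (Lemma \ref{comparison lemma}) shows that once $W_{r_0}$ crosses $0$, every $W_r$ with $r > r_0$ crosses strictly earlier, so the set is an interval $(r^*, \infty)$ or $[r^*, \infty)$.

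The first substantive step is to show $W_{r^*}$ itself does \emph{not} reach $0$. Suppose for contradiction $W_{r^*}$ reaches $0$ at a finite point $c^*$. Lemma \ref{touch x axis tangentially} gives $\lim_{x\to c^{*-}} W_{r^*}'(x) > 0$, so the crossing is transversal. Standard continuous dependence of ODE solutions on initial data, applied to the Lipschitz equation \eqref{wrequation with tilde F}, yields uniform closeness of $\tilde{W}_r$ to $\tilde{W}_{r^*}$ on $[0, c^* + 1]$ for $r$ near $r^*$; transversality of the crossing then forces $W_r$ to reach $0$ for some $r$ slightly \emph{below} $r^*$, contradicting the definition of $r^*$. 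Hence $W_{r^*}$ stays strictly negative on its maximal interval of existence.

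The second step is to show $W_{r^*}$ is globally defined and monotone increasing. By the trichotomy recorded just before Lemma \ref{touch x axis tangentially}, any critical point of $W_{r^*}$ must be a strict local maximum (since $W_{r^*} < 0$ rules out the other two cases), and at a hypothetical local minimum $\xi'$ the equation \eqref{wrequation} gives $W_{r^*}''(\xi') = \tfrac{2(\theta+\alpha)}{\sigma^2} W_{r^*}(\xi') < 0$, a contradiction. Thus if $W_{r^*}$ had a local maximum at some $\xi$, it would decrease monotonically on $[\xi,\infty)$ and never recross $0$. But continuous dependence would transport this ``interior maximum followed by monotone decay'' profile to some $r$ slightly larger than $r^*$, whose $W_r$ would then also not reach $0$, contradicting the definition of $r^*$. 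Consequently $W_{r^*}'$ never vanishes on $(0,\infty)$; combined with $W_{r^*}'(0) = r^* > 0$ and continuity, this gives $W_{r^*}' > 0$ throughout, confines $W_{r^*}$ to $[-p, 0)$, and yields global existence on $[0, \infty)$ from local Lipschitzness of the coefficients in \eqref{wrequation}.

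Finally, the properties of $Y_{r^*}$ follow at once from $Y_{r^*}' = W_{r^*} \in [-p, 0)$ with $Y_{r^*}'(0) = -p$: strict decrease, the bound $-p \leq Y_{r^*}' < 0$, and strict convexity from $Y_{r^*}'' = W_{r^*}' > 0$. For non-negativity, monotonicity and boundedness of $W_{r^*}$ produce a limit $L := \lim_{x\to\infty} W_{r^*}(x) \in [-p, 0]$; substituting back into \eqref{wr with integral} and comparing orders as $x \to \infty$ forces $L = 0$ together with $\int_0^\infty W_{r^*}(s)\,ds = -K_{r^*}$, so $Y_{r^*}(x) \downarrow 0$ as $x \to \infty$ and therefore $Y_{r^*} \geq 0$ throughout. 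I expect this last asymptotic analysis to be the main obstacle, because continuous-dependence arguments do not deliver behavior at infinity; one must exploit the unbounded linear drift $\theta x W_{r^*}(x)$ in \eqref{wr with integral} to exclude $L < 0$, since in that case the right-hand side diverges to $-\infty$ while $\tfrac{\sigma^2}{2} W_{r^*}'$ must remain bounded by monotonicity and boundedness of $W_{r^*}$.
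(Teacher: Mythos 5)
Your proposal follows essentially the same threshold strategy as the paper, though you phrase the threshold as $r^* = \inf\{r : W_r \text{ reaches } 0\}$ while the paper takes $r^* = \sup\{r : W_r \text{ has a local maximum}\}$. These are the same number: after a negative local maximum, $W_r$ must decrease monotonically (any subsequent critical point would be a positive local minimum, impossible while $W_r < 0$), so ``has a local maximum'' and ``reaches zero'' are complementary events, and the comparison lemma shows each set is an interval. Your Steps 1--2 (rule out a zero of $W_{r^*}$ via Lemma~\ref{touch x axis tangentially} plus continuous dependence; rule out an interior local maximum via continuous dependence plus the definition of $r^*$) are the same arguments the paper makes, just grouped differently.

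The genuine divergence is in the non-negativity of $Y_{r^*}$. The paper reads it off in one line from \eqref{wr with integral}: rearranging gives $\alpha Y_{r^*}(x) = \tfrac{\sigma^2}{2}W_{r^*}'(x) - F(W_{r^*}(x)) - \theta x W_{r^*}(x)$, and each term on the right is nonnegative (strictly positive for the first), since $W_{r^*}' > 0$, $W_{r^*} < 0$, and $F \leq 0$ on $(-\infty,0]$. You instead try to show $Y_{r^*}(x) \downarrow 0$. That route works but is more delicate than you indicate: you assert $\int_0^\infty W_{r^*} = -K_{r^*}$ by ``comparing orders,'' but to close that you must first show $\theta x W_{r^*}(x) \to 0$, which requires combining the monotonicity of $W_{r^*}$ with the convergence of $\int W_{r^*}$ (the standard trick $\tfrac{x}{2}|W_{r^*}(x)| \leq \int_{x/2}^x |W_{r^*}|$), and only then can you pass to the limit in \eqref{wr with integral} to force $Y_\infty = 0$ by exclusion. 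The convergence of $\int W_{r^*}$ itself must be argued (e.g.\ by contradiction: if $Y_{r^*} \to -\infty$ then $W_{r^*}' \to -\infty$). None of this is wrong, but it is an unnecessary detour; note that the paper's one-line algebraic observation also immediately gives you $Y_{r^*} > 0$ without any asymptotics, after which the limit $Y_{r^*}(x) \to 0$ (if wanted) follows as a corollary rather than a prerequisite.
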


\textbf{Proof.}
Let 
\begin{equation}
    r^* = \sup\{r\geq 0: W_r\ \text{has a local maximum}\},
    \label{r^*}
\end{equation}
which is well defined since the set $\{r\geq 0: W_r\ \text{has a local maximum}\}$ is nonempty by Lemma \ref{small r case} and $r^*$ is finite by Lemma \ref{large r case}. First, we claim that $W_{r^*}(x)$ does not have a negative local maximum for all $x\in[0, c_{r^*})$. Suppose there is a point $a\in[0, c_{r^*})$ such that $W_{r^*}'(a)=0$ and $W_{r^*}(a)<0$, i.e., $W_{r^*}$ has a strict negative local maximum at $a$. 
Since the solutions $\{W_r\}_{r\geq r^*}$ to \eqref{wr with integral} is continuous with respect to the initial condition $W_r'(0) = r$ (see Chapter V in \cite{hartman2002ordinary} or Chapter 2 in \cite{teschl2012ordinary}) and with the help of comparison lemma (Lemma \ref{comparison lemma}), there exists a $\delta_0>0$ such that whenever $0<r-r^*<\delta_0$, $W_r(x)$ is close to $W_{r^*}(x)$ for all $x\in[0, c_{r^*})$. 
As a consequence, $W_r(x)$ has a strictly negative local maximum as well for some $x$ in the neighborhood of $a$, which contradicts the definition of $r^*$. Therefore, $W_{r^*}$ cannot have a negative local maximum. 

Second, we claim that $W_{r^*}$ does not have zero local maximum. Suppose it has a zero local maximum, i.e., $W_{r^*}(c_{r^*}) = 0$ and $W_{r^*}'(c_{r^*})=0$. However, Lemma \ref{touch x axis tangentially} suggests that $W_{r^*}$ cannot reach $x$ axis tangentially. Thus, this excludes the zero local maximum case. Hence, $W_{r^*}(x)$ cannot have a non-positive local maximum. This yields $W_{r^*}'(x)>0$ for all $x\in[0, c_{r^*})$. Since we mainly consider $W_{r^*}(x)$ for $x\in[0, c_{r^*})$ and together with the definition of $c_{r^*}$, we obtain $W_{r^*}(x)<0$ for all $x\in[0, c_{r^*})$. 

Third, we intend to exclude the case of $W_{r^*}$ having an endpoint maximum if $c_{r^*}$ is a finite number. We suppose $\lim\limits_{x\to c_{r^*}^-} W_{r^*}(x)=0$ and $\lim\limits_{x\to c_{r^*}^-}W_{r^*}'(x)>0$. In this case, $W_{r^*}$ is increasing on the interval $[0, c_{r^*})$ and attains maximum value at $c_{r^*}$. Since the continuity of $W_r$ with respect to the initial condition $r$, we can find $\delta_0>0$ so that whenever $-\delta_0<r-r^*<0$, we have $W_r(x) = 0$ for some $x$ in the neighborhood of $c_{r^*}$ at where $W_r$ intersects with $x$ axis with a positive slope. However, this contradicts the fact that $r^*$ is the least upper bound of the set $\{r\geq0:W_r\ \text{has a local maximum}\}$ as defined in \eqref{r^*} since $r<r^*$. Therefore, we conclude that $c_{r^*}=+\infty$ and $W_{r^*}'(x)>0$ and $W_{r^*}(x)<0$ for all $x\in[0, \infty)$ by above discussions. 

Further, we would like to see the long-run behavior of $W_{r^*}(x)$ as $x\to +\infty$. Suppose $\lim\limits_{x\to +\infty} W_{r^*}(x)$ exists and is equal to $M > -p$, which is a negative finite number. Using the differential equation \eqref{wr with integral}, we can obtain the following inequality:
\begin{equation*}
\begin{aligned}
    \frac{\sigma^2}{2}W_{r^*}'(x) &= F(W_{r^*}(x)) + \theta x W_{r^*}(x) + \alpha \int_0^x W_{r^*}(s)ds + \alpha K_{r^*} \\
    &<\theta x W_{r^*}(x) + \alpha K_{r^*}.
\end{aligned}
\end{equation*}
The last inequality holds since $F$ is non-positive and $W_{r^*}(x)<0$ for all $x\in[0, \infty)$. 
Notice that this holds if $-p \geq C'(u_0)$, where $u_0C'(u_0) = C(u_0)$. 
Let $x\to +\infty$, since the right hand side goes to $-\infty$, we have $\lim\limits_{x\to +\infty} W_{r^*}'(x) = -\infty<0$. This contradicts our conclusion that $W_{r^*}'(x)>0$ for all $x\in[0, \infty)$. 
If $-p < C'(u_0)$, then, we have $\frac{\sigma^2}{2} W_{r^*}'(x) < F(-p) + \theta x W_{r^*}(x) + \alpha K_{r^*}$, where $F(-p) > 0$. We deduce a contradiction following the same manner. 
Therefore, we have $\lim\limits_{x\to +\infty}W_{r^*}(x)=0$. 

With all the tools in hand, we are ready to see the properties of the corresponding $Y_{r^*}(\cdot)$. Since $W_{r^*}(\cdot)=Y_{r^*}'(\cdot)$ and together with $W_{r^*}'(x)>0$ and $W_{r^*}(x)<0$ for all $x\in[0, \infty)$ as proved above, we conclude that $Y_{r^*}(x)$ is strictly decreasing, convex on $[0, \infty)$, and $-p\leq Y_{r^*}'(x)<0$ for all $x\in[0, \infty)$. 
To see boundedness, we consider the previous reduction of order $Y_{r^*}(x) = \int_0^x W_{r^*} (s)\, ds + \alpha K_{r^*}$. 
Since $Y_{r^*}' < 0$ and $Y_{r^*}(0) = \alpha K_{r^*}$, it suffices to establish a lower bound by considering $\lim\limits_{x\to\infty} Y_{r^*}(x)$. Due to the structure of $W_{r^*}$ in the previous results, it is straightforward that $\lim\limits_{x\to\infty}\int_0^x W_{r^*} > -\infty$. Therefore, $Y_{r^*}$ is bounded. 
This completes the proof. 
\hfill $\square$ \\

Now, we are ready to prove Theorem \ref{A solution to the HJB equation}. 

\textbf{Proof of Theorem \ref{A solution to the HJB equation}.}
For all $x\in[0, \infty)$, let $Q(x) = Y_{r^*}(x)$. With the help of Proposition \ref{exist r^*}, it is straightforward that $Q(\cdot)$ satisfies all the assertions. 
To see boundedness, we consider the previous reduction of order $Y_{r^*}(x) = \int_0^x W_{r^*} (s)\, ds + \alpha K_{r^*}$. 
Since $Q' = Y_{r^*}' < 0$ and $Q(0) = Y_{r^*}(0) = \alpha K_{r^*}$, it suffices to establish a lower bound by considering $\lim\limits_{x\to\infty} Y_{r^*}(x)$. Due to the structure of $W_{r^*}$ in the previous results, it is straightforward that $\lim\limits_{x\to\infty}\int_0^x W_{r^*} > -\infty$. Therefore, $Q$ is bounded. 
This completes the proof. 
\hfill $\square$ \\


\section{Existence of a Unique Solution to SDE \eqref{optimal SDE}}
\label{Appendix C}

To show the existence of a unique weak solution to the reflected stochastic differential equations, it is conventional to employ the Skorokhod map in conjunction with the contraction mapping theorem on an appropriate Banach space. However, we intend to demonstrate an alternative approach with the help of a comparison  result of the stochastic differential equations. 

We consider the stochastic differential equation of the admissible control system \eqref{optimal X}, which is relabeled as \eqref{relabeled optimal SDE} in this section for convenience, and it is given by
\begin{equation}
    X^*(t) = x + \sigma W(t) - \int_0^t F'(Q'(X^*(s)))ds - \theta\int_0^t X^*(s)ds + L(t), 
    \label{relabeled optimal SDE}
\end{equation}
where $\{W(t): t\geq 0\}$ is a standard Brownian motion in a probability space $(\Omega, \mathcal{F}, P)$, $F(\cdot)$ is the Legendre transform as derived in \eqref{Legendre transform in our case C}, and $Q(\cdot)$ is a smooth solution obtained in Theorem \ref{A solution to the HJB equation}. Moreover, $L(\cdot)$ is the local time process satisfies \eqref{local time property}. Our objective of this discussion is to show that the stochastic differential equation \eqref{relabeled optimal SDE} admits a strong solution for all $t\geq0$ and the explosion time is equal to infinity with probability one. 

We observe that $F'(x)\equiv0$ for all $x\leq C'(0)$ and $F'(x)$ is Lipschitz continuous for all $x\leq -\delta$. Define $\tau_{\delta} = \inf\{t\geq0: Q'(X^*(t))=-\delta\}$ to be the first time $Q'(X^*(t))$ reaches $-\delta$ for all $\delta>0$. For a fixed $\delta>0$, we will show 
\begin{equation}
    X^*(t)\leq X(t), 
    \label{eq: compartion X^* < X}
\end{equation}
for all $t\in[0, \tau_{\delta}]$, where $X(t)$ solves a stochastic differential equation without the non-linear term: 
\begin{equation}
    X(t) = x + \sigma W(t) - \int_0^t \theta X(s)ds + L(t), 
    \label{X}
\end{equation}
for all $t\geq0$. Then we let $\delta$ goes to zero such that $\tau_{\delta}$ increases to $\tau_0$, which can be interpreted as the explosion time of \eqref{relabeled optimal SDE}. If we assume $P[\tau_0<\infty]>0$, then we have $\lim\limits_{t\to\tau_0^-}X^*(t)=+\infty$ on the set $[\tau_0<\infty]$, which contradicts above comparison result. Then we can conclude that the explosion time is equal to infinity with probability one once we show the above comparison result \eqref{eq: compartion X^* < X}. 

Let $\Gamma_0:  D[0, \infty)\mapsto D[0, \infty)$ be the Skorokhod map (see (1.1) in \cite{kruk2007explicit})
\begin{equation*}
    \Gamma_0(\phi)(t) = \phi(t) + \sup_{s\in[0, t]} [-\phi(s)]^+, 
\end{equation*}
where $\phi(t)\in D[0, \infty)$ and for all $t\geq0$. Consider the following stochastic differential equations on a probability space $(\Omega, \mathcal{F}, P)$:
\begin{equation}
    Z^*(t) = x + \sigma W(t) - \int_0^t \left(F'(Q'(\Gamma_0(Z^*)(s)))+ \theta \Gamma_0(Z^*)(s)\right)ds,
    \label{star Z}
\end{equation}
for all $t\in[0, \tau_{\delta}]$, and 
\begin{equation}
    Z(t) = x + \sigma W(t) - \int_0^t \theta \Gamma_0(Z)(s)ds, 
    \label{Z}
\end{equation}
for all $t\geq0$, where the initial state $x$, $W(\cdot)$, $F(\cdot)$, and $Q(\cdot)$ are described in \eqref{optimal X}. 
Since $F'$ is Lipschitz continuous up to $-\delta$, $Q\in C^2[0, \infty)$ and the Skorokhod map has Lipschitz property, one can show that the drift term is \textit{functional Lipschitz} (see Page 256 of Chapter V in \cite{protter2005stochastic}) for all $t\in[0, \tau_{\delta}]$. Consequently, \eqref{star Z} has a unique weak solution for all $t\in[0, \tau_{\delta}]$ by applying the contraction mapping theorem on an appropriate Banach space (see Theorem 7 of Chapter V in \cite{protter2005stochastic}). 
Similarly, one can show that the drift term of \eqref{Z} is also functional Lipschitz for all $t\geq0$. 
Since it suffices to consider the case of strictly positive $F'$, we can obtain the inequality
\begin{equation}
    -\left(F'(Q'(\Gamma_0(Z)(t)))+ \theta \Gamma_0(Z)(t)\right)<-\theta \Gamma_0(Z)(t), 
\end{equation}
for any process $Z(t)$ and for all $t\in[0, \tau_{\delta}]$. Therefore, by the comparison theorem of solutions to stochastic differential equations (see Theroem 54 of Chapter V in \cite{protter2005stochastic}), we conclude that $P[\exists\ t\in[0, \tau_{\delta}]: Z(t)\leq Z^*(t)] = 0$. 

Consider the Skorokhod problem for $Z^*(t)$ for all $t\in[0, \tau_{\delta}]$ and $Z(t)$ for all $t\geq 0$, respectively. We have that $\Gamma_0(Z^*)(t)=X^*(t)$ for all $t\in[0, \tau_{\delta}]$ and $\Gamma_0(Z)(t)=X(t)$ for all $t\geq0$. Moreover, we have the following stochastic differential equations:
\begin{equation}
    X^*(t) = x + \sigma W(t) - \int_0^t \left(F'(Q'(X^*(s)))+ \theta X^*(s)\right)ds + L^*(t),
    \label{Tilde X}
\end{equation}
for all $t\in[0, \tau_{\delta}]$, and
\begin{equation*}
    X(t) = x + \sigma W(t) - \int_0^t \theta X(s)ds + L(t), 
\end{equation*}
as introduced in \eqref{X}, for all $t\geq0$, where $L^*(\cdot)$ and $L(\cdot)$ are the local time processes satisfy \eqref{local time property}. 
To establish $0\leq X^*(t)\leq X(t)$ for all $t\in[0, \tau_{\delta}]$ from $Z^*(t)<Z(t)$, we consider two new processes given by $Y^*(t) = e^{\theta t}X^*(t)$ for all $t\in[0, \tau_{\delta}]$ and $Y(t) = e^{\theta t} X(t)$ for all $t\geq0$, from where we can derive the following equations:
\begin{equation}
    dY^*(t) = \sigma  e^{\theta t}dW(t) -  e^{\theta t}F'(Q'(X^*(t)))dt +  e^{\theta t}dL^*(t), 
\end{equation}
for all $t\in[0, \tau_{\delta}]$, and
\begin{equation}
    dY(t) = \sigma e^{\theta t} dW(t) + e^{\theta t}dL(t),
\end{equation}
for all $t\geq0$. 
Observe that these equations can be deduced from the Skorokhod decomposition of $M^*(t)$ and $M(t)$, where $dM^*(t) =\sigma  e^{\theta t}dW(t) -  e^{\theta t}F'(Q'(X^*(t)))dt$ for all $t\in[0, \tau_{\delta}]$ and $dM(t) =\sigma e^{\theta t} dW(t)$ for all $t\geq0$. Since $M(t) - M^*(t) = \int_0^t e^{\theta s}F'(Q'(X^*(s)))ds$ for all $t\in[0, \tau_{\delta}]$, which is increasing with respect to $t$, we can apply the comparison theorem for the Skorokhod problem (see Theorem 1.7 in \cite{kruk2007explicit}) to obtain $Y^*(t)\leq Y(t)$ for all $t\in[0, \tau_{\delta}]$. This implies $X^*(t) \leq X(t)$ for all $t\in[0, \tau_{\delta}]$ as described in \eqref{eq: compartion X^* < X}. 
Moreover, we observe that $X(t)<+\infty$ for all $t\geq0$.
Consequently, we have $0\leq X^*(t)<+\infty$ for all $t\in[0, \tau_{\delta}]$. 
Since the solution to \eqref{X} exits for all $t\geq0$, a finite explosion time of \eqref{Tilde X} contradicts the comparison result \eqref{eq: compartion X^* < X}. 
Since we aim to indicate the solution profile of $X^*$ at the explosion time $\tau_0$ as previously mentioned, we intend to consider $\tau_{\delta}$ for an arbitrarily small $\delta > 0$. 
Hence, it is straightforward to conclude that the comparison result \eqref{eq: compartion X^* < X} holds for all $t\geq0$. Consequently, the solution to the stochastic differential equation \eqref{relabeled optimal SDE}, i.e., \eqref{optimal X}, is well-defined for all $t\geq0$.

\end{appendices}


\section*{Acknowledgement}
The author would like to acknowledge his advisor, Ananda Weerasinghe, for his guidance, patience, enthusiasm, and inspiration throughout the research and the writing of the paper.

 \bibliographystyle{elsarticle-num} 
 \bibliography{cas-refs}





\end{document}